\def\journal@id{~}
\def\journal@name{~}
\def\journal@url{~}
\newtheorem{theorem}{Theorem}[section]
\newtheorem{cor}[theorem]{Corollary}
\newtheorem{lem}[theorem]{Lemma}
\newtheorem{prop}[theorem]{Proposition}
\theoremstyle{definition}
\newtheorem{defn}[theorem]{Definition}
\theoremstyle{remark}
\newtheorem{remark}[theorem]{Remark}
\numberwithin{equation}{section}
\newcommand{\outint}{\int^*\!}
\newcommand{\card}{\mathop{\mathrm{card}}}
\newcommand{\bbS}{\mathbb{S}}
\newcommand{\bbX}{\mathbb{X}}
\newcommand{\bbY}{\mathbb{Y}}
\newcommand{\vvverto}{\mathopen{|\hspace{-0.12em}|\hspace{-0.12em}|}}
\newcommand{\vvvertc}{\mathclose{|\hspace{-0.12em}|\hspace{-0.12em}|}}
\newcommand{\tnorm}[1]{{\vvverto #1 \vvvertc}}
\begin{document}

\begin{frontmatter}

\title{Comparison Theorems for Gibbs Measures\protect\thanksref{T1}}
\runtitle{Comparison Theorems for Gibbs Measures}
\thankstext{T1}{Supported in part
by NSF grants DMS-1005575 and CAREER-DMS-1148711.}

\begin{aug}
\author{\fnms{Patrick} 
\snm{Rebeschini}\ead[label=e2]{prebesch@princeton.edu}}
\and
\author{\fnms{Ramon} \snm{van Handel}\ead[label=e3]{rvan@princeton.edu}}
\runauthor{Patrick Rebeschini and Ramon van Handel}
\affiliation{Princeton University}
\address{Sherrerd Hall\\
Princeton University \\
Princeton, NJ 08544 \\
USA \\ \printead{e2} \\ \printead{e3}}
\end{aug}

\begin{abstract} 
The Dobrushin comparison theorem is a powerful tool to bound the difference 
between the marginals of high-dimensional probability distributions in terms 
of their local specifications.  Originally introduced to prove uniqueness 
and decay of correlations of Gibbs measures, it has been widely used in 
statistical mechanics as well as in the analysis of algorithms on random 
fields and interacting Markov chains. However, the classical comparison 
theorem requires validity of the Dobrushin uniqueness criterion, essentially 
restricting its applicability in most models to a small subset of the 
natural parameter space.  In this paper we develop generalized Dobrushin 
comparison theorems in terms of influences between blocks of sites, in the 
spirit of Dobrushin-Shlosman and Weitz, that substantially extend the range 
of applicability of the classical comparison theorem.  Our proofs are based 
on the analysis of an associated family of Markov chains.  We develop in 
detail an application of our main results to the analysis of sequential 
Monte Carlo algorithms for filtering in high dimension.
\end{abstract}

\begin{keyword}
\kwd{Dobrushin comparison theorem}
\kwd{Gibbs measures}
\kwd{filtering in high dimension}
\end{keyword}

\end{frontmatter}

\setcounter{tocdepth}{2}
{\small\tableofcontents}

\section{Introduction}

The canonical description of a high-dimensional random system is provided 
by specifying a probability measure $\rho$ on a (possibly infinite) 
product space $\bbS=\prod_{i\in I}\bbS^i$: each site $i\in I$ represents a 
single degree of freedom, or dimension, of the model.  When $I$ is 
defined as the set of vertices of a graph, the measure $\rho$ defines a 
graphical model or a random field.  Models of this type are ubiquitous in 
statistical mechanics, combinatorics, computer science, statistics, and in 
many other areas of science and engineering.

Let $\rho$ and $\tilde\rho$ be two such models that are defined on the 
same space $\bbS$.  We would like to address the following basic question: 
when is $\tilde\rho$ a good approximation of $\rho$?  Such questions arise 
at a basic level not only in understanding the properties of random 
systems themselves, but also in the analysis of the algorithms that are 
used to investigate and approximate these systems.  Of course, probability 
theory provides numerous methods to evaluate the difference between 
arbitrary probability measures, but the high-dimensional setting brings 
some specific challenges: any approximation of practical utility in high 
dimension must yield error bounds that do not grow, or at least grow 
sufficiently slowly, in the model dimension.  We therefore seek 
quantitative methods that allow to establish dimension-free bounds on 
high-dimensional probability distributions.

A general method to address precisely this problem was developed by 
Dobrushin \cite{Dob70} in the context of statistical mechanics.  In the 
approach pioneered by Dobrushin, Lanford, and Ruelle, an 
infinite-dimensional system of interacting particles is defined by its 
\emph{local} description: for finite sets of sites $J\subset I$, the 
conditional distribution $\rho(dx^J|x^{I\backslash J})$ of the 
configuration in $J$ is specified given that the particles outside $J$ are 
frozen in a fixed configuration.  This local description is a direct 
consequence of the physical parameters of the problem.  The model $\rho$ 
is then defined as a probability measure (called a Gibbs measure) that is 
compatible with the given system of local conditional distributions; see 
section \ref{sec:setting}.  This setting gives rise to many classical 
questions in statistical mechanics \cite{Geo11,Sim93}; for example, the 
Gibbs measure may or may not be unique, reflecting the presence of a phase 
transition (akin to the transition from water to ice at the freezing 
point).  

The Dobrushin comparison theorem \cite[Theorem 3]{Dob70} provides a 
powerful tool to obtain dimension-free estimates on the difference 
between the marginals of Gibbs measures $\rho$ and $\tilde\rho$ in terms 
of the single site conditional distributions 
$\rho(dx^j|x^{I\backslash\{j\}})$ and 
$\tilde\rho(dx^j|x^{I\backslash\{j\}})$.  In its simplified form due to 
F\"ollmer \cite{Fol82}, this result has become standard textbook material, 
cf.\ \cite[Theorem 8.20]{Geo11}, \cite[Theorem V.2.2]{Sim93}. It is widely 
used to establish numerous properties of Gibbs measures, including 
uniqueness, decay of correlations, global Markov properties, and 
analyticity \cite{Geo11,Sim93,Fol88}, as well as functional inequalities 
and concentration of measure properties \cite{GZ03,Kul03,Wu06}, and has 
similarly proved to be useful in the analysis of algorithms on random 
fields and interacting Markov chains \cite{You89,Tat03,DSS12,RvH13}.

Despite this broad array of applications, the range of applicability of 
the Dobrushin comparison theorem proves to be somewhat limited.  This can 
already be seen in the easiest qualitative consequence of this result: the 
comparison theorem implies uniqueness of the Gibbs measure under the 
well-known Dobrushin uniqueness criterion \cite{Dob70}. Unfortunately, 
this criterion is restrictive: even in models where uniqueness can be 
established by explicit computation, the Dobrushin uniqueness criterion 
holds only in a small subset of the natural parameter space (see, e.g., 
\cite{Wei05} for examples).  This suggests that the Dobrushin comparison 
theorem is a rather blunt tool. On the other hand, it is also known that 
the Dobrushin uniqueness criterion can be substantially improved: this was 
accomplished in Dobrushin and Shlosman \cite{DS85} by considering a local 
description in terms of larger blocks $\rho(dx^J|x^{I\backslash J})$ 
instead of the single site specification 
$\rho(dx^j|x^{I\backslash\{j\}})$.  In this manner, it is possible in many 
cases to capture a large part of or even the entire uniqueness region. The 
uniqueness results of Dobrushin and Shlosman were further generalized by 
Weitz \cite{Wei05}, who developed remarkably general combinatorial 
criteria for uniqueness.  However, while the proofs of Dobrushin-Shlosman 
and Weitz also provide some information on decay of correlations, they do 
not provide an analogue of the powerful general-purpose machinery that the 
Dobrushin comparison theorem yields in its more restrictive setting.

The aim of the present paper is to fill this gap. Our main results 
(Theorem \ref{thm:main} and Theorem \ref{thm:oneside}) provide a direct 
generalization of the Dobrushin comparison theorem to the much more 
general setting considered by Weitz \cite{Wei05}, substantially extending 
the range of applicability of the classical comparison theorem.  While the 
classical comparison theorem is an immediate consequence of our main 
result (Corollary \ref{cor:follmer}), the classical proof that is based on 
the ``method of estimates'' does not appear to extend easily beyond the 
single site setting.  We therefore develop a different, though certainly 
related, method of proof that systematically exploits the connection of 
Markov chains.  In particular, our main results are derived from a more 
general comparison theorem for Markov chains that is applied to a suitably 
defined family of Gibbs samplers, cf.\ section \ref{sec:proofs} below.

Our original motivation for developing the generalized comparison theorems 
of this paper was the investigation of algorithms for filtering in high 
dimension.  Filtering---the computation of the conditional distributions 
of a hidden Markov process given observed data---is a problem that arises 
in a wide array of applications in science and engineering.  Modern 
filtering algorithms utilize sequential Monte Carlo methods to efficiently 
approximate the conditional distributions \cite{CMR05}.  Unfortunately, 
such algorithms suffer heavily from the curse of dimensionality, making 
them largely useless in complex data assimilation problems that arise in 
high-dimensional applications such as weather forecasting (the 
state-of-the-art in such applications is still dominated by \emph{ad-hoc} 
methods).  Motivated by such problems, we have begun to investigate in 
\cite{RvH13} a class of regularized filtering algorithms that can, in 
principle, exhibit dimension-free performance in models that possess decay 
of correlations.  For the simplest possible algorithm of this type, 
dimension-free error bounds are proved in \cite{RvH13} by systematic 
application of the Dobrushin comparison theorem.  

In order to ensure decay of correlations, the result of \cite{RvH13} 
imposes a weak interactions assumption that is dictated by the Dobrushin 
comparison theorem.  As will be explained in section \ref{sec:appl}, 
however, this assumption is unsatisfactory already at the 
\emph{qualitative} level: it limits not only the spatial interactions (as 
is needed to ensure decay of correlations) but also the dynamics in time. 
Overcoming this unnatural restriction requires a generalized version of 
the comparison theorem, which provided the motivation for our main 
results.  As an illustration of our main results, and as a problem of 
interest in its own right, the application to filtering algorithms will be 
developed in detail in section \ref{sec:appl}.

The remainder of this paper is organized as follows.  Section 
\ref{sec:mainres} introduces the basic setup and notation to be used 
throughout the paper, and states our main results.  While the comparison 
theorem, being quantitative in nature, is already of significant interest 
in the finite setting $\card I<\infty$ (unlike the qualitative uniqueness 
questions that are primarily of interest when $\card I=\infty$), we will 
develop our main results in a general setting that admits even 
infinite-range interactions. The proofs of the main results are given in 
section \ref{sec:proofs}.  The application to filtering algorithms is 
finally developed in section \ref{sec:appl}.

\section{Main results}
\label{sec:mainres}

\subsection{Setting and notation}
\label{sec:setting}

We begin by introducing the basic setting that 
will be used throughout this section.  

\subsubsection*{Sites and configurations}

Let $I$ be a finite or countably infinite set of \emph{sites}.  Each 
subset $J\subseteq I$ is called a \emph{region}; the set of finite
regions will be denoted as
$$
	\mathcal{I} := \{J\subseteq I:\card J<\infty\}.
$$
To each site $i\in I$ is associated a measurable space $\bbS^i$, the 
\emph{local state space}.  A \emph{configuration} is an assignment 
$x_i\in\bbS^i$ to each site $i\in I$.  The set of all configurations 
$\bbS$, and the set $\bbS^J$ of configurations in a given region 
$J\subseteq I$, are defined as
$$
	\bbS := \prod_{i\in I}\bbS^i,\qquad\qquad
	\bbS^J := \prod_{i\in J}\bbS^i.
$$
For $x=(x_i)_{i\in I}\in\bbS$, we denote by $x^J:=(x_i)_{i\in J}\in\bbS^J$ 
the natural projection on $\bbS^J$. When $J\cap K=\varnothing$, we define 
$z=x^Jy^K\in\bbS^{J\cup K}$ such that $z^J=x^J$ and $z^K=y^K$.

\subsubsection{Local functions}

A function $f:\bbS\to\mathbb{R}$ is said to be \emph{$J$-local} 
if $f(x)=f(z)$ whenever $x^J=z^J$, that is, if $f(x)$ depends on $x^J$ 
only.  The function $f$ is said to be \emph{local} if it is $J$-local for
some finite region $J\in\mathcal{I}$.  When $I$ is a finite set, every 
function is local.  When $I$ is infinite, however, we will frequently 
restrict attention to local functions.  More generally, we will consider 
a class of ``nearly'' local functions to be defined presently.

Given any function $f:\bbS\to\mathbb{R}$, let us define for 
$J\in\mathcal{I}$ and $x\in\bbS$ the $J$-local function
$$
	f_x^J(z) := f(z^Jx^{I\backslash J}).
$$
Then $f$ is called 
\emph{quasilocal} if it can be approximated pointwise by the local 
functions $f_x^J$:
$$
	\lim_{J\in\mathcal{I}}|f_x^J(z)-f(z)|=0\quad
	\mbox{for all }x,z\in\bbS,
$$
where $\lim_{J\in\mathcal{I}}a_J$ denotes the limit of the net 
$(a_J)_{J\in\mathcal{I}}$ where $\mathcal{I}$ is directed by inclusion
$\subseteq$ (equivalently, $a_J\to 0$ if and only if $a_{J_i}\to 0$
for every sequence $J_1,J_2,\ldots\in\mathcal{I}$ such that
$J_1\subseteq J_2\subseteq\cdots$ and $\bigcup_iJ_i=I$).  Let us note 
that this notion is slightly weaker than the conventional notion of 
quasilocality used, for example, in  \cite{Geo11}.

\subsubsection*{Metrics}

In the sequel, we fix for each $i\in I$ a metric 
$\eta_i$ on $\bbS^i$ (we assume throughout that $\eta_i$ is measurable as 
a function on $\bbS^i\times\bbS^i$).  
We will write $\|\eta_i\|=\sup_{x,z}\eta_i(x,z)$.

Given a function $f:\bbS\to\mathbb{R}$ and $i\in I$, we define
$$
	\delta_if :=
	\sup_{x,z\in\bbS:x^{I\backslash\{i\}}=z^{I\backslash\{i\}}}
	\frac{|f(x)-f(z)|}{\eta_i(x_i,z_i)}. 
$$
The quantity $\delta_if$ measures the variability of $f(x)$ with respect 
to the variable $x_i$.

\subsubsection*{Matrices}

The calculus of possibly infinite nonnegative matrices will appear 
repeatedly in the sequel.  Given matrices $A=(A_{ij})_{i,j\in I}$ and 
$B=(B_{ij})_{i,j\in I}$ with nonnegative entries $A_{ij}\ge 0$ and 
$B_{ij}\ge 0$, the matrix product is defined as usual by
$$
	(AB)_{ij} = \sum_{k\in I}A_{ik}B_{kj}.
$$
This quantity is well defined as the terms in the sum are all nonnegative, 
but $(AB)_{ij}$ may possibly take the value $+\infty$.  As long as we 
consider only nonnegative matrices, all the usual rules of matrix 
multiplication extend to infinite matrices provided that we allow entries 
with the value $+\infty$ and that we use the convention $+\infty\cdot 0=0$ 
(this follows from the Fubini-Tonelli theorem, cf.\ \cite[Chapter 4]{Dud02}).
In particular, the matrix powers $A^k$, $k\ge 1$ are well defined, and we define
$A^0=I$ where $I:=(\mathbf{1}_{i=j})_{i,j\in I}$ denotes the identity 
matrix.  We will write $A<\infty$ if the nonnegative matrix $A$ satisfies
$A_{ij}<\infty$ for every $i,j\in I$.

\subsubsection{Kernels, covers, local structure}

Recall that a \emph{transition kernel} $\gamma$ from a measurable space
$(\Omega,\mathcal{F})$ to a measurable space $(\Omega',\mathcal{F}')$
is a map $\gamma:\Omega\times\mathcal{F}'\to\mathbb{R}$ such that
$\omega\mapsto\gamma_\omega(A)$ is a measurable function for each 
$A\in\mathcal{F}'$ and $\gamma_\omega(\cdot)$ is a probability 
measure for each $\omega\in\Omega$, cf.\ \cite{Kal02}.
Given a probability measure $\mu$ on
$\Omega$ and function $f$ on $\Omega'$, we define as usual
the probability measure $(\mu\gamma)(A) = \int \mu(d\omega)\gamma_\omega(A)$
on $\Omega'$ and function $(\gamma f)(\omega)=\int 
\gamma_\omega(d\omega')f(\omega')$ on $\Omega$.  A transition kernel 
$\gamma$ between product spaces is called \emph{quasilocal} if 
$\gamma f$ is quasilocal for every bounded and measurable quasilocal 
function $f$.

Our interest throughout this paper is in models of random configurations, 
described by a probability measure $\mu$ on $\bbS$.  We would like to 
understand the properties of such models based on their \emph{local} 
structure.  A natural way to express the local structure in a finite 
region $J\in\mathcal{I}$ is to consider the conditional distribution 
$\gamma^J_x(dz^J)=\mu(dz^J|x^{I\backslash J})$ of the configuration in $J$ 
given a fixed configuration $x^{I\backslash J}$ for the sites outside $J$: 
conceptually, $\gamma^J$ describes how the sites in $J$ ``interact'' with 
the sites outside $J$.
The conditional distribution $\gamma^J$ is a transition kernel from $\bbS$ 
to $\bbS^J$.  To obtain a complete local description of the model, we must 
consider a class of finite regions $J$ that covers the entire set of sites 
$I$.  Let us call a collection of regions 
$\mathcal{J}\subseteq\mathcal{I}$ a \emph{cover} of $I$ if every site 
$i\in I$ is contained in at least one element of $\mathcal{J}$ (note that,
by definition, a cover contains only finite regions).
Given any cover $\mathcal{J}$, the collection 
$(\gamma^J)_{J\in\mathcal{J}}$ provides a local description of the model.

In fact, our main results will hold in a somewhat more general setting 
than is described above.  Let $\mu$ be a probability measure on $\bbS$ and 
$\gamma^J$ be transition kernel from $\bbS$ to $\bbS^J$.  We say that 
$\mu$ is \emph{$\gamma^J$-invariant} if for every bounded measurable 
function $f$
$$
	\int \mu(dx)\,f(x) = \int 
	\mu(dx)\,\gamma^J_x(dz^J)\,f(z^Jx^{I\backslash J});
$$
by a slight abuse of notation, we will also write $\mu f=\mu\gamma^Jf^J$. 
This means that if the configuration $x$ is drawn according to $\mu$, then 
its distribution is left unchanged if we replace the configuration $x^J$ 
inside the region $J$ by a random sample from the distribution 
$\gamma^J_x$, keeping the configuration $x^{I\backslash J}$ outside $J$ 
fixed.
Our main results will be formulated in terms of a collection of transition 
kernels $(\gamma^J)_{J\in\mathcal{J}}$ such that $\mathcal{J}$ is a cover 
of $I$ and such that $\mu$ is $\gamma^J$-invariant for every 
$J\in\mathcal{J}$.
If we choose $\gamma^J_x(dz^J) = \mu(dz^J|x^{I\backslash J})$ as above, 
then the $\gamma^J$-invariance of $\mu$ holds by construction 
\cite[Theorem 6.4]{Kal02}; however, any family of $\gamma^J$-invariant 
kernels will suffice for the validity of our main results.

\begin{remark}
The idea that the collection $(\gamma^J)_{J\in\mathcal{J}}$ provides a 
natural description of high-dimensional probability distributions is 
prevalent in many applications.  In fact, in statistical mechanics, the 
model is usually \emph{defined} in terms of such a family.  To this end, 
one fixes a priori a family of transition kernels 
$(\gamma^J)_{J\in\mathcal{I}}$, called a \emph{specification}, that 
describes the local structure of the model. The definition of $\gamma^J$ 
is done directly in terms of the parameters of the problem (the potentials 
that define the physical interactions, or the local constraints that 
define the combinatorial structure). A measure $\mu$ on $\bbS$ is called a 
\emph{Gibbs measure} for the given specification if 
$\mu(dz^J|x^{I\backslash J})= \gamma^J_x(dz^J)$ for every 
$J\in\mathcal{I}$.  The existence of a Gibbs measure allows to define the 
model $\mu$ in terms of the specification.  It may happen that there are 
multiple Gibbs measures for the same specification: the significance of 
this phenomenon is the presence of a \emph{phase transition}, akin to the 
transition of water from liquid to solid at the freezing point.
As the construction of Gibbs measures from specifications is not essential
for the validity or applicability of our results (cf.\ section 
\ref{sec:appl} below), we omit further details.  
We refer to \cite{Geo11,Sim93,Wei05} for extensive discussion, examples, 
and references.
\end{remark}

\subsection{Main result}
\label{sec:main}

Let $\rho$ and $\tilde\rho$ be probability measures on the space of 
configurations $\bbS$.  Our main result, Theorem \ref{thm:main} below, 
provides a powerful tool to obtain quantitative bounds on the difference 
between $\rho$ and $\tilde\rho$ in terms of their local structure.  
Before we can state our results, we must first introduce some basic 
notions.  Our terminology is inspired by Weitz \cite{Wei05}.

As was explained above, the local description of a 
probability measure $\rho$ on $\bbS$ will be provided in terms of a family 
of transition kernels.  We formalize this as follows.

\begin{defn}
\label{defn:localupd}
A \emph{local update rule} for $\rho$ is a collection 
$(\gamma^J)_{J\in\mathcal{J}}$ where $\mathcal{J}$ is a cover of $I$, 
$\gamma^J$ is a transition kernel from $\bbS$ to $\bbS^J$ and $\rho$ is 
$\gamma^J$-invariant for every $J\in\mathcal{J}$.
\end{defn}

In order to compare two measures $\rho$ and $\tilde\rho$ on the basis of 
their local update rules $(\gamma^J)_{J\in\mathcal{J}}$ and 
$(\tilde\gamma^J)_{J\in\mathcal{J}}$, we must quantify two separate 
effects.  On the one hand, we must understand how the two models differ 
locally: that is, we must quantify how $\gamma_x^J$ and $\tilde\gamma_x^J$ 
differ when acting on the same configuration $x$.    On the other hand, we 
must understand how perturbations to the local update rule in different 
regions interact: to this end, we will quantify the extent to which 
$\gamma_x^J$ and $\gamma_z^J$ differ for different configurations $x,z$.
Both effects will be addressed by introducing a suitable family of 
couplings.  Recall that a probability measure $Q$ on a product space 
$\Omega\times\Omega$ is called a \emph{coupling} of probability measures 
$\mu,\nu$ on $\Omega$ if its marginals coincide with $\mu,\nu$, that is, 
$Q({}\cdot{}\times\Omega)=\mu$ and $Q(\Omega\times{}\cdot{})=\nu$.

\begin{defn}
\label{defn:coupledupd}
A \emph{coupled update rule} for 
$(\rho,\tilde\rho)$ is a collection $(\gamma^J,\tilde\gamma^J,Q^J,
\hat Q^J)_{J\in\mathcal{J}}$, where
$\mathcal{J}$ is a cover of $I$, such that the following properties hold:
\begin{enumerate}
\item $(\gamma^J)_{J\in\mathcal{J}}$ and $(\tilde\gamma^J)_{J\in\mathcal{J}}$
are local update rules for $\rho$ and $\tilde\rho$, respectively.
\item $Q^J_{x,z}$ is a coupling of $\gamma^J_x,\gamma^J_z$ for every
$J\in\mathcal{J}$ and $x,z\in\bbS$ with $\card\{i:x_i\ne z_i\}=1$.
\item $\hat Q^J_x$ is a coupling of $\gamma^J_x,\tilde\gamma^J_x$ for
every $J\in\mathcal{J}$ and $x\in\bbS$.
\end{enumerate}
\end{defn}

We can now state our main result.  The proof will be given in sections 
\ref{sec:step1}--\ref{sec:step3}.

\begin{theorem}
\label{thm:main}
Let $\mathcal{J}$ be a cover of $I$, let $(w_J)_{J\in\mathcal{J}}$ be 
a family of strictly positive weights, and let 
$(\gamma^J,\tilde\gamma^J,Q^J,\hat Q^J)_{J\in\mathcal{J}}$ be a coupled 
update rule for $(\rho,\tilde\rho)$.  Define for $i,j\in I$
\begin{align*}
	W_{ij} &:= \mathbf{1}_{i=j}\sum_{J\in\mathcal{J}:i\in J}w_J,\\
	R_{ij} &:= \sup_{
	\substack{x,z\in\bbS : \\ x^{I\backslash\{j\}} = z^{I\backslash \{j\}}}
	} \frac{1}{\eta_j(x_j,z_j)}\sum_{J\in\mathcal{J}:i\in J}
	w_J\, Q^J_{x,z}\eta_i, \\
	a_j &:= \sum_{J\in\mathcal{J}:j\in J}w_J\outint\tilde\rho(dx)\,
	\hat Q^J_x\eta_j.
\end{align*}
Assume that $\gamma^J$ is quasilocal for every $J\in\mathcal{J}$, and that
\begin{equation}
\label{eq:follmer}
	W_{ii}\le 1\quad\mbox{and}\quad
	\lim_{n\to\infty} \sum_{j\in I}(I-W+R)^n_{ij}\,
	(\rho\otimes\tilde\rho)\eta_j=0
	\quad\mbox{for all }i\in I.
\end{equation}
Then we have
$$
	|\rho f-\tilde\rho f| \le
	\sum_{i,j\in I}\delta_if\,D_{ij}\,W_{jj}^{-1}a_j
	\quad\mbox{where}\quad
	D:=\sum_{n=0}^\infty (W^{-1}R)^n,
$$
for any bounded and measurable quasilocal function $f$ such that
$\delta_if<\infty$ for all $i\in I$.
\end{theorem}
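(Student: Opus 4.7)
The plan is to derive Theorem \ref{thm:main} from a telescoping identity for an associated pair of Markov-type operators on $\bbS$, following the strategy announced at the end of the introduction. For each $J\in\mathcal{J}$ I would regard $\gamma^J$ (respectively $\tilde\gamma^J$) as a Markov kernel on $\bbS$ by replacing the configuration inside $J$ and leaving the outside untouched; by assumption this kernel preserves $\rho$ (respectively $\tilde\rho$). Combining these kernels with the weights $w_J$ yields operators $P$ and $\tilde P$ (a Gibbs sampler and its perturbed counterpart) whose Dobrushin-type influence structure is exactly what the matrices $W$ and $R$ record.

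First, I would establish a telescoping representation. Starting from $\rho P=\rho$ and $\tilde\rho\tilde P=\tilde\rho$, a standard manipulation yields
$$
	\rho f-\tilde\rho f = \sum_{k=0}^\infty \tilde\rho\,(P-\tilde P)\,(P^k f-\rho f),
$$
provided the series converges and the remainder $(\rho-\tilde\rho)P^n f$ tends to $0$. Both facts should follow from condition (\ref{eq:follmer}), with quasilocality of the $\gamma^J$ ensuring that $P^n$ maps quasilocal functions to quasilocal functions and justifying the relevant exchange of limits when $\card I=\infty$.

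Second, I would bound each piece of the telescoping using the coupled update rule. The couplings $\hat Q^J$ together with the outer integral in the definition of $a_j$ should yield a one-step discrepancy bound of the form $|\tilde\rho(P-\tilde P)g|\le\sum_j \delta_j g\cdot W_{jj}^{-1}a_j$. A single-coordinate perturbation analysis using the couplings $Q^J$ should yield a one-step Lipschitz bound $\delta_i(Pg)\le \sum_j (W^{-1}R)_{ij}\,\delta_j g$, after absorbing the diagonal identity contribution of $P$ into the normalization by $W$. Iterating $k$ times gives $\delta_i(P^k f)\le \sum_j ((W^{-1}R)^k)_{ij}\,\delta_j f$, and substituting back collapses the sum over $k$ into the Neumann series $D=\sum_{k\ge 0}(W^{-1}R)^k$, producing the announced bound $\sum_{i,j}\delta_i f\,D_{ij}W_{jj}^{-1}a_j$.

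The main obstacle will be Step 2: rigorously translating the couplings on overlapping regions $J$ into matrix inequalities and handling the per-site normalization that turns the raw $R$ into $W^{-1}R$. In particular, $Q^J$ controls simultaneous changes at all sites of $J$ caused by a single-coordinate perturbation, and this must be unraveled site by site before being recombined across $J$ via the weights $w_J$ in a way consistent with the identity contribution of $P$. A related technical point is to justify the Poisson-series representation together with all exchanges of infinite sums under hypothesis (\ref{eq:follmer}), which is the natural generalization of the Dobrushin--F\"ollmer uniqueness condition to the block setting.
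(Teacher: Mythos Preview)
Your outline follows the paper's overall architecture (Gibbs sampler $P$, telescoping via $\rho P=\rho$, one-step coupling bounds, summation), but Step~2 as written contains a genuine error that would derail the argument.

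The one-step bounds you claim are not the ones that actually hold. For the Gibbs sampler $P=(1-\sum_J w_J)I+\sum_J w_J\gamma^J$ (when this is defined), the single-site coupling analysis via $Q^J_{x,z}$ yields the Wasserstein matrix $I-W+R$, i.e.\ $\delta_j(Pg)\le\sum_i\delta_ig\,(I-W+R)_{ij}$, \emph{not} $W^{-1}R$; likewise the one-step discrepancy via $\hat Q^J_x$ gives $|\tilde\rho(P-\tilde P)g|\le\sum_j\delta_jg\cdot a_j$, with no $W^{-1}$ factor. There is no way to ``absorb the diagonal identity contribution into the normalization by $W$'' at the level of a single step: the identity part contributes $1-W_{jj}$, which is additive, not multiplicative. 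The matrix $W^{-1}$ only appears \emph{after} summing the full series, through the algebraic identity
\[
\sum_{k\ge 0}(I-W+R)^k=\sum_{k\ge 0}(W^{-1}R)^kW^{-1}=DW^{-1},
\]
which is valid because $W$ is diagonal with $0<W_{ii}\le 1$ so that $W^{-1}=\sum_{n\ge 0}(I-W)^n$. Without this identity the telescoping gives $\sum_k(I-W+R)^k$ acting on $a$, and assumption~(\ref{eq:follmer}) is precisely what makes the remainder vanish; your version with $(W^{-1}R)^k$ throughout would require a different hypothesis.

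A second gap is that the Gibbs sampler $P$ you write down is only defined when $\sum_{J\in\mathcal J}w_J\le 1$, whereas Theorem~\ref{thm:main} assumes only $W_{ii}\le 1$ for each $i$. The paper handles this by truncating the cover to finitely many blocks, rescaling the weights by $t/n$, applying the finite-weight bound with $n$ steps, and then passing to the limit $n\to\infty$, $r\to\infty$; the Binomial-to-Poisson limit is what produces the ``Poisson-series representation'' you allude to. This is not a side technicality but the main analytic content of the proof in the infinite setting, and it interacts nontrivially with assumption~(\ref{eq:follmer}) (one needs dominated-convergence arguments controlled by $\sup_\ell\sum_j(I-W+R)^\ell_{ij}c_j$). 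Your sketch does not indicate how to carry this out.
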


\begin{remark}
While it is essential in the proof that $\gamma^J$ and $\tilde\gamma^J$ 
are transition kernels, we do not require that $Q^J$ and $\hat Q^J$ are 
transition kernels in Definition \ref{defn:coupledupd}, that is, the 
couplings $Q^J_{x,z}$ and $\hat Q^J_x$ need not be measurable as 
functions of $x,z$.  It is for this reason that the coefficients $a_j$ are 
defined in terms of an outer integral 
rather than an ordinary integral \cite{VW96}:
$$
	\outint f(x)\,\rho(dx) := 
	\inf\bigg\{
	\int g(x)\,\rho(dx):f\le g,~g\mbox{ is measurable}
	\bigg\}.
$$
When $x\mapsto\hat Q^J_x\eta_j$ is measurable this 
issue can be disregarded.  In practice measurability will hold in all but 
pathological cases, but may not always be trivial to prove. We therefore 
allow for nonmeasurable couplings for sake of technical convenience, so 
that it is not necessary to check measurability of the coupled updates 
when applying Theorem \ref{thm:main}.
\end{remark}

We will presently formulate a number of special cases and extensions of 
Theorem \ref{thm:main} that may be useful in different settings.  A 
detailed application is developed in section \ref{sec:appl}.

\subsection{The classical comparison theorem}

The original comparison theorem of Dobrushin \cite[Theorem 3]{Dob70} and 
its commonly used formulation due to F\"ollmer \cite{Fol82} correspond to 
the special case of Theorem \ref{thm:main} where the cover 
$\mathcal{J}=\mathcal{J}_{\mathrm{s}}:=\{\{i\}:i\in I\}$ consists of 
single sites.  For example, the main result of \cite{Fol82} follows 
readily from Theorem \ref{thm:main} under a mild regularity assumption.  
To formulate it, recall that the \emph{Wasserstein distance} 
$d_\eta(\mu,\nu)$ between probability measures $\mu$ and $\nu$ on a 
measurable space $\Omega$ with respect to a measurable metric $\eta$ is 
defined as
$$
	d_\eta(\mu,\nu) := \inf_{
	\substack{Q(\mbox{}\cdot\mbox{}\times\Omega)=\mu\\
	Q(\Omega\times\mbox{}\cdot\mbox{})=\nu}}  Q\eta,
$$
where the infimum is taken over probability measures $Q$ on 
$\Omega\times\Omega$ with the given marginals $\mu$ and $\nu$.
We now obtain the following classical result (cf.\ 
\cite{Fol82} and \cite[Remark 2.17]{Fol88}).

\begin{cor}[\rm \cite{Fol82}]
\label{cor:follmer}
Assume $\bbS^i$ is Polish and $\eta_i$ is lower-semicontinuous
for all $i\in I$.
Let $(\gamma^{\{i\}})_{i\in I}$ and $(\tilde\gamma^{\{i\}})_{i\in I}$
be local update rules for $\rho$ and $\tilde\rho$, respectively, and let
$$
	C_{ij} := \sup_{
        \substack{x,z\in\bbS : \\ x^{I\backslash\{j\}} = z^{I\backslash \{j\}}}
        } 
	\frac{d_{\eta_i}(\gamma^{\{i\}}_x,\gamma^{\{i\}}_z)}{\eta_j(x_j,z_j)},
	\qquad
	b_j := \outint\tilde\rho(dx)\,d_{\eta_j}(\gamma^{\{j\}}_x,
	\tilde\gamma^{\{j\}}_x).
$$
Assume that $\gamma^{\{i\}}$ is quasilocal for every $i\in I$, and that
$$
	\lim_{n\to\infty}\sum_{j\in I}C_{ij}^n
	(\rho\otimes\tilde\rho)\eta_j=0
	\quad\mbox{for all }i\in I.
$$
Then we have
$$
	|\rho f-\tilde\rho f| \le
	\sum_{i,j\in I}\delta_if\,D_{ij}\,b_j
	\quad\mbox{where}\quad
	D:=\sum_{n=0}^\infty C^n,
$$
for any bounded and measurable quasilocal function $f$ such that
$\delta_if<\infty$ for all $i\in I$.
\end{cor}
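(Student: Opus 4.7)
The plan is to derive Corollary \ref{cor:follmer} as the specialization of Theorem \ref{thm:main} to the singleton cover $\mathcal{J} = \mathcal{J}_{\mathrm{s}} = \{\{i\}:i\in I\}$ with unit weights $w_{\{i\}} = 1$, where the couplings in the coupled update rule are chosen to be optimal for the Wasserstein distance. So first I would extend the given data $(\gamma^{\{i\}}, \tilde\gamma^{\{i\}})_{i\in I}$ to a coupled update rule $(\gamma^{\{i\}},\tilde\gamma^{\{i\}},Q^{\{i\}},\hat Q^{\{i\}})_{i\in I}$ by choosing, for each $x,z\in\bbS$ with $\mathrm{card}\{k:x_k\ne z_k\}=1$, a coupling $Q^{\{i\}}_{x,z}$ of $\gamma^{\{i\}}_x,\gamma^{\{i\}}_z$ attaining $Q^{\{i\}}_{x,z}\eta_i = d_{\eta_i}(\gamma^{\{i\}}_x,\gamma^{\{i\}}_z)$, and analogously $\hat Q^{\{i\}}_x$ attaining $\hat Q^{\{i\}}_x\eta_i = d_{\eta_i}(\gamma^{\{i\}}_x,\tilde\gamma^{\{i\}}_x)$. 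The existence of such pointwise-optimal couplings is the main technical input: it is guaranteed by the classical Kantorovich existence theorem on a Polish space with a lower-semicontinuous (hence measurable) cost, and no joint measurability in $(x,z)$ is needed because Definition \ref{defn:coupledupd} does not require $Q^J$ or $\hat Q^J$ to be transition kernels.

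With these choices the quantities appearing in Theorem \ref{thm:main} collapse to those of the corollary. Since the only $J\in\mathcal{J}_{\mathrm{s}}$ containing a given $i$ is $J=\{i\}$, one immediately reads off
\[
W_{ij} = \mathbf{1}_{i=j},\qquad R_{ij} = C_{ij},\qquad a_j = b_j,
\]
so $W=I$ is the identity, $W_{ii}=1$ trivially, $W^{-1}R=C$, and $D=\sum_{n\ge 0}(W^{-1}R)^n=\sum_{n\ge 0}C^n$. The convergence hypothesis $\lim_n\sum_j(I-W+R)^n_{ij}(\rho\otimes\tilde\rho)\eta_j=0$ in \eqref{eq:follmer} simplifies to $\lim_n\sum_j C^n_{ij}(\rho\otimes\tilde\rho)\eta_j=0$, which is the hypothesis of Corollary \ref{cor:follmer}, and the conclusion of Theorem \ref{thm:main} becomes
\[
|\rho f-\tilde\rho f|\le\sum_{i,j\in I}\delta_i f\,D_{ij}\,W_{jj}^{-1}a_j = \sum_{i,j\in I}\delta_i f\,D_{ij}\,b_j,
\]
exactly as claimed. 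The quasilocality assumption on $\gamma^{\{i\}}$ is inherited directly.

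The only genuine obstacle is the pointwise existence of optimal Wasserstein couplings, and this is where the Polish/lower-semicontinuous hypothesis enters; once that is invoked, the remainder is a mechanical specialization of Theorem \ref{thm:main}.
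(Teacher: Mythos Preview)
Your proposal is correct and matches the paper's own argument essentially verbatim: the paper also specializes Theorem \ref{thm:main} to the singleton cover $\mathcal{J}_{\rm s}$ with unit weights, chooses $Q^{\{i\}}_{x,z}$ and $\hat Q^{\{i\}}_x$ to be Wasserstein-optimal couplings (invoking the Polish/lower-semicontinuous hypothesis via \cite[Theorem 4.1]{Vil09} for their existence), and then reads off $W=I$, $R=C$, $a=b$. Your observation that no measurability of the couplings in $(x,z)$ is needed is exactly the point of the outer integral in the definition of $a_j$.
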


If $Q^{\{i\}}_{x,z}$ and $\hat Q^{\{i\}}_x$ are
minimizers in the definition of 
$d_{\eta_i}(\gamma^{\{i\}}_x,\gamma^{\{i\}}_z)$ and 
$d_{\eta_i}(\gamma^{\{i\}}_x,\tilde\gamma^{\{i\}}_x)$, respectively, 
and if we let $\mathcal{J}=\mathcal{J}_{\rm s}$ and $w_{\{i\}}=1$ for 
all $i\in I$, then Corollary \ref{cor:follmer} follows immediately
from Theorem \ref{thm:main}.  For simplicity, we have imposed the mild
topological regularity assumption on $\bbS^i$ and $\eta_i$ to ensure the 
existence of minimizers \cite[Theorem 4.1]{Vil09} (when minimizers do not 
exist, it is possible with some more work to obtain a similar result by 
using near-optimal couplings in Theorem \ref{thm:main}).
Let us note that when $\eta_i(x,z)=\mathbf{1}_{x\ne z}$ is the trivial 
metric, the Wasserstein distance reduces to the total variation distance
$$
	d_\eta(\mu,\nu) = \frac{1}{2}\|\mu-\nu\| :=
	\frac{1}{2}\sup_{f:\|f\|\le 1}|\mu f-\nu f|
	\quad\mbox{when}
	\quad\eta(x,z)=\mathbf{1}_{x\ne z},
$$
and an optimal coupling exists in any measurable space \cite[p.\ 472]{Dob70}.
Thus in this case no regularity assumptions are needed, and
Corollary \ref{cor:follmer} reduces to the textbook version 
of the comparison theorem that appears, e.g., in \cite[Theorem 
8.20]{Geo11} or \cite[Theorem V.2.2]{Sim93}.  

While the classical comparison theorem of Corollary \ref{cor:follmer} 
follows from our main result, it should be emphasized that the single site 
assumption $\mathcal{J}=\mathcal{J}_{\rm s}$ is a significant restriction. 
The general statement of Theorem \ref{thm:main} constitutes a crucial 
improvement that substantially extends the range of applicability of the 
comparison method, as we will see below and in section 
\ref{sec:appl}.  Let us also note that the proofs in \cite{Dob70,Fol82}, 
based on the ``method of estimates,'' do not appear to extend easily 
beyond the single site setting.  We use a different (though related) method 
of proof that systematically exploits the connection with Markov chains.

\subsection{Alternative assumptions}
\label{sec:altcond}

The key assumption of Theorem \ref{thm:main} is (\ref{eq:follmer}).  
The aim of the present section is to obtain a number of useful 
alternatives to assumption (\ref{eq:follmer}) that are easily verified in 
practice.

We begin by defining the notion of a tempered measure \cite[Remark 2.17]{Fol88}.

\begin{defn}
A probability measure $\mu$ on $\bbS$ is called \emph{$x^\star$-tempered} 
if
$$
	\sup_{i\in I}\int\mu(dx)\,\eta_i(x_i,x^\star_i)<\infty.
$$
In the sequel $x^\star\in\bbS$ will be considered fixed and $\mu$ will
be called \emph{tempered}.
\end{defn}

It is often the case in practice that the collection of metrics is 
uniformly bounded, that is, $\sup_i\|\eta_i\|<\infty$.  In this case, every 
probability measure on $\bbS$ is trivially tempered.  However, the 
restriction to tempered measures may be essential when the spaces $\bbS^i$ 
are noncompact (see, for example, \cite[section 5]{Dob70} for a simple but 
illuminating example).

Let us recall that a norm $\|\cdot\|$ defined on an algebra of square 
(possibly infinite) matrices is called a \emph{matrix norm} if 
$\|AB\|\le\|A\|\,\|B\|$.  We also recall that the matrix norms 
$\|\cdot\|_\infty$ and $\|\cdot\|_1$ are defined for nonnegative matrices 
$A=(A_{ij})_{i,j\in I}$ as
$$
	\|A\|_\infty := \sup_{i\in I}\sum_{j\in I} A_{ij},\qquad\qquad
	\|A\|_1 := \sup_{j\in I}\sum_{i\in I} A_{ij}.
$$
The following result collects various useful alternatives to 
(\ref{eq:follmer}).  It is proved in section \ref{sec:coruniq}.

\begin{cor}
\label{cor:uniq}
Suppose that $\rho$ and $\tilde\rho$ are tempered.
Then the conclusion of Theorem \ref{thm:main} remains valid when the 
assumption (\ref{eq:follmer}) is replaced by one of the following:
\begin{enumerate}
\item $\card I<\infty$ and $D<\infty$.
\item $\card I<\infty$, $R<\infty$,
and $\|(W^{-1}R)^n\|<1$ for some matrix norm
$\|\cdot\|$ and $n\ge 1$.
\item $\sup_iW_{ii}<\infty$ and $\|W^{-1}R\|_\infty<1$.
\item $\sup_iW_{ii}<\infty$, $\|RW^{-1}\|_\infty<\infty$, and
$\|(RW^{-1})^n\|_\infty<1$ for some $n\ge 1$.
\item $\sup_iW_{ii}<\infty$, $\sum_i\|\eta_i\|<\infty$, and
$\|RW^{-1}\|_1<1$.
\item $\sup_iW_{ii}<\infty$,
there exists a metric $m$ on $I$ such that 
$\sup\{m(i,j):R_{ij}>0\}<\infty$ and $\sup_i\sum_j 
e^{-\beta m(i,j)}<\infty$ for all $\beta>0$, and $\|RW^{-1}\|_1<1$.
\end{enumerate}
\end{cor}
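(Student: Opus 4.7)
I aim to verify both parts of \eqref{eq:follmer} under each of the six alternative hypotheses, after which Theorem~\ref{thm:main} applies directly. First I observe that the bound of Theorem~\ref{thm:main}, together with the matrices $W^{-1}R$, $D$, and the products $W_{jj}^{-1}a_j$, is invariant under the rescaling $w_J \mapsto cw_J$ for any $c>0$, which multiplies $W$, $R$, and $a_j$ each by $c$. In all six cases $\sup_i W_{ii}<\infty$ (automatic when $\card I<\infty$, explicit in (3)--(6)), so I may assume $W_{ii}\le 1$ for every $i\in I$, which disposes of the first part of \eqref{eq:follmer}. Temperedness and the triangle inequality yield $v_j := (\rho\otimes\tilde\rho)\eta_j \le K$ for some constant $K<\infty$ (in case (5) I use $v_j\le\|\eta_j\|$ instead), so the remaining task is to show $(M^n v)_i\to 0$ for every $i$, where $M := I-W+R \ge 0$.

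The central identity is
$$
\sum_{n=0}^N M^n \;=\; DW^{-1} - M^{N+1}DW^{-1},
$$
obtained by multiplying the telescope $\sum_{n=0}^N M^n(W-R) = I-M^{N+1}$ by $DW^{-1}$ and using $(W-R)DW^{-1}=I$. The sequence $M^{N+1}DW^{-1}$ is entrywise nonincreasing (since $M^{N+2}DW^{-1} - M^{N+1}DW^{-1} = -M^{N+1}\le 0$, via $(R-W)DW^{-1}=-I$), with limit $V\ge 0$ satisfying $MV=V$; hence each column $u := V_{\cdot,j}$ is a bounded nonnegative solution of $u = W^{-1}Ru = (W^{-1}R)^k u$ for every $k$. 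If $u=0$, then $\sum_n M^n = DW^{-1}$ is entrywise finite, so $M^n\to 0$ and $(M^n v)_i\to 0$ by dominated convergence (since $M^n\le DW^{-1}$, the bound $(DW^{-1})_{ij}v_j$ is summable in $j$). In each case I therefore verify (a) $D$ is entrywise finite and (b) $u=0$: in case (3), $\|W^{-1}R\|_\infty<1$ gives $D$ finite and the norm bound $\|u\|_\infty\le\|W^{-1}R\|_\infty\|u\|_\infty$ forces $u=0$; in case (4), the similarity $(W^{-1}R)^k = W^{-1}(RW^{-1})^k W$ makes $\tilde D := \sum_n(RW^{-1})^n$ finite in $\|\cdot\|_\infty$, hence $D=W^{-1}\tilde D W$ entrywise finite, and substituting $w:=Wu$ (bounded by $\|\tilde D\|_\infty$, since $Wu\le W(DW^{-1})_{\cdot,j}=\tilde D_{\cdot,j}$) gives $w=(RW^{-1})^n w$, forcing $w=0$; cases (1), (2) follow from finite-dimensional spectral arguments that directly yield $(W^{-1}R)^n\to 0$.

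For (5) and (6) the column-sum condition $\|W^{-1}R^T\|_\infty = \|RW^{-1}\|_1 < 1$ lets me apply the above argument to $M^T = I-W+R^T$, yielding $(M^T)^n\mathbf{1}\downarrow 0$, i.e.\ $\sum_i M^n_{ij}\to 0$ uniformly bounded by $1$ for each $j$. In case (5), summability $\sum_j\|\eta_j\|<\infty$ and Fubini give $\sum_i(M^nv)_i = \sum_j v_j\sum_i M^n_{ij}\to 0$ by dominated convergence, so $(M^nv)_i\to 0$ for each $i$. The main obstacle is case (6), where $v$ is only bounded. For each target site $i_0$, I introduce the exponential weight $\phi_j := e^{-\beta m(i_0,j)}$ and the conjugated matrix $\tilde M := \mathrm{diag}(\phi)\, M\, \mathrm{diag}(\phi)^{-1}$; the bounded range $d := \sup\{m(i,j):R_{ij}>0\}$ gives $\tilde R_{ij}\le e^{\beta d}R_{ij}$, so $\|\tilde R W^{-1}\|_1 \le e^{\beta d}\|RW^{-1}\|_1 < 1$ for $\beta>0$ small. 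Applying case (5) to $\tilde M$ with test vector $v'_j := \phi_j v_j\le K\phi_j$ (summable by the subexponential-growth hypothesis $\sup_i\sum_j e^{-\beta m(i,j)}<\infty$) yields $(\tilde M^n v')_{i_0}\to 0$, and since $\phi_{i_0}=1$ the identity $(M^nv)_{i_0} = \phi_{i_0}^{-1}(\tilde M^n v')_{i_0}$ delivers $(M^nv)_{i_0}\to 0$. The delicate step is choosing $\beta$ compatibly with both the contraction $e^{\beta d}\|RW^{-1}\|_1<1$ and the weighted summability.
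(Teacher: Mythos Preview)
Your reduction (normalize to $W_{ii}\le 1$, use temperedness to get $v_j:=(\rho\otimes\tilde\rho)\eta_j\le K$, then verify $(M^nv)_i\to 0$ for $M=I-W+R$) is exactly the paper's. The implementation is mostly sound, but the dominated-convergence step contains a genuine gap in case~(3). You assert that the dominator $(DW^{-1})_{ij}v_j$ is summable in $j$, i.e.\ that $\sum_j D_{ij}W_{jj}^{-1}<\infty$. Under $\|W^{-1}R\|_\infty<1$ you only obtain $\|D\|_\infty<\infty$, and since no lower bound on $W_{jj}$ is assumed this row sum can diverge: take $I=\mathbb{N}$, $W_{ii}=2^{-i}$, $(W^{-1}R)_{i,i+1}=3/4$ and all other entries zero; then $D_{0j}=(3/4)^j$ and $\sum_j D_{0j}W_{jj}^{-1}=\sum_j(3/2)^j=\infty$. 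Note also that your condition~(b), $u=0$, is vacuous once $D$ is entrywise finite: the nonnegative identity $\sum_{n\ge0}M^n=DW^{-1}$ (proved in the paper) already gives $M^{N+1}DW^{-1}=\sum_{n>N}M^n\downarrow 0$, so $V=0$ automatically and never supplies the missing summability.

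The fix is immediate and is essentially the argument you already run for $M^T$ in case~(5): since $v_j\le K$ it suffices to show $(M^n\mathbf{1})_i\to 0$; the sequence $M^n\mathbf{1}$ is nonincreasing because $(W-R)\mathbf{1}_i=W_{ii}(1-(W^{-1}R\mathbf{1})_i)\ge W_{ii}(1-\|W^{-1}R\|_\infty)>0$, its bounded limit $\ell\le\mathbf{1}$ satisfies $\ell=W^{-1}R\ell$ by monotone convergence, and hence $\ell=0$. The paper proves case~(3) by a different, probabilistic route (interpreting $M$ as a lazy version of the killed chain with substochastic transition matrix $W^{-1}R$, so that $\sum_j M^n_{ij}$ is a survival probability tending to zero); your monotone/fixed-point argument is a legitimate matrix-analytic alternative once stated correctly. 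In case~(4) your dominated convergence \emph{does} go through, since $DW^{-1}=W^{-1}\tilde D$ gives $\sum_j(DW^{-1})_{ij}\le W_{ii}^{-1}\|\tilde D\|_\infty<\infty$; this is precisely the paper's computation for~(4), and you should make it explicit rather than folding it into the general scheme. Your arguments for~(5) and~(6) are correct; the reduction of~(6) to~(5) via conjugation by $e^{-\beta m(i_0,\cdot)}$ is a nice variant of the paper's reduction of~(6) to~(4) via the weighted matrix norm $\|\cdot\|_{1,\beta m}$.
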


The conditions of Corollary \ref{cor:uniq} are closely related to the 
uniqueness problem for Gibbs measures.  Suppose that the collection of 
quasilocal transition kernels $(\gamma^J)_{J\in\mathcal{J}}$ is a local 
update rule for $\rho$. It is natural to ask whether $\rho$ is the 
\emph{unique} measure that admits $(\gamma^J)_{J\in\mathcal{J}}$ as a 
local update rule (see the remark at the end of section 
\ref{sec:setting}). We now observe that uniqueness is a necessary 
condition for the conclusion of Theorem \ref{thm:main}.  Indeed, let 
$\tilde\rho$ be another measure that admits the same local update rule. If 
(\ref{eq:follmer}) holds, we can apply Theorem \ref{thm:main} with 
$\tilde\gamma^J=\gamma^J$ and $a_j=0$ to conclude that $\tilde\rho=\rho$. 
In particular, $\sum_j(I-W+R)^n_{ij}\to 0$ in Theorem \ref{thm:main}
evidently implies uniqueness in the class of tempered measures.

Of course, the point of Theorem \ref{thm:main} is that it provides a 
quantitative tool that goes far beyond qualitative uniqueness questions.  
It is therefore interesting to note that this single result nonetheless 
captures many of the uniqueness conditions that are used in the literature.  
In Corollary \ref{cor:uniq}, Condition 3 is precisely the ``influence on a 
site'' condition of Weitz \cite[Theorem 2.5]{Wei05} (our setting is even 
more general in that we do not require bounded-range interactions as is 
essential in \cite{Wei05}). Conditions 5 and 6 constitute a slight 
strengthening (see below) of the ``influence of a site'' condition of 
Weitz \cite[Theorem 2.7]{Wei05} under summable metric or subexponential 
graph assumptions, in the spirit of the classical uniqueness condition of 
Dobrushin and Shlosman \cite{DS85}.  In the finite setting with single site 
updates, Condition 2 is in the spirit of \cite{DGJ09} and Condition 4 
is in the spirit of \cite{DGJ08}.

On the other hand, we can now see that Theorem \ref{thm:main} provides a 
crucial improvement over the classical comparison theorem.  The single 
site setting of Corollary \ref{cor:follmer} corresponds essentially to the 
original Dobrushin uniqueness regime \cite{Dob70}.  It is well known that 
this setting is restrictive, in that it captures only a small part of the 
parameter space where uniqueness of Gibbs measures holds.  It is precisely 
for this reason that Dobrushin and Shlosman introduced their improved 
uniqueness criterion in terms of larger blocks \cite{DS85}, which in many 
cases allows to capture a large part of or even the entire uniqueness 
region; see \cite[section 5]{Wei05} for examples.  The generalized 
comparison Theorem \ref{thm:main} in terms of larger blocks can therefore 
be fruitfully applied to a much larger and more natural class of models 
than the classical comparison theorem.  This point will be further 
emphasized in the context of the application that will be developed in 
detail in section \ref{sec:appl}.

\begin{remark}
The ``influence of a site'' condition $\|RW^{-1}\|_1<1$ that appears in 
Corollary \ref{cor:uniq} is slightly stronger than the corresponding 
condition of Dobrushin-Shlosman \cite{DS85} and Weitz \cite[Theorem 
2.7]{Wei05}. Writing out the definition of $R$, we find that our condition 
reads
$$
	\|RW^{-1}\|_1 =
	\sup_{j\in I}W_{jj}^{-1}\sum_{i\in I} 
	\sup_{
	\substack{x,z\in\bbS : \\ x^{I\backslash\{j\}} = z^{I\backslash \{j\}}}
	} \frac{1}{\eta_j(x_j,z_j)}\sum_{J\in\mathcal{J}:i\in J}
	w_J\, Q^J_{x,z}\eta_i<1,
$$
while the condition of \cite[Theorem 2.7]{Wei05} (which extends the 
condition of \cite{DS85}) reads
$$
	\sup_{j\in I}W_{jj}^{-1}
	\sup_{
	\substack{x,z\in\bbS : \\ x^{I\backslash\{j\}} = z^{I\backslash \{j\}}}
	} \frac{1}{\eta_j(x_j,z_j)}
	\sum_{i\in I} 
	\sum_{J\in\mathcal{J}:i\in J}
	w_J\, Q^J_{x,z}\eta_i<1.
$$
The latter is slightly weaker as the sum over sites $i$ appears 
inside the supremum over configurations $x,z$.  While the distinction 
between these conditions is inessential in many applications, 
there do exist situations in which the weaker condition yields an 
essential improvement, see, e.g., \cite[section 5.3]{Wei05}.  In such 
problems, Theorem \ref{thm:main} is not only limited by the stronger
uniqueness condition but could also lead to poor quantitative bounds, as 
the comparison bound is itself expressed in terms of the uniform influence 
coefficients $R_{ij}$.

It could therefore be of interest to develop comparison theorems that are 
able to exploit the finer structure that is present in the weaker 
uniqueness condition.  In fact, the proof of Theorem \ref{thm:main} 
already indicates a natural approach to such improved bounds.  However, 
the resulting comparison theorems are necessarily \emph{nonlinear} in that 
the action of the matrix $R$ is replaced by a nonlinear operator 
$\mathsf{R}$. The nonlinear expressions are somewhat difficult to handle 
in practice, and as we do not at present have a compelling application for 
such bounds we do not pursue this direction here.  However, for 
completeness, we will briefly sketch at the end of section \ref{sec:step2} 
how such bounds can be obtained. 
\end{remark}

\subsection{A one-sided comparison theorem}

As was discussed in section \ref{sec:setting}, it is natural in many 
applications to describe high-dimensional probability distributions in 
terms of local conditional probabilities of the form 
$\mu(dz^J|x^{I\backslash J})$.  This is in essence a static picture, where 
we describe the behavior of each local region $J$ given that the 
configuration of the remaining sites $I\backslash J$ is frozen.  In models 
that possess dynamics, this description is not very natural. In this 
setting, each site $i\in I$ occurs at a given time $\tau(i)$, and its 
state is only determined by the configuration of sites $j\in I$ in the 
past and present $\tau(j)\le\tau(i)$, but not by the future.  For example, 
the model might be defined as a high-dimensional Markov chain whose 
description is naturally given in terms of one-sided conditional 
probabilities (see, e.g., \cite{Fol79} and the application in section 
\ref{sec:appl}). It is therefore interesting to note that the original 
comparison theorem of Dobrushin \cite{Dob70} is actually more general than 
Corollary \ref{cor:follmer} in that it is applicable both in the static 
and dynamic settings.  We presently develop an analogous generalization to 
Theorem \ref{thm:main}.

For the purposes of this section, we assume that we are given a function 
$\tau:I\to\mathbb{Z}$ that assigns to each site $i\in I$ an integer index 
$\tau(i)$.  We define
$$
	I_{\le k}:=\{i\in I:\tau(i)\le k\},\qquad\qquad
	\bbS_{\le k}:=\bbS^{I_{\le k}},
$$
and for any probability measure 
$\rho$ on $\bbS$ we denote by $\rho_{\le k}$ the marginal distribution on 
$\bbS_{\le k}$.

\begin{defn}
\label{defn:onesidelocalupd}
A \emph{one-sided local update rule} for $\rho$ is a collection 
$(\gamma^J)_{J\in\mathcal{J}}$ where
\begin{enumerate}
\item $\mathcal{J}$ is a cover of $I$ such that $\min_{i\in J}\tau(i)=
\max_{i\in J}\tau(i)=:\tau(J)$ for every $J\in\mathcal{J}$.
\item $\gamma^J$ is a transition kernel from $\bbS_{\le\tau(J)}$ to 
$\bbS^J$.
\item $\rho_{\le\tau(J)}$ is $\gamma^J$-invariant for every $J\in\mathcal{J}$.
\end{enumerate}
\end{defn}

The canonical example of a one-sided local update rule is to consider the 
one-sided conditional distributions 
$\gamma^J_x(dz^J)=\rho(dz^J|x^{I_{\le\tau(J)}\backslash J})$.  This 
situation is particularly useful in the investigation of interacting 
Markov chains, cf.\ \cite{Dob70,Fol79}, where $\tau(j)$ denotes the time 
index of the site $j$ and we condition only on the past and present, but 
not on the future.

\begin{defn}
\label{defn:onesidecoupledupd}
A \emph{one-sided coupled update rule} for 
$(\rho,\tilde\rho)$ is a collection of transition kernels 
$(\gamma^J,\tilde\gamma^J,Q^J,
\hat Q^J)_{J\in\mathcal{J}}$ such that the following hold:
\begin{enumerate}
\item $(\gamma^J)_{J\in\mathcal{J}}$ and $(\tilde\gamma^J)_{J\in\mathcal{J}}$
are one-sided local update rules for $\rho$ and $\tilde\rho$, 
respectively.
\item $Q^J_{x,z}$ is a coupling of $\gamma^J_x,\gamma^J_z$ for
$J\in\mathcal{J}$ and $x,z\in\bbS_{\le\tau(J)}$ with $\card\{i:x_i\ne 
z_i\}=1$.
\item $\hat Q^J_x$ is a coupling of $\gamma^J_x,\tilde\gamma^J_x$ for
$J\in\mathcal{J}$ and $x\in\bbS_{\le\tau(J)}$.
\end{enumerate}
\end{defn}

We can now state a one-sided counterpart to Theorem \ref{thm:main}.

\begin{theorem}
\label{thm:oneside}
Let $(\gamma^J,\tilde\gamma^J,Q^J,\hat Q^J)_{J\in\mathcal{J}}$ be a 
one-sided coupled update rule for $(\rho,\tilde\rho)$, and let
$(w_J)_{J\in\mathcal{J}}$ be a family of strictly positive weights.
Define the matrices $W$ and $R$ and the vector $a$ as in Theorem 
\ref{thm:main}.  Assume that $\gamma^J$ is quasilocal for every 
$J\in\mathcal{J}$, that
\begin{equation}
\label{eq:oneside}
	\sum_{j\in I}D_{ij}\,
	(\rho\otimes\tilde\rho)\eta_j<\infty
	\quad\mbox{for all }i\in I
	\qquad\mbox{where}\qquad
	D:=\sum_{n=0}^\infty (W^{-1}R)^n,
\end{equation}
and that (\ref{eq:follmer}) holds.  Then we have
$$
	|\rho f-\tilde\rho f| \le
	\sum_{i,j\in I}\delta_if\,D_{ij}\,W_{jj}^{-1}a_j
$$
for any bounded and measurable quasilocal function $f$ such that
$\delta_if<\infty$ for all $i\in I$.
\end{theorem}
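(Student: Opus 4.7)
The plan is to mimic the proof of Theorem \ref{thm:main} (sections \ref{sec:step1}--\ref{sec:step3}), with the essential modification that the underlying Markov chain respects the time ordering induced by $\tau$. The key structural observation is that in the one-sided setting the influence matrix $R$ is \emph{time-upper-triangular}: for $x,z\in\bbS$ differing only at a site $j$ with $\tau(j)>\tau(J)$ one has $x^{I_{\le\tau(J)}}=z^{I_{\le\tau(J)}}$ and therefore $\gamma_x^J=\gamma_z^J$, so the optimal coupling $Q^J_{x,z}$ is diagonal and $R_{ij}=0$ whenever $\tau(j)>\tau(i)$. Triangularity is inherited by $(W^{-1}R)^n$ and hence by $D$, so that in the final bound only indices $j$ with $\tau(j)\le\tau(i)$ contribute for any given $i$.

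\textbf{Localization.} Following the same reduction as in Theorem \ref{thm:main}, I would first pass to local $f$. A quasilocal approximation $f_x^K\to f$ combined with dominated convergence reduces matters to $f$ that is $K$-local for some $K\in\mathcal{I}$; the integrability hypothesis (\ref{eq:oneside}) is exactly what makes the dominating function integrable, since $a_j$ is bounded in terms of $(\rho\otimes\tilde\rho)\eta_j$ via the fact that $\hat Q^J_x$ is a coupling of $\gamma_x^J,\tilde\gamma_x^J$ together with invariance of $\rho_{\le\tau(J)},\tilde\rho_{\le\tau(J)}$. With $f$ $K$-local, set $T:=\max_{i\in K}\tau(i)$, so that $f$ depends only on $\bbS_{\le T}$.

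\textbf{Causal Gibbs sampler.} The core of the proof is to build a Markov chain on $\bbS_{\le T}$ whose limit distribution is $\rho_{\le T}$. Rather than using a single ergodic chain as in Theorem \ref{thm:main}, I would sweep through time slices $k=k_0,k_0+1,\dots,T$ in increasing order (assuming $k_0:=\min\{\tau(i):i\in I_{\le T}\}$ is finite; the unbounded-past case follows by a final limit), running within each slice the Gibbs sampler associated with $\{J\in\mathcal{J}:\tau(J)=k\}$. One-sidedness is crucial here: because each $\gamma^J$ with $\tau(J)=k$ reads only from times $\le k$ and writes only at time $k$, updates at time $k$ do not disturb the already-equilibrated marginal at times $<k$. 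The intra-slice sampler leaves $\rho_{\le k}$ invariant and, under (\ref{eq:follmer}) restricted to the slice, equilibrates to the correct conditional distribution $\rho(dx^{I_k}\mid x^{I_{<k}})$; composing from $k_0$ up to $T$ therefore produces $\rho_{\le T}$ exactly. The coupled analogue built from $(Q^J,\hat Q^J)$ simultaneously samples $(\rho_{\le T},\tilde\rho_{\le T})$.

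\textbf{Estimate and main obstacle.} With the causal coupled chain in place, I would adapt the potential-function argument from the proof of Theorem \ref{thm:main}: within each slice $k$, condition (\ref{eq:follmer}) restricted to the slice drives the intra-slice $(I-W+R)^n$ terms to zero and yields a slice-local comparison bound whose ``input'' is the time-$<k$ discrepancy already accumulated; telescoping these bounds across slices $k_0\le k\le T$ and using the time-triangularity of $D$ produces the stated bound $\sum_{i,j}\delta_if\,D_{ij}W_{jj}^{-1}a_j$. Assumption (\ref{eq:oneside}) enters precisely to justify absolute summability of the telescoped bound across slices and to allow the limits $k_0\to-\infty$ and $K\uparrow I$. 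The principal technical obstacle is that the natural Gibbs chain built from one-sided kernels is \emph{not} ergodic for $\rho$ itself---it only leaves each marginal $\rho_{\le\tau(J)}$ invariant---so Theorem \ref{thm:main}'s single-chain argument must be replaced by a finite composition of within-slice chains whose error estimates chain together without blowing up. Controlling this chaining relies simultaneously on the causal structure of $R$ (to prevent later slices from feeding back into earlier ones) and on the integrability hypothesis (\ref{eq:oneside}) (to ensure that the cross-slice summation converges).
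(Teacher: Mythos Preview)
Your proposal is essentially the paper's proof: they too induct over time slices, applying the Theorem~\ref{thm:main} machinery to the updates $\{J:\tau(J)=k\}$ at each level (Proposition~\ref{prop:onesideinduction}, whose proof reuses the Gibbs-sampler/scaling-limit argument of sections~\ref{sec:step2}--\ref{sec:step3}) and then chain the slice bounds via the explicit triangular identity $D_{ij}=\sum_{l\in I_k}\sum_{q\in I_{\le k-1}}D_{il}(W^{-1}R)_{lq}D_{qj}$ for $i\in I_k$, $j\in I_{\le k-1}$. One small correction: assumption~(\ref{eq:oneside}) is invoked only to kill the residual term $\sum_{j\in I_{\le k-n}}\delta_if\,D_{ij}\,(\rho\otimes\tilde\rho)\eta_j$ in the limit $k_0\to-\infty$ (the paper notes it is not needed when $\inf_i\tau(i)>-\infty$), not for the quasilocal-to-local reduction, and your claimed bound of $a_j$ by $(\rho\otimes\tilde\rho)\eta_j$ is not used and not obviously true.
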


Let us remark that the result of Theorem \ref{thm:oneside} is formally the 
same as that of Theorem \ref{thm:main}, except that we have changed the 
nature of the update rules used in the definition of the coefficients.  
We also require a further assumption (\ref{eq:oneside}) in addition
to assumption (\ref{eq:follmer}) of Theorem \ref{thm:main}, but this is 
not restrictive in practice: in particular, it is readily verified that 
the conclusion of Theorem \ref{thm:oneside} also holds under any of the 
conditions of Corollary \ref{cor:uniq}.

\section{Proofs}
\label{sec:proofs}

\subsection{General comparison principle}
\label{sec:step1}

The proof of Theorem \ref{thm:main} is derived from a general comparison 
principle for Markov chains that will be formalized in this section.  The 
basic idea behind this principle is to consider two transition kernels $G$ 
and $\tilde G$ on $\bbS$ such that $\rho G=G$ and $\tilde\rho\tilde G= 
\tilde\rho$.  One should think of $G$ as the transition kernel of a Markov 
chain that admits $\rho$ as its invariant measure, and similarly for 
$\tilde G$.  The comparison principle of this section provides a general 
method to bound the difference between the invariant measures $\rho$ and 
$\tilde\rho$ in terms of the transition kernels $G$ and $\tilde G$.  In 
the following sections, we will apply this principle to a specific 
choice of $G$ and $\tilde G$ that is derived from the coupled update rule.

We begin by introducing a standard notion in the analysis of 
high-dimensional Markov chains, cf.\ \cite{Fol79} (note that our indices 
are reversed as compared to the definition in \cite{Fol79}).

\begin{defn}
\label{def:wasserstein}
$(V_{ij})_{i,j\in I}$ is called a \emph{Wasserstein matrix} for a 
transition kernel $G$ on $\bbS$ if
$$
	\delta_j Gf \le \sum_{i\in I}\delta_if\,V_{ij}
$$
for every $j\in I$ and bounded and measurable quasilocal function $f$.
\end{defn}

We now state our general comparison principle.  

\begin{prop}
\label{prop:markovcomp}
Let $G$ and $\tilde G$ be transition kernels on $\bbS$ such that
$\rho G=\rho$ and $\tilde\rho\tilde G=\tilde\rho$, and let $Q_x$ be a 
coupling between the measures $G_x$ and $\tilde G_x$ for every $x\in\bbS$.
Assume that $G$ is quasilocal, and let $V$ be a Wasserstein matrix for 
$G$.  Then we have
$$
	|\rho f-\tilde\rho f| \le
	\sum_{i,j\in I}\delta_if\,N^{(n)}_{ij}
	\outint \tilde\rho(dx)\,Q_x\eta_j +
	\sum_{i,j\in I}\delta_if\,V^n_{ij}\,(\rho\otimes\tilde\rho)\eta_j,
$$
where we defined
$$
	N^{(n)}:=\sum_{k=0}^{n-1} V^k,
$$
for any bounded and measurable quasilocal function $f$ and $n\ge 1$.
\end{prop}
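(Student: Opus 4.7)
The plan is to use the invariance identities $\rho f = \rho G^n f$ and $\tilde\rho f = \tilde\rho \tilde G^n f$ and split
$$
\rho f - \tilde\rho f \;=\; (\rho - \tilde\rho) G^n f \;+\; \tilde\rho (G^n - \tilde G^n) f,
$$
then bound the two pieces separately. The first piece will produce the $V^n$ contribution and the second the $N^{(n)}$ contribution of the asserted inequality.

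For the first piece, iterating the Wasserstein inequality shows that $V^n$ is a Wasserstein matrix for $G^n$, i.e.\ $\delta_j G^n f \le \sum_i \delta_i f\,(V^n)_{ij}$, and $G^n f$ is itself quasilocal because $G$ is a quasilocal kernel and $f$ is quasilocal. A coordinatewise telescoping argument---justified in the infinite-$I$ case by interpolating between $x$ and $y$ through the configurations $y^J x^{I\backslash J}$ for finite $J \uparrow I$ and using quasilocality of $G^n f$ to pass to the limit---gives $|g(x) - g(y)| \le \sum_i \delta_i g\,\eta_i(x_i,y_i)$ for any quasilocal $g$ with $\delta_i g<\infty$. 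Applied to $g = G^n f$ and integrated against $\rho \otimes \tilde\rho$, this yields $|(\rho - \tilde\rho) G^n f| \le \sum_{i,j} \delta_i f\,(V^n)_{ij}\,(\rho \otimes \tilde\rho)\eta_j$, matching the second term of the claim.

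For the second piece I would telescope at the operator level,
$$
G^n - \tilde G^n \;=\; \sum_{k=0}^{n-1} \tilde G^k (G - \tilde G) G^{n-k-1},
$$
and use $\tilde\rho \tilde G^k = \tilde\rho$ to collapse the prefix, leaving
$$
\tilde\rho (G^n - \tilde G^n) f \;=\; \sum_{m=0}^{n-1} \tilde\rho (G - \tilde G) G^m f.
$$
For each $m$, set $h_m := G^m f$ (still quasilocal). The coupling $Q_x$ gives pointwise $|G h_m(x) - \tilde G h_m(x)| \le \sum_i \delta_i h_m\, Q_x \eta_i$. Integrating against $\tilde\rho$---passing to an outer integral since $x \mapsto Q_x \eta_i$ need not be measurable, and pulling the sum over $i$ outside by countable subadditivity of $\outint$---gives $|\tilde\rho (G-\tilde G) h_m| \le \sum_i \delta_i h_m\,\outint \tilde\rho(dx)\,Q_x \eta_i$. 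Inserting the Wasserstein bound $\delta_i h_m \le \sum_j \delta_j f\,(V^m)_{ji}$ and summing $m = 0, \ldots, n-1$ assembles $N^{(n)} = \sum_{m=0}^{n-1} V^m$ and produces the first term of the claim.

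The main technical point is the coordinatewise telescoping estimate in the infinite-dimensional case, where quasilocality is essential for passing to the limit; this is precisely why the hypothesis that $G$ be quasilocal is needed, so that every iterate $G^m f$ appearing in the argument remains quasilocal. The outer integral is a minor but pervasive bookkeeping device arising from possible nonmeasurability of $x\mapsto Q_x\eta_i$; everything else reduces to direct operator algebra and relabeling of indices, so no deeper obstacle arises.
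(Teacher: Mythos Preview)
Your proof is correct and follows essentially the same route as the paper's. The decomposition $(\rho-\tilde\rho)G^n f + \tilde\rho(G^n-\tilde G^n)f$, the operator telescoping $G^n-\tilde G^n=\sum_{k}\tilde G^k(G-\tilde G)G^{n-1-k}$ collapsed via $\tilde\rho\tilde G^k=\tilde\rho$, the coordinatewise Lipschitz bound for quasilocal functions (the paper's Lemma~\ref{lem:coupling}), and the iteration of the Wasserstein matrix are exactly the ingredients the paper uses; the only differences are notational.
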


Theorem \ref{thm:main} will be derived from this result.  Roughly 
speaking, we will design the transition kernel $G$ such that $V=I-W+R$ is 
a Wasserstein matrix; then assumption (\ref{eq:follmer}) implies that the 
second term in Proposition \ref{prop:markovcomp} vanishes as $n\to\infty$, 
and the result of Theorem \ref{thm:main} reduces to some matrix algebra 
(as will be explained below, however, a more complicated argument 
is needed to obtain Theorem \ref{thm:main} in full generality).

To prove Proposition \ref{prop:markovcomp} we require a simple lemma.

\begin{lem}
\label{lem:coupling}
Let $Q$ be a coupling of probability measures $\mu,\nu$ on $\bbS$.  Then
$$
	|\mu f-\nu f|\le \sum_{i\in I}\delta_if\,Q\eta_i
$$
for every bounded and measurable quasilocal function $f$.
\end{lem}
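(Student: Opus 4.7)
The plan is to reduce $|\mu f-\nu f|$ to a pointwise estimate on $|f(x)-f(y)|$ via the coupling $Q$, and then prove that estimate by telescoping one coordinate at a time. Since $Q$ has marginals $\mu$ and $\nu$,
$$
\mu f-\nu f=\int Q(dx\,dy)\,[f(x)-f(y)],
$$
so by the triangle inequality and Tonelli's theorem (all summands being nonnegative) it is enough to prove the pointwise bound
$$
|f(x)-f(y)|\le\sum_{i\in I}\delta_if\,\eta_i(x_i,y_i)
\qquad\text{for every }x,y\in\bbS,
$$
after which integrating against $Q$ delivers the claim.

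The core of the argument is this pointwise bound, first established for $J$-local $f$ with $J=\{i_1,\ldots,i_n\}\in\mathcal{I}$. Build a telescoping sequence $z^{(0)},\ldots,z^{(n)}$ with $z^{(0)}=y$ and with $z^{(k)}$ obtained from $z^{(k-1)}$ by replacing the $i_k$-coordinate by $x_{i_k}$; then $z^{(n)}$ coincides with $x$ on $J$ and with $y$ off $J$, so $J$-locality forces $f(z^{(n)})=f(x)$. Consecutive configurations differ in a single coordinate, so the definition of $\delta_{i_k}f$ gives
$$
|f(x)-f(y)|\le\sum_{k=1}^{n}|f(z^{(k)})-f(z^{(k-1)})|\le\sum_{k=1}^{n}\delta_{i_k}f\,\eta_{i_k}(x_{i_k},y_{i_k})\le\sum_{i\in I}\delta_if\,\eta_i(x_i,y_i).
$$

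The one step that deserves a little care is the extension to general bounded quasilocal $f$, and this is the only nontrivial point in the proof. Apply the previous bound to the $J$-local approximation $g^J:=f_y^J$, which satisfies $g^J(y)=f(y)$ and, since freezing coordinates off $J$ can only reduce variability, $\delta_ig^J\le\delta_if$ for $i\in J$; this yields $|g^J(x)-f(y)|\le\sum_{i\in J}\delta_if\,\eta_i(x_i,y_i)$. By quasilocality, $g^J(x)=f_y^J(x)\to f(x)$ along any increasing sequence $J_1\subseteq J_2\subseteq\cdots$ exhausting $I$, while the right-hand side increases monotonically to $\sum_{i\in I}\delta_if\,\eta_i(x_i,y_i)$. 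Passing to the limit gives the pointwise bound for $f$, and integrating against $Q$ (monotone convergence to handle the interchange of summation and integration) completes the proof.
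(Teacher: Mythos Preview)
Your proof is correct and follows essentially the same approach as the paper: both establish the pointwise bound $|f(x)-f(y)|\le\sum_{i\in I}\delta_if\,\eta_i(x_i,y_i)$ by telescoping one coordinate at a time on a $J$-local approximation $f_y^J$ and then passing to the limit $J\uparrow I$ via quasilocality, after which integration against $Q$ finishes the argument. The only cosmetic difference is that the paper performs the telescoping directly on $f_x^J$ and takes the limit in one step, whereas you first isolate the local case and then approximate; the content is identical.
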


\begin{proof}
Let $J\in\mathcal{I}$.  Enumerate its elements arbitrarily as
$J=\{j_1,\ldots,j_r\}$, and define $J_k=\{j_1,\ldots,j_k\}$ for
$1\le k\le r$ and $J_0=\varnothing$. Then we can evidently estimate
$$
	|f_x^J(z)-f_x^J(\tilde z)| \le
	\sum_{k=1}^r|f_x^J(z^{J_k}\tilde z^{J\backslash J_k})-
	f_x^J(z^{J_{k-1}}\tilde z^{J\backslash J_{k-1}})| \le
	\sum_{j\in J}\delta_jf\,\eta_j(z_j,\tilde z_j).
$$
As $f$ is quasilocal, we can let $J\uparrow I$ to obtain
$$
	|f(z)-f(\tilde z)| \le \sum_{i\in I}\delta_if\,\eta_i(z_i,\tilde z_i).
$$
The result follows readily as $|\mu f-\nu f|\le \int
|f(z)-f(\tilde z)|\,Q(dz,d\tilde z)$.
\end{proof}

We now proceed to the proof of Proposition \ref{prop:markovcomp}.

\begin{proof}[Proof of Proposition \ref{prop:markovcomp}]
We begin by writing
\begin{align*}
	|\rho f-\tilde\rho f| &=
	|\rho G^nf -\tilde\rho\tilde G^nf| \\
	&\le
	\sum_{k=0}^{n-1}|\tilde\rho\tilde G^{n-k-1}G^{k+1}f-
	\tilde\rho\tilde G^{n-k}G^kf| 
	+ |\rho G^nf-\tilde\rho G^nf| \\
	&=
	\sum_{k=0}^{n-1}|\tilde\rho G G^{k}f-
	\tilde\rho\tilde G G^kf| 
	+ |\rho G^nf-\tilde\rho G^nf|.
\end{align*}
As $G$ is assumed quasilocal, $G^kf$ is quasilocal, and thus
Lemma \ref{lem:coupling} yields
\begin{align*}
	|\tilde\rho G G^{k}f-\tilde\rho\tilde G G^kf|
	&\le
	\int\tilde\rho(dx)\,|G_x G^{k}f-\tilde G_x G^kf| \\
	&\le
	\outint\tilde\rho(dx)
	\sum_{j\in I}\delta_jG^kf\,Q_x\eta_j\\
	&\le
	\sum_{i,j\in I}\delta_if\,V^k_{ij}\outint\tilde\rho(dx)\,Q_x\eta_j.
\end{align*}
Similarly, as $\rho\otimes\tilde\rho$ is a coupling of $\rho,\tilde\rho$, 
we obtain by Lemma \ref{lem:coupling} 
$$
	|\rho G^nf-\tilde\rho G^nf| \le
	\sum_{j\in I}\delta_jG^nf\,(\rho\otimes\tilde\rho)\eta_j \le
	\sum_{i,j\in I}\delta_if\,V^n_{ij}\,(\rho\otimes\tilde\rho)\eta_j.
$$
Thus the proof is complete.
\end{proof}

\subsection{Gibbs samplers}
\label{sec:step2}

To put Proposition \ref{prop:markovcomp} to good use, we must construct 
transition kernels $G$ and $\tilde G$ for which $\rho$ and $\tilde\rho$ 
are invariant, and that admit tractable estimates for the quantities in 
the comparison theorem in terms of the coupled update rule 
$(\gamma^J,\tilde\gamma^J,Q^J,\hat Q^J)_{J\in\mathcal{J}}$ and the weights 
$(w_J)_{J\in\mathcal{J}}$.  To this end, we will use a standard 
construction called the \emph{Gibbs sampler}: in each time step, we draw a 
region $J\in\mathcal{J}$ with probability $v_J\propto w_J$, and then apply 
the transition kernel $\gamma^J$ to the current configuration. This 
readily defines a transition kernel $G$ for which $\rho$ is $G$-invariant 
(as $\rho$ is $\gamma^J$-invariant for every $J\in\mathcal{J}$).  The 
construction for $\tilde G$ is identical.  As will be explained below, 
this is not the most natural construction for the proof of our main 
result; however, it will form the basis for further computations.

We fix throughout this section a coupled update rule
$(\gamma^J,\tilde\gamma^J,Q^J,\hat Q^J)_{J\in\mathcal{J}}$ for 
$(\rho,\tilde\rho)$ and weights $(w_J)_{J\in\mathcal{J}}$ satisfying
the assumptions of Theorem \ref{thm:main}.  Let 
$\mathbf{v}=(v_J)_{J\in\mathcal{J}}$ be a sequence of nonnegative weights 
such that $\sum_Jv_J\le 1$.  We define the Gibbs samplers
\begin{align*}
	G_x^{\mathbf{v}}(A) & :=
	\left(
	1-\sum_{J\in\mathcal{J}}v_J\right)\mathbf{1}_A(x)+
	\sum_{J\in\mathcal{J}}
	v_J \int \mathbf{1}_A(z^Jx^{I\backslash J})\,\gamma^J_x(dz^J),\\
	\tilde G_x^{\mathbf{v}}(A) & :=
	\left(
	1-\sum_{J\in\mathcal{J}}v_J\right)\mathbf{1}_A(x)+
	\sum_{J\in\mathcal{J}}
	v_J \int \mathbf{1}_A(z^Jx^{I\backslash J})\,\tilde\gamma^J_x(dz^J).
\end{align*}
Evidently $G^{\mathbf{v}}$ and $\tilde G^{\mathbf{v}}$ are transition 
kernels on $\bbS$, and $\rho G^{\mathbf{v}}=\rho$ and $\tilde\rho\tilde 
G^{\mathbf{v}}=\tilde\rho$ by construction.  To apply Proposition 
\ref{prop:markovcomp}, we must establish some basic properties.

\begin{lem}
\label{lem:quasilocal}
Assume that $\gamma^J$ is quasilocal for every $J\in\mathcal{J}$.
Then $G^{\mathbf{v}}$ is quasilocal.
\end{lem}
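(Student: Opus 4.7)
The plan is to express $G^{\mathbf{v}} f$ as a weighted sum of kernel outputs that can be analyzed individually. Define the ``lifted'' kernel $\bar\gamma^J$ from $\bbS$ to $\bbS$ by $(\bar\gamma^J f)(\xi) := \int f(u^J \xi^{I\setminus J})\,\gamma^J_\xi(du^J)$, so that
\begin{equation*}
 G^{\mathbf{v}} f = \Big(1 - \sum_{J\in\mathcal{J}} v_J\Big)\, f + \sum_{J\in\mathcal{J}} v_J\, \bar\gamma^J f.
\end{equation*}
Since $\|\bar\gamma^J f\|_\infty \le \|f\|_\infty$ and $\sum_J v_J \le 1$, the series converges in the supremum norm; as finite linear combinations and uniform limits of quasilocal functions are easily seen to be quasilocal (via the triangle inequality), it suffices to prove that $\bar\gamma^J f$ is quasilocal for each fixed $J \in \mathcal{J}$.

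To check the latter, fix $y, w \in \bbS$ and a sequence $L_n \in \mathcal{I}$ with $L_n \subseteq L_{n+1}$, $\bigcup_n L_n = I$, and $L_n \supseteq J$ (the net formulation from the paper reduces to such sequences). Introduce the fixed bounded measurable function $h_\infty(u^J) := f(u^J w^{I\setminus J})$ on $\bbS^J$, and decompose
\begin{equation*}
(\bar\gamma^J f)(w^{L_n} y^{I\setminus L_n}) - (\bar\gamma^J f)(w) = I_n + II_n,
\end{equation*}
where
\begin{align*}
 I_n &= \int \bigl[f(u^J w^{L_n\setminus J} y^{I\setminus L_n}) - h_\infty(u^J)\bigr]\, \gamma^J_{w^{L_n} y^{I\setminus L_n}}(du^J),\\
 II_n &= \int h_\infty(u^J)\, \bigl(\gamma^J_{w^{L_n} y^{I\setminus L_n}} - \gamma^J_w\bigr)(du^J).
\end{align*}
The term $II_n$ vanishes by the assumed quasilocality of $\gamma^J$ applied to the fixed function $h_\infty$, which is automatically quasilocal on $\bbS^J$ because $J$ is finite.

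The hard part will be controlling $I_n$, where both the integrand and the integrating measure depend on $n$. The integrand converges pointwise to $0$ by quasilocality of $f$ (applied to $z = u^J w^{I\setminus J}$ with anchor taken from $y$), but the measures $\gamma^J_{w^{L_n} y^{I\setminus L_n}}$ only converge setwise (not in total variation) to $\gamma^J_w$, so dominated convergence is not directly available. I would resolve this by invoking Egorov's theorem on the probability space $(\bbS^J,\gamma^J_w)$: for any $\varepsilon > 0$ there is a measurable set $A_\varepsilon \subseteq \bbS^J$ with $\gamma^J_w(A_\varepsilon^c) < \varepsilon$ on which the integrand converges uniformly to $0$. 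Splitting $I_n$ into the parts over $A_\varepsilon$ and $A_\varepsilon^c$, the first part tends to $0$ by uniform convergence (with mass $\le 1$), while the second is bounded by $2\|f\|_\infty\, \gamma^J_{w^{L_n} y^{I\setminus L_n}}(A_\varepsilon^c)$, whose $\limsup$ equals $2\|f\|_\infty\, \gamma^J_w(A_\varepsilon^c) \le 2\|f\|_\infty\,\varepsilon$ by the setwise convergence (obtained from quasilocality of $\gamma^J$ applied to indicators). Sending $\varepsilon \downarrow 0$ completes the proof.
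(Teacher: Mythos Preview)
Your proof is correct and follows essentially the same route as the paper's. Both reduce to showing that $\bar\gamma^J f$ (the paper's $\gamma^J f^J$) is quasilocal for each fixed $J\in\mathcal{J}$, and both exploit that quasilocality of $\gamma^J$ yields setwise convergence of the measures $\gamma^J_{w^{L_n}y^{I\setminus L_n}}\to\gamma^J_w$, while quasilocality of $f$ yields pointwise convergence of the integrands. The only difference is that the paper handles the combination of these two convergences by citing a black-box result (Royden, Proposition~18, p.~270: if $\mu_n\to\mu$ setwise and $f_n\to f$ pointwise with $\sup_n\|f_n\|_\infty<\infty$, then $\mu_n f_n\to\mu f$), whereas you reprove this fact via the $I_n+II_n$ decomposition and Egorov's theorem. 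Your version is thus more self-contained; you are also slightly more explicit than the paper about why the countable sum over $J$ preserves quasilocality (uniform convergence, since $\sum_J v_J\le 1$), which the paper dismisses as ``evident.''
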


\begin{proof}
Let $f:\bbS\to\bbS$ be a bounded and measurable quasilocal function.
It evidently suffices to show that $\gamma^Jf^J$ is quasilocal for every
$J\in\mathcal{J}$.  To this end, let us fix $J\in\mathcal{J}$, 
$x,z\in\bbS$, and $J_1,J_2,\ldots\in\mathcal{I}$ such that $J_1\subseteq 
J_2\subseteq\cdots$ and $\bigcup_iJ_i=I$.  Then we have
$$
	\gamma^J_{z^{J_i}x^{I\backslash J_i}}\xrightarrow{i\to\infty}\gamma^J_z
	\quad\mbox{setwise}
$$
as $\gamma^J$ is quasilocal. On the other hand, we have
$$
	f^J_{z^{J_i}x^{I\backslash J_i}}\xrightarrow{i\to\infty}f^J_z
	\quad\mbox{pointwise}
$$
as $f$ is quasilocal.
Thus by \cite[Proposition 18, p.\ 270]{Roy88} we obtain
$$
	\gamma^J_{z^{J_i}x^{I\backslash J_i}}f^J_{z^{J_i}x^{I\backslash J_i}}
	\xrightarrow{i\to\infty}
	\gamma^J_{z}f^J_{z}.
$$
As the choice of $x,z$ and $(J_i)_{i\ge 1}$ is arbitrary,
the result follows.
\end{proof}

\begin{lem}
\label{lem:wasserstein}
Assume that $\gamma^J$ is quasilocal for every $J\in\mathcal{J}$, and
define
\begin{align*}
	W_{ij}^{\mathbf{v}}
	 &:= \mathbf{1}_{i=j}\sum_{J\in\mathcal{J}:i\in J}v_J,\\
	R_{ij}^{\mathbf{v}} &:= \sup_{
	\substack{x,z\in\bbS : \\ x^{I\backslash\{j\}} = z^{I\backslash \{j\}}}
	} \frac{1}{\eta_j(x_j,z_j)}\sum_{J\in\mathcal{J}:i\in J}
	v_J\, Q^J_{x,z}\eta_i.
\end{align*}
Then $V^{\mathbf{v}}=I-W^{\mathbf{v}}+R^{\mathbf{v}}$ is a Wasserstein 
matrix for $G^{\mathbf{v}}$.
\end{lem}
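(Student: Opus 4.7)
The plan is a direct computation: fix $j \in I$ and $x,z \in \bbS$ with $x^{I\setminus\{j\}} = z^{I\setminus\{j\}}$ (so the coupling $Q^J_{x,z}$ is available by Definition \ref{defn:coupledupd}(2)), expand $G^{\mathbf{v}}f(x)-G^{\mathbf{v}}f(z)$ from the definition of the Gibbs sampler, and group the contributions of each $J\in\mathcal{J}$ according to whether $j\in J$ or $j\notin J$. Note that $G^{\mathbf{v}}f$ is quasilocal (by Lemma \ref{lem:quasilocal}), so $\delta_j G^{\mathbf{v}}f$ is well defined for any bounded measurable quasilocal $f$.

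First I would handle the two cases separately using the coupling $Q^J_{x,z}$. If $j\in J$ then $x^{I\setminus J}=z^{I\setminus J}$, and
\[
\int f(w^J x^{I\setminus J})\gamma^J_x(dw^J) - \int f(w^J x^{I\setminus J})\gamma^J_z(dw^J)
 = \int \bigl[f(u^J x^{I\setminus J})-f(v^J x^{I\setminus J})\bigr]\, Q^J_{x,z}(du,dv),
\]
where inside the integrand only finitely many coordinates (those in $J$ where $u\ne v$) differ. If $j\notin J$ then $x^J=z^J$ but $x^{I\setminus J}$ and $z^{I\setminus J}$ differ only at the single site $j$, and an analogous coupled expression applies, with the integrand being $f(u^J x^{I\setminus J})-f(v^J z^{I\setminus J})$.

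Next I would apply the pointwise estimate $|f(a)-f(b)|\le \sum_i \delta_i f\,\eta_i(a_i,b_i)$ to the integrands. Since in each case the two arguments of $f$ differ at only finitely many coordinates, this bound follows by the same telescoping argument used in the proof of Lemma \ref{lem:coupling}, without invoking quasilocality of $f$ directly. This yields, after integrating against $Q^J_{x,z}$, a bound of $\sum_{i\in J}\delta_i f\, Q^J_{x,z}\eta_i$ for the $j\in J$ case and $\sum_{i\in J}\delta_i f\, Q^J_{x,z}\eta_i + \delta_j f\,\eta_j(x_j,z_j)$ for the $j\notin J$ case. Combining with the $(1-\sum_J v_J)|f(x)-f(z)|\le (1-\sum_J v_J)\delta_j f\,\eta_j(x_j,z_j)$ coming from the hold-in-place term, the coefficient of $\delta_j f\,\eta_j(x_j,z_j)$ telescopes to $1 - \sum_{J:j\in J}v_J = 1 - W^{\mathbf{v}}_{jj}$, while the remaining terms assemble into $\sum_{i\in I}\delta_i f \sum_{J:i\in J} v_J\, Q^J_{x,z}\eta_i$.

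Finally, dividing by $\eta_j(x_j,z_j)$ and taking the supremum over admissible $x,z$ gives $\delta_j G^{\mathbf{v}}f \le (1-W^{\mathbf{v}}_{jj})\delta_j f + \sum_i \delta_i f\,R^{\mathbf{v}}_{ij} = \sum_i \delta_i f\,V^{\mathbf{v}}_{ij}$, proving the claim. The only thing to be mildly careful about is the bookkeeping of which sites can differ in each case (so that the telescoping bound legitimately applies coordinatewise, using only boundedness of $f$); there is no substantial obstacle, as the constraint $\sum_J v_J\le 1$ ensures $1 - W^{\mathbf{v}}_{jj}\ge 0$ and hence the bound has the claimed nonnegative form $V^{\mathbf{v}}=I-W^{\mathbf{v}}+R^{\mathbf{v}}$.
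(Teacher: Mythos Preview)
Your proposal is correct and follows essentially the same approach as the paper. The paper treats the two cases $j\in J$ and $j\notin J$ uniformly by writing $\gamma_x^J f_x^J=(\gamma_x^J\otimes\delta_{x^{I\backslash J}})f$ and applying Lemma~\ref{lem:coupling} to the product coupling $Q^J_{x,z}\otimes\delta_{x^{I\backslash J}}\otimes\delta_{z^{I\backslash J}}$, whereas you make the case split explicit; the resulting bookkeeping and final bound are identical.
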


\begin{proof}
Let $f:\bbS\to\bbS$ be a bounded and measurable quasilocal function, and 
let $x,z\in\bbS$ be configurations that differ at a single site
$\card\{i\in I:x_i\ne z_i\}=1$.  Note that
$$
	\gamma_x^Jf_x^J = (\gamma_x^J\otimes\delta_{x^{I\backslash J}})f,
	\qquad\quad
	\gamma_z^Jf_z^J = (\gamma_z^J\otimes\delta_{z^{I\backslash J}})f.
$$
As $Q_{x,z}^J$ is a coupling of $\gamma^J_x$ 
and $\gamma^J_z$ by construction, the measure 
$Q_{x,z}^J\otimes\delta_{x^{I\backslash J}}\otimes\delta_{z^{I\backslash 
J}}$ is a coupling of $\gamma_x^J\otimes\delta_{x^{I\backslash J}}$ and
$\gamma_z^J\otimes\delta_{z^{I\backslash J}}$.  Thus Lemma 
\ref{lem:coupling} yields
\begin{align*}
	|\gamma_x^Jf_x^J-\gamma_z^Jf_z^J| 
	&\le
	\sum_{i\in I}\delta_i f\,
	(Q_{x,z}^J\otimes\delta_{x^{I\backslash J}}
	\otimes\delta_{z^{I\backslash J}})\eta_i \\
	&=
	\sum_{i\in J}\delta_i f\,
	Q_{x,z}^J\eta_i +
	\sum_{i\in I\backslash J}\delta_i f\,\eta_i(x_i,z_i).
\end{align*}
In particular, we obtain
\begin{align*}
	&|G^{\mathbf{v}}f(x)-G^{\mathbf{v}}f(z)| \le
	\Bigg(1-\sum_{J\in\mathcal{J}}v_J\Bigg)
	|f(x)-f(z)| +
	\sum_{J\in\mathcal{J}}v_J\,|\gamma_x^Jf_x^J-\gamma_z^Jf_z^J| \\
	&\quad\le
	\Bigg(1-\sum_{J\in\mathcal{J}}v_J\Bigg)
	\sum_{i\in I}\delta_if\,\eta_i(x_i,z_i) 
	+
	\sum_{J\in\mathcal{J}}v_J\Bigg(
	\sum_{i\in J}\delta_i f\,
	Q_{x,z}^J\eta_i +
	\sum_{i\in I\backslash J}\delta_if\,\eta_i(x_i,z_i)
	\Bigg) \\
	&\quad =
	\sum_{i\in I}
	\delta_if\,\{1-W_{ii}^{\mathbf{v}}\}\,\eta_i(x_i,z_i)
	+
	\sum_{i\in I}
	\delta_i f
	\sum_{J\in\mathcal{J}:i\in J}v_J\,Q_{x,z}^J\eta_i.
	\phantom{\Bigg\}}	
\end{align*}
Now suppose that $x^{I\backslash\{j\}}=z^{I\backslash\{j\}}$ (and $x\ne 
z$).  Then by definition
$$
	\sum_{J\in\mathcal{J}:i\in J}v_J\,Q_{x,z}^J\eta_i \le
	R_{ij}^{\mathbf{v}}\,\eta_j(x_j,v_j),	
$$
and we obtain
$$
	\frac{|G^{\mathbf{v}}f(x)-G^{\mathbf{v}}f(z)|}
	{\eta_j(x_j,z_j)} \le
	\delta_jf\,\{1-W_{jj}^{\mathbf{v}}\}
	+
	\sum_{i\in I}
	\delta_i f\,
	R_{ij}^{\mathbf{v}}.
$$
Thus $V^{\mathbf{v}}=I-W^{\mathbf{v}}+R^{\mathbf{v}}$ satisfies
Definition \ref{def:wasserstein}.
\end{proof}

Using Lemmas \ref{lem:quasilocal} and \ref{lem:wasserstein}, we can now 
apply Proposition \ref{prop:markovcomp}.

\begin{cor}
\label{cor:precomp}
Assume that $\gamma^J$ is quasilocal for every $J\in\mathcal{J}$.
Then
$$
	|\rho f-\tilde\rho f| \le
	\sum_{i,j\in I}\delta_if\,N^{\mathbf{v}(n)}_{ij}\,
	a_j^{\mathbf{v}} +
	\sum_{i,j\in I}\delta_if\,(I-W^{\mathbf{v}}+R^{\mathbf{v}})^n_{ij}
	\,(\rho\otimes\tilde\rho)\eta_j
$$
for every $n\ge 1$ and bounded and measurable quasilocal function $f$,
where
$$
	N^{\mathbf{v}(n)}:=\sum_{k=0}^{n-1} 
	(I-W^{\mathbf{v}}+R^{\mathbf{v}})^k
$$
and the coefficients $(a_j^{\mathbf{v}})_{j\in I}$ are defined by
$a_j^{\mathbf{v}} := \sum_{J\in\mathcal{J}:j\in J}v_J\int^*\tilde\rho(dx)\,
\hat Q_x^J\eta_j$.
\end{cor}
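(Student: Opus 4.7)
The plan is to apply Proposition \ref{prop:markovcomp} directly with $G = G^{\mathbf{v}}$ and $\tilde G = \tilde G^{\mathbf{v}}$. Three of the four hypotheses of that proposition are already at hand: invariance $\rho G^{\mathbf{v}} = \rho$ and $\tilde\rho \tilde G^{\mathbf{v}} = \tilde\rho$ is built into the definition of the Gibbs samplers (since each $\gamma^J, \tilde\gamma^J$ leaves the corresponding measure invariant and the samplers are convex combinations of these updates with the identity); quasilocality of $G^{\mathbf{v}}$ is furnished by Lemma \ref{lem:quasilocal}; and the Wasserstein matrix is supplied by Lemma \ref{lem:wasserstein} as $V^{\mathbf{v}} = I - W^{\mathbf{v}} + R^{\mathbf{v}}$. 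The only ingredient I must produce is a coupling $Q_x$ of $G^{\mathbf{v}}_x$ and $\tilde G^{\mathbf{v}}_x$ for which the coefficient $\int^* \tilde\rho(dx)\, Q_x \eta_j$ appearing in Proposition \ref{prop:markovcomp} is controlled by $a_j^{\mathbf{v}}$.

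The natural coupling mirrors the block structure of the samplers themselves. I would define $Q_x$ by selecting a region $J \in \mathcal{J}$ with probability $v_J$, or keeping $x$ fixed with probability $1 - \sum_J v_J$; conditional on $J$, draw a pair $(z^J, \tilde z^J)$ from $\hat Q^J_x$ and emit the configuration pair $(z^J x^{I\backslash J},\, \tilde z^J x^{I\backslash J})$; on the no-update event, set both components equal to $x$. Since $\hat Q^J_x$ is a coupling of $\gamma^J_x$ and $\tilde\gamma^J_x$ by Definition \ref{defn:coupledupd}(3), the first and second marginals of $Q_x$ reproduce $G^{\mathbf{v}}_x$ and $\tilde G^{\mathbf{v}}_x$ term by term.

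Evaluating $Q_x \eta_j$ is then routine: the diagonal piece contributes $0$ because the two components agree; for $J \not\ni j$ the two components again coincide at coordinate $j$ and contribute $0$; and for $J \ni j$ the contribution is $v_J\, \hat Q^J_x \eta_j$. Summing yields
$$
   Q_x \eta_j \;=\; \sum_{J \in \mathcal{J}:\, j \in J} v_J\, \hat Q^J_x \eta_j.
$$
Applying countable subadditivity of the outer integral (see \cite{VW96}) then gives
$$
   \outint \tilde\rho(dx)\, Q_x \eta_j \;\le\; \sum_{J \in \mathcal{J}:\, j \in J} v_J \outint \tilde\rho(dx)\, \hat Q^J_x \eta_j \;=\; a_j^{\mathbf{v}},
$$
and substituting into Proposition \ref{prop:markovcomp} produces exactly the claimed inequality with $N^{\mathbf{v}(n)} = \sum_{k=0}^{n-1} (V^{\mathbf{v}})^k$.

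The one delicate point, and what I expect to be the only real obstacle, is bookkeeping the non-measurability of $x \mapsto \hat Q^J_x$ and hence of $x \mapsto Q_x \eta_j$. Because of this, the intermediate integrals must be outer integrals throughout, and the exchange between the sum over $J$ and the integral must go via subadditivity of $\outint$ rather than by Fubini. This is precisely the reason Proposition \ref{prop:markovcomp} was already stated with $\outint$, so once the outer-integral inequality above is recorded, no further technical work is required and the corollary follows immediately.
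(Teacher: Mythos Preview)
Your proposal is correct and follows essentially the same approach as the paper: apply Proposition~\ref{prop:markovcomp} with $G=G^{\mathbf v}$, $\tilde G=\tilde G^{\mathbf v}$, $V=I-W^{\mathbf v}+R^{\mathbf v}$, invoking Lemmas~\ref{lem:quasilocal} and~\ref{lem:wasserstein}, and construct exactly the same mixture coupling $Q_x$ from the $\hat Q^J_x$. The paper simply records the coupling formula and says ``it is easily verified,'' whereas you spell out the computation of $Q_x\eta_j$ and the outer-integral subadditivity step; this is a correct and welcome elaboration, not a different argument.
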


\begin{proof}
Let $G=G^{\mathbf{v}}$, $\tilde G=\tilde G^{\mathbf{v}}$, 
$V=I-W^{\mathbf{v}}+R^{\mathbf{v}}$ in Proposition 
\ref{prop:markovcomp}.  The requisite assumptions are verified by Lemmas 
\ref{lem:quasilocal} and \ref{lem:wasserstein}, and
it remains to show that there exists a coupling 
$Q_x$ of $G_x$ and $\tilde G_x$ such that
$\int^*\tilde\rho(dx)\,Q_x\eta_j\le a_j$ for every $j\in I$.  But choosing 
$$
	Q_xg :=
	\Bigg(
	1-\sum_{J\in\mathcal{J}}v_J\Bigg)
	g(x,x)+
	\sum_{J\in\mathcal{J}}
	v_J \int\hat Q_x^J(dz^J,d\tilde z^J)\,
	g(z^Jx^{I\backslash J},\tilde z^Jx^{I\backslash J}),
$$
it is easily verified that $Q_x$ satisfies the necessary properties.
\end{proof}

In order for the construction of the Gibbs sampler to make sense, the 
weights $v_J$ must be probabilities.  This imposes the requirement
$\sum_Jv_J\le 1$.  If we were to assume that $\sum_Jw_J\le 1$, we could 
apply Corollary \ref{cor:precomp} with $v_J=w_J$.  Then assumption 
(\ref{eq:follmer}) guarantees that the second term in Corollary 
\ref{cor:precomp} vanishes as $n\to\infty$, which yields
$$
	|\rho f-\tilde\rho f| \le
	\sum_{i,j\in I}\delta_if\,N_{ij}\,
	a_j\quad\mbox{with}\quad
	N := \sum_{k=0}^\infty (I-W+R)^k.
$$
The proof of Theorem \ref{thm:main} would now be complete after we
establish the identity
$$
	N = \sum_{k=0}^\infty (I-W+R)^k =
	\sum_{k=0}^\infty (W^{-1}R)^k\,W^{-1} = DW^{-1}.
$$
This straightforward matrix identity will be proved in the next 
section.  The assumption that the weights $w_J$ are summable is 
restrictive, however, when $I$ is infinite: in Theorem \ref{thm:main} we 
only assume that $W_{ii}\le 1$ for all $i$, so we evidently cannot set
$v_J=w_J$.

When the weights $w_j$ are not summable, it is not natural to interpret 
them as probabilities.  In this setting, a much more natural construction 
would be to consider a \emph{continuous time} counterpart of the Gibbs 
sampler called \emph{Glauber dynamics}.  To define this process, one 
attaches to each region $J\in\mathcal{J}$ an independent Poisson process 
with rate $w_J$, and applies the transition kernel $\gamma^J$ at every 
jump time of the corresponding Poisson process.  Thus $w_J$ does not 
represent the probability of selecting the region $J$ in one time step, 
but rather the frequency with which region $J$ is selected in continuous 
time.  Once this process has been defined, one would choose the transition 
kernel $G$ to be the transition semigroup of the continuous time process 
on any fixed time interval.  Proceeding with this construction we expect, 
at least formally, to obtain Theorem \ref{thm:main} under the stated 
assumptions.

Unfortunately, there are nontrivial technical issues involved in 
implementing this approach: it is not evident \emph{a priori} that the 
continuous time construction defines a well-behaved Markov semigroup, so 
that it is unclear when the above program can be made rigorous.  The 
existence of a semigroup has typically been established under more 
restrictive assumptions than we have imposed in the present setting 
\cite{Lig05}. In order to circumvent such issues, we will proceed by an 
alternate route.  Formally, the Glauber dynamics can be obtained by an 
appropriate scaling limit of discrete time Gibbs samplers.  We will also 
utilize this scaling, but instead of applying Proposition 
\ref{prop:markovcomp} to the limiting dynamics we will take the scaling 
limit directly in Corollary \ref{cor:precomp}.  Thus, while our intuition 
comes from the continuous time setting, we avoid some technicalities 
inherent in the construction of the limit dynamics.  Instead, we now face 
the problem of taking limits of powers of infinite matrices.  The 
requisite matrix algebra will be worked out in the following section.

\begin{remark}
Let us briefly sketch how the previous results can be sharpened to obtain 
a \emph{nonlinear} comparison theorem that could lead to sharper bounds in 
some situations.  Assume for simplicity that $\sum_Jw_J\le 1$.  Then 
$V=I-W+R$ is a Wasserstein matrix for $G$ by Lemma \ref{lem:wasserstein}.
Writing out the definitions, this means
$\delta(Gf)\le \delta(f)V$ where
$$
	(\beta V)_j
	=
	\sum_{i\in I}\beta_i
	\sup_{
	\substack{x,z\in\bbS : \\ x^{I\backslash\{j\}} = z^{I\backslash \{j\}}}
	} 
	\left\{
	\mathbf{1}_{i=j}\left(1-\sum_{J:i\in J}w_J\right)
	+
	\frac{1}{\eta_j(x_j,z_j)}\sum_{J:i\in J}
	w_J\, Q^J_{x,z}\eta_i
	\right\}
$$
(here we interpret $\beta=(\beta_i)_{i\in I}$ and 
$\delta(f)=(\delta_if)_{i\in I}$ as row vectors).  However, from the 
proof of Lemma \ref{lem:wasserstein} we even obtain the sharper bound 
$\delta(Gf)\le\mathsf{V}[\delta(f)]$ where
$$
	\mathsf{V}[\beta]_j
	:=
	\sup_{
	\substack{x,z\in\bbS : \\ x^{I\backslash\{j\}} = z^{I\backslash \{j\}}}
	} 
	\sum_{i\in I}\beta_i
	\left\{
	\mathbf{1}_{i=j}\left(1-\sum_{J:i\in J}w_J\right)
	+
	\frac{1}{\eta_j(x_j,z_j)}\sum_{J:i\in J}
	w_J\, Q^J_{x,z}\eta_i
	\right\}
$$
is defined with the supremum over configurations outside the sum.
The nonlinear operator $\mathsf{V}$ can now be used much in the same way 
as the Wasserstein matrix $V$.  In particular, following the identical 
proof as for Proposition \ref{prop:markovcomp}, we immediately obtain
$$
	|\rho f-\tilde\rho f| \le
	\sum_{j\in I}\sum_{k=0}^{n-1}\mathsf{V}^k[\delta(f)]_j
	\outint\tilde\rho(dx)\,Q_x\eta_j +
	\sum_{j\in I}\mathsf{V}^n[\delta(f)]_j\,
	(\rho\otimes\tilde\rho)\eta_j,
$$
where $\mathsf{V}^k$ denotes the $k$th iterate of the nonlinear 
operator $\mathsf{V}$.  Proceeding along these lines, one can develop 
nonlinear comparison theorems under Dobrushin-Shlosman type conditions 
(see the discussion in section \ref{sec:altcond}).  The nonlinear 
expressions are somewhat difficult to handle, however, and we do not 
develop this idea further in this paper.
\end{remark}

\subsection{Proof of Theorem \ref{thm:main}}
\label{sec:step3}

Throughout this section, we work under the assumptions of Theorem 
\ref{thm:main}.  The main idea of the proof is the following continuous
scaling limit of Corollary \ref{cor:precomp}.

\begin{prop}
\label{prop:contcomp}
Let $t>0$.  Define the matrices
$$
	N := 
	\sum_{k=0}^\infty 
	(I-W+R)^k, \qquad\quad
	V^{[t]} :=
	\sum_{k=0}^\infty
	\frac{t^ke^{-t}}{k!}\,
	(I-W+R)^k.
$$
Then we have, under the assumptions of Theorem \ref{thm:main},
$$
	|\rho f-\tilde\rho f| \le
	\sum_{i,j\in I}\delta_if\,N_{ij}\,
	a_j +
	\sum_{i,j\in I}\delta_if\,V^{[t]}_{ij}
	\,(\rho\otimes\tilde\rho)\eta_j
$$
for every bounded and measurable quasilocal function $f$ such 
that $\delta_if<\infty$ for all $i\in I$.
\end{prop}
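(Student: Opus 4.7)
My plan is to deduce Proposition~\ref{prop:contcomp} from Corollary~\ref{cor:precomp} by a continuous-time scaling carried out on finite sub-covers. The naive choice $v_J=w_J$ in Corollary~\ref{cor:precomp} is blocked by the implicit constraint $\sum_J v_J \le 1$, which can fail when $\sum_J w_J=\infty$ (Theorem~\ref{thm:main} only gives $W_{ii}\le 1$ for each $i$). I therefore localize to a finite $\mathcal{J}_0 \subseteq \mathcal{J}$, take a scaling limit in the number of Gibbs-sampler steps to simulate continuous time restricted to $\mathcal{J}_0$, and finally exhaust $\mathcal{J}$ by a nested sequence of such $\mathcal{J}_0$'s.

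Fix a finite $\mathcal{J}_0$ and choose $n$ large enough that $\sum_{J\in\mathcal{J}_0} tw_J/n \le 1$. Applying Corollary~\ref{cor:precomp} with $v_J = tw_J/n$ for $J\in\mathcal{J}_0$ and $v_J = 0$ otherwise, and writing $W^{\mathcal{J}_0},R^{\mathcal{J}_0},a^{\mathcal{J}_0}$ for the matrices and vector from Theorem~\ref{thm:main} with sums restricted to $J\in\mathcal{J}_0$, one has $W^{\mathbf{v}} = (t/n)W^{\mathcal{J}_0}$, $R^{\mathbf{v}} = (t/n)R^{\mathcal{J}_0}$, $a^{\mathbf{v}} = (t/n)a^{\mathcal{J}_0}$. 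Corollary~\ref{cor:precomp} then yields a bound involving powers of $A_n := I + (t/n) M^{\mathcal{J}_0}$, where $M^{\mathcal{J}_0} := -W^{\mathcal{J}_0} + R^{\mathcal{J}_0}$. Since $W^{\mathcal{J}_0}$ and $R^{\mathcal{J}_0}$ are nontrivial only on the finite index set $\bigcup_{J\in\mathcal{J}_0} J$, passing to $n\to\infty$ reduces to a finite-dimensional matrix-exponential limit, and standard facts give $A_n^n \to e^{tM^{\mathcal{J}_0}}$ and $(t/n)\sum_{k=0}^{n-1} A_n^k \to \int_0^t e^{sM^{\mathcal{J}_0}}\,ds$ entrywise.

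Because $I$ commutes with $M^{\mathcal{J}_0}$, I then Poissonize the exponential,
\[
e^{tM^{\mathcal{J}_0}} = e^{-t}\,e^{t(I - W^{\mathcal{J}_0}+R^{\mathcal{J}_0})} = \sum_{k=0}^\infty \frac{t^k e^{-t}}{k!}\,(I - W^{\mathcal{J}_0}+R^{\mathcal{J}_0})^k =: V^{[t],\mathcal{J}_0},
\]
and Tonelli (every summand is nonnegative) together with the elementary bound $\int_0^t \frac{s^k e^{-s}}{k!}\,ds \le 1$ gives $\int_0^t e^{sM^{\mathcal{J}_0}}\,ds \le N^{\mathcal{J}_0} := \sum_{k=0}^\infty (I - W^{\mathcal{J}_0}+R^{\mathcal{J}_0})^k$. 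This yields the finite-$\mathcal{J}_0$ bound
\[
|\rho f - \tilde\rho f| \le \sum_{i,j\in I}\delta_i f\, N^{\mathcal{J}_0}_{ij}\, a^{\mathcal{J}_0}_j + \sum_{i,j\in I}\delta_i f\, V^{[t],\mathcal{J}_0}_{ij}\,(\rho\otimes\tilde\rho)\eta_j.
\]
I then exhaust $\mathcal{J}$ by a nested sequence $\mathcal{J}_0^{(1)}\subseteq\mathcal{J}_0^{(2)}\subseteq\cdots$ with $\bigcup_m \mathcal{J}_0^{(m)} = \mathcal{J}$: the off-diagonal entries of $R^{\mathcal{J}_0}$ and the components of $a^{\mathcal{J}_0}$ converge monotonically upward, while the diagonal entries $1 - W^{\mathcal{J}_0}_{ii} + R^{\mathcal{J}_0}_{ii}$ converge pointwise but not monotonically. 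The uniform domination $I - W^{\mathcal{J}_0}+R^{\mathcal{J}_0} \le I + R$ (valid since $W^{\mathcal{J}_0}_{ii}\ge 0$ and $R^{\mathcal{J}_0}\le R$ entrywise) lets me apply entrywise dominated convergence to the path expansion of $(I - W^{\mathcal{J}_0}+R^{\mathcal{J}_0})^k$, whence dominated convergence in the sum over $k$ yields $V^{[t],\mathcal{J}_0} \to V^{[t]}$ and $N^{\mathcal{J}_0} \to N$ entrywise; a final dominated/Fatou passage in the sums over $i,j$ delivers the Proposition.

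The main obstacle is this last limit: the matrix $I - W^{\mathcal{J}_0}+R^{\mathcal{J}_0}$ is \emph{not} monotone in $\mathcal{J}_0$ (enlarging $\mathcal{J}_0$ increases both $W^{\mathcal{J}_0}$ and $R^{\mathcal{J}_0}$, moving the diagonal in opposite directions), so monotone convergence is unavailable and one must work with the dominating matrix $I + R$. Making this rigorous requires tracking the finiteness of the dominating series $(I+R)^k$, which has to be extracted from assumption~\eqref{eq:follmer}. This bookkeeping is precisely the ``limits of powers of infinite matrices'' advertised in the preceding discussion.
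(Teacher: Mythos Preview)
Your proposal is correct and follows essentially the same route as the paper: restrict to a finite sub-cover, take the continuous-time scaling limit of Corollary~\ref{cor:precomp}, then exhaust $\mathcal{J}$ using the domination $I-W^{\mathcal{J}_0}+R^{\mathcal{J}_0}\le I+R\le I+(I-W+R)$ together with assumption~\eqref{eq:follmer}. The paper phrases the $n\to\infty$ step probabilistically (Binomial$\to$Poisson plus uniform integrability, via Lemma~\ref{lem:exponentialest}), whereas you phrase it as a matrix-exponential limit followed by the Poissonization $e^{tM}=e^{-t}\sum_k\frac{t^k}{k!}(I-W^{\mathcal{J}_0}+R^{\mathcal{J}_0})^k$; these are two descriptions of the same computation.

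One small imprecision worth flagging: your claim that the $n\to\infty$ limit is ``finite-dimensional'' is not literally true, since the rows of $R^{\mathcal{J}_0}$ indexed by $i\in\bigcup\mathcal{J}_0$ need not have finite support (only the rows with $i\notin\bigcup\mathcal{J}_0$ vanish). Entrywise convergence $A_n^n\to e^{tM^{\mathcal{J}_0}}$ does hold, because any path in $(M^{\mathcal{J}_0})^k$ must stay in the finite set $\bigcup\mathcal{J}_0$ at every intermediate step; but passing from entrywise convergence to convergence of $\sum_j(A_n^n)_{ij}c_j$ over an infinite index set $j$ still requires a domination argument. This is exactly the same Lemma~\ref{lem:exponentialest}-type bound you already invoke for the $\mathcal{J}_0\uparrow\mathcal{J}$ step, so no new idea is needed---just be aware that the ``standard facts'' you cite for $n\to\infty$ already presuppose the finiteness coming from~\eqref{eq:follmer}, not merely finiteness of $\mathcal{J}_0$.
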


\begin{proof} 
Without loss of generality, we will assume throughout the proof that $f$ 
is a local function (so that only finitely many $\delta_if$ are nonzero). 
The extension to quasilocal $f$ follows readily by applying the local 
result to $f_x^J$ and letting $J\uparrow I$ as in the proof of Lemma 
\ref{lem:coupling}.

As the cover $\mathcal{J}$ is at most countable (because
$\mathcal{I}$ is countable), we can enumerate its elements
arbitrarily as $\mathcal{J}=\{J_1,J_2,\ldots\}$. 
Define the weights 
$\mathbf{v}^{r}=(v^{r}_J)_{J\in\mathcal{J}}$ as
$$
	v^{r}_J :=
	\left\{
	\begin{array}{ll}
	w_J & \mbox{when }J=J_k\mbox{ for }k\le r,\\
	0 & \mbox{otherwise}.
	\end{array}
	\right.
$$
For every $r\in\mathbb{N}$, the weight vector $u\mathbf{v}^r$ evidently 
satisfies $\sum_J uv^r_J\le 1$ for all $u>0$ sufficiently small (depending 
on $r$).  The main idea of the proof is to apply Corollary 
\ref{cor:precomp} to the weight vector $\mathbf{v}=(t/n)\mathbf{v}^r$, 
then let $n\to\infty$, and finally $r\to\infty$.

Let us begin by considering the second term in Corollary 
\ref{cor:precomp}.  We can write
\begin{align*}
	(I-W^{(t/n)\mathbf{v}^r}+R^{(t/n)\mathbf{v}^r})^n &=
	\bigg(\bigg(1-\frac{t}{n}\bigg)I+
	\frac{t}{n}(I-W^{\mathbf{v}^r}+R^{\mathbf{v}^r})\bigg)^n \\
	&=
	\sum_{k=0}^n {n\choose k}\bigg(1-\frac{t}{n}\bigg)^{n-k}
	\bigg(\frac{t}{n}\bigg)^k\,
	(I-W^{\mathbf{v}^r}+R^{\mathbf{v}^r})^k \\
	&= \mathbf{E}[(I-W^{\mathbf{v}^r}+R^{\mathbf{v}^r})^{Z_n}],
	\phantom{\sum^n}
\end{align*}
where we defined the Binomial random variables
$Z_n\sim\mathrm{Bin}(n,t/n)$.  The random variables $Z_n$ converge weakly 
as $n\to\infty$ to the Poisson random variable $Z_\infty\sim\mathrm{Pois}(t)$.
To take the limit of the above expectation, we need a simple estimate that 
will be useful in the sequel.

\begin{lem}
\label{lem:exponentialest}
Let $(c_j)_{j\in I}$ be any nonnegative vector.  Then
$$
	\sum_{j\in I}(I-W^{\mathbf{v}^r}+R^{\mathbf{v}^r})^k_{ij}\,c_j 
	\le 2^k\max_{0\le\ell\le k}
	\sum_{j\in I}(I-W+R)^\ell_{ij}\,c_j
$$
for every $i\in I$ and $k\ge 0$.
\end{lem}

\begin{proof}
As $R^{\mathbf{v}}$ is nondecreasing in $\mathbf{v}$ we obtain the
elementwise estimate
$$
	I-W^{\mathbf{v}^r}+R^{\mathbf{v}^r} \le
	I+R \le I + (I-W+R),
$$
where we have used $W_{ii}\le 1$.  We therefore have
$$
	\sum_{j\in I}(I-W^{\mathbf{v}^r}+R^{\mathbf{v}^r})^k_{ij}\,c_j 
	\le \sum_{j\in I}(I+\{I-W+R\})^k_{ij}\,c_j =
	\sum_{\ell=0}^k {k\choose\ell}
	\sum_{j\in I}(I-W+R)^\ell_{ij}\,c_j,
$$
and the proof is easily completed.
\end{proof}

Define the random variables
$$
	X_n = g(Z_n)\quad\mbox{with}\quad
	g(k) = 
	\sum_{i,j\in I}\delta_if\,
	(I-W^{\mathbf{v}^r}+R^{\mathbf{v}^r})^{k}_{ij}\,
	(\rho\otimes\tilde\rho)\eta_j.
$$
Then $X_n\to X_\infty$ weakly by the continuous mapping theorem.
On the other hand, applying Lemma \ref{lem:exponentialest} with 
$c_j=(\rho\otimes\tilde\rho)\eta_j$ we estimate $g(k)\le C2^k$
for some finite constant $C<\infty$ and all $k\ge 0$, where we have used 
assumption (\ref{eq:follmer}) and that $f$ is local.  As
$$
	\limsup_{u\to\infty}\sup_{n\ge 1}
	\mathbf{E}[2^{Z_n}\mathbf{1}_{2^{Z_n}\ge u}] \le
	\lim_{u\to\infty}u^{-1}\sup_{n\ge 1}\mathbf{E}[4^{Z_n}]  =
	\lim_{u\to\infty}u^{-1}e^{3t}=0,
$$
it follows that the random variables $(X_n)_{n\ge 1}$ are uniformly 
integrable.  We therefore conclude that 
$\mathbf{E}[X_n]\to\mathbf{E}[X_\infty]$ as $n\to\infty$
(cf.\ \cite[Lemma 4.11]{Kal02}).  In particular,
$$
	\lim_{n\to\infty}
	\sum_{i,j\in I}\delta_if\,
	(I-W^{(t/n)\mathbf{v}^r}+R^{(t/n)\mathbf{v}^r})^{n}_{ij}\,
	(\rho\otimes\tilde\rho)\eta_j =
	\sum_{i,j\in I}\delta_if\,
	V^{r[t]}_{ij}\,
	(\rho\otimes\tilde\rho)\eta_j,
$$
where
$$
	V^{r[t]} = 
	\sum_{k=0}^\infty\frac{t^ke^{-t}}{k!}
	(I-W^{\mathbf{v}^r}+R^{\mathbf{v}^r})^{k}.
$$
We now let $r\to\infty$.  Note that $W^{\mathbf{v}^r}\uparrow W$ 
and $R^{\mathbf{v}^r}\uparrow R$ elementwise and, arguing as in the 
proof of Lemma \ref{lem:exponentialest}, we have 
$I-W^{\mathbf{v}^r}+R^{\mathbf{v}^r}\le I+(I-W+R)$ elementwise where
$$
	\sum_{k=0}^\infty
	\sum_{i,j\in I}
	\frac{t^ke^{-t}}{k!}\,
	\delta_if\,
	\{I+(I-W+R)\}^{k}_{ij}\,
	(\rho\otimes\tilde\rho)\eta_j \le
	e^t
	\sup_{\ell\ge 0}
	\sum_{i,j\in I}\delta_if\,
	(I-W+R)^{\ell}_{ij}\,
	(\rho\otimes\tilde\rho)\eta_j
$$
is finite by assumption (\ref{eq:follmer}) and as $f$ is local.
We therefore obtain
$$
	\lim_{r\to\infty}
	\lim_{n\to\infty}
	\sum_{i,j\in I}\delta_if\,
	(I-W^{(t/n)\mathbf{v}^r}+R^{(t/n)\mathbf{v}^r})^{n}_{ij}\,
	(\rho\otimes\tilde\rho)\eta_j =
	\sum_{i,j\in I}\delta_if\,
	V^{[t]}_{ij}\,
	(\rho\otimes\tilde\rho)\eta_j
$$
by dominated convergence.
That is, the second term in Corollary \ref{cor:precomp} with 
$\mathbf{v}=(t/n)\mathbf{v}^r$ converges as $n\to\infty$ and $r\to\infty$ 
to the second term in statement of the present result.

It remains to establish the corresponding conclusion for the first term in
Corollary \ref{cor:precomp}, which proceeds much along the same lines.
We begin by noting that
\begin{align*}
	&\frac{1}{n}\sum_{k=0}^{n-1}
	(I-W^{(t/n)\mathbf{v}^r}+R^{(t/n)\mathbf{v}^r})^k =
	\frac{1}{n}\sum_{k=0}^{n-1}
	\bigg(\bigg(1-\frac{t}{n}\bigg)I+
	\frac{t}{n}(I-W^{\mathbf{v}^r}+R^{\mathbf{v}^r})\bigg)^k \\
	&\qquad=
	\frac{1}{n}
	\sum_{k=0}^{n-1}
	\sum_{\ell=0}^k {k\choose\ell}\bigg(1-\frac{t}{n}\bigg)^{k-\ell}
	\bigg(\frac{t}{n}\bigg)^\ell\,
	(I-W^{\mathbf{v}^r}+R^{\mathbf{v}^r})^\ell \\
	&\qquad=
	\sum_{\ell=0}^{n-1}
	p_\ell^{(n)}
	(I-W^{\mathbf{v}^r}+R^{\mathbf{v}^r})^\ell,
\end{align*}
where we have defined
$$
	p_\ell^{(n)} = 
	\frac{1}{n}
	\sum_{k=\ell}^{n-1}
	{k\choose\ell}\bigg(1-\frac{t}{n}\bigg)^{k-\ell}
	\bigg(\frac{t}{n}\bigg)^\ell =
	\frac{1}{t}
	\int_{\ell t/n}^t
	{\lfloor sn/t\rfloor\choose\ell}
	\bigg(1-\frac{t}{n}\bigg)^{\lfloor sn/t\rfloor-\ell}
	\bigg(\frac{t}{n}\bigg)^\ell\,ds
$$
for $\ell<n$.  An elementary computation yields
$$
	\sum_{\ell=0}^{n-1}p_\ell^{(n)}=1
	\qquad\mbox{and}\qquad
	p_\ell^{(n)}\xrightarrow{n\to\infty}
	p_\ell^{(\infty)} = \frac{1}{t}\int_0^t\frac{s^\ell e^{-s}}{\ell!}\,ds.
$$
We can therefore introduce $\{0,1,\ldots\}$-valued random variables $Y_n$ 
with $\mathbf{P}[Y_n=\ell]=p_\ell^{(n)}$ for $\ell<n$, and we have shown
above that $Y_n\to Y_\infty$ weakly and that
$$
	\frac{1}{n}\sum_{k=0}^{n-1}
	(I-W^{(t/n)\mathbf{v}^r}+R^{(t/n)\mathbf{v}^r})^k =
	\mathbf{E}[(I-W^{\mathbf{v}^r}+R^{\mathbf{v}^r})^{Y_n}].
$$
The first term in Corollary 
\ref{cor:precomp} with $\mathbf{v}=(t/n)\mathbf{v}^r$ can be written as
$$
	\sum_{i,j\in I}\delta_if 
	\sum_{k=0}^{n-1}(I-W^{(t/n)\mathbf{v}^r}+R^{(t/n)\mathbf{v}^r})^k_{ij}\,
	a_j^{(t/n)\mathbf{v}^r} =
	t\,\mathbf{E}[h(Y_n)],
$$
where we have defined
$$
	h(k)=
	\sum_{i,j\in I}\delta_if \,
	(I-W^{\mathbf{v}^r}+R^{\mathbf{v}^r})^{k}_{ij}\,
	a_j^{\mathbf{v}^r}.
$$
We now proceed essentially as above.   We can assume without loss of
generality that
$$
	\sup_{\ell\ge 0}
	\sum_{i,j\in I}\delta_if \,
	(I-W+R)^{\ell}_{ij}\,
	a_j<\infty, 
$$ 
as otherwise the right-hand side in the statement of the present result is 
infinite and the estimate is trivial.  It consequently follows from Lemma 
\ref{lem:exponentialest} that $h(k)\le C2^k$ for some finite constant 
$C<\infty$ and all $k\ge 0$.  A 
similar computation as was done above shows that $(h(Y_n))_{n\ge 0}$ is 
uniformly integrable, and therefore 
$\mathbf{E}[h(Y_n)]\to\mathbf{E}[h(Y_\infty)]$.  In particular, the first 
term in Corollary \ref{cor:precomp} with $\mathbf{v}=(t/n)\mathbf{v}^r$ 
converges as $n\to\infty$ to
$$
	\lim_{n\to\infty}
	\sum_{i,j\in I}\delta_if 
	\sum_{k=0}^{n-1}(I-W^{(t/n)\mathbf{v}^r}+R^{(t/n)\mathbf{v}^r})^k_{ij}\,
	a_j^{(t/n)\mathbf{v}^r} =
	\sum_{i,j\in I}\delta_if \,
	N^r_{ij}\,
	a_j^{\mathbf{v}^r},
$$
where
$$
	N^r = \sum_{k=0}^\infty \int_0^t \frac{s^ke^{-s}}{k!}\,ds~
	(I-W^{\mathbf{v}^r}+R^{\mathbf{v}^r})^{k}.
$$
Similarly, letting $r\to\infty$ and repeating exactly the arguments used 
above for the second term of Corollary \ref{cor:precomp}, we obtain by 
dominated convergence
$$
	\lim_{r\to\infty}
	\lim_{n\to\infty}
	\sum_{i,j\in I}\delta_if 
	\sum_{k=0}^{n-1}(I-W^{(t/n)\mathbf{v}^r}+R^{(t/n)\mathbf{v}^r})^k_{ij}\,
	a_j^{(t/n)\mathbf{v}^r} =
	\sum_{i,j\in I}\delta_if \,
	\tilde N_{ij}\,
	a_j,
$$
where
$$
	\tilde N = \sum_{k=0}^\infty \int_0^t \frac{s^ke^{-s}}{k!}\,ds~
	(I-W+R)^{k}.
$$
To conclude, we have shown that applying Corollary
\ref{cor:precomp} to the weight vector $\mathbf{v}=(t/n)\mathbf{v}^r$ and 
taking the limit as $n\to\infty$ and $r\to\infty$, respectively, yields 
the estimate
$$
	|\rho f-\tilde\rho f| \le
	\sum_{i,j\in I}\delta_if\,\tilde N_{ij}\,
	a_j +
	\sum_{i,j\in I}\delta_if\,V^{[t]}_{ij}
	\,(\rho\otimes\tilde\rho)\eta_j.
$$
It remains to note that $t^ke^{-t}/k!$ is the density of a Gamma 
distribution (with shape $k+1$ and scale $1$), so 
$\int_0^ts^ke^{-s}/k!\,ds\le 1$ and thus $\tilde N\le N$
elementwise.
\end{proof}

We can now complete the proof of Theorem \ref{thm:main}.

\begin{proof}[Proof of Theorem \ref{thm:main}]
Once again, we will assume without loss of generality that $f$ 
is a local function (so that only finitely many $\delta_if$ are nonzero). 
The extension to quasilocal $f$ follows readily by localization as in the 
proof of Lemma \ref{lem:coupling}.

We begin by showing that the second term in Proposition 
\ref{prop:contcomp} vanishes as $t\to\infty$.  Indeed, 
for any $n\ge 0$, we can evidently estimate the second term as
\begin{align*}
	&\sum_{k=0}^\infty \frac{t^ke^{-t}}{k!}
	\sum_{i,j\in I}\delta_if\,
	(I-W+R)^k_{ij}
	\,(\rho\otimes\tilde\rho)\eta_j  
	\\ &\qquad
	\mbox{}\le
	\sup_{\ell\ge 0}
	\sum_{i,j\in I}\delta_if\,
	(I-W+R)^\ell_{ij}
	\,(\rho\otimes\tilde\rho)\eta_j 
	~\sum_{k=0}^n \frac{t^ke^{-t}}{k!}
	\\ &\qquad\quad
	\mbox{}+
	\sup_{\ell>n}
	\sum_{i,j\in I}\delta_if\,
	(I-W+R)^\ell_{ij}
	\,(\rho\otimes\tilde\rho)\eta_j.	
\end{align*}
By assumption (\ref{eq:follmer}) and as $f$ is local, the two terms on the 
right vanish as $t\to\infty$ and $n\to\infty$, respectively.
Thus second term in Proposition \ref{prop:contcomp} vanishes as $t\to\infty$. 

We have now proved the estimate
$$
	|\rho f-\tilde\rho f| \le
	\sum_{i,j\in I}\delta_if\,N_{ij}\,
	a_j.
$$
To complete the proof of Theorem \ref{thm:main}, it remains to establish 
the identity $N=DW^{-1}$.  This is an exercise in matrix 
algebra.  By the definition of the matrix product, we have
$$
	(I-W+R)^p = \sum_{k=0}^p
	\sum_{\substack{n_0,\ldots,n_k\ge 0\\n_0+\cdots+n_k=p-k}}
	(I-W)^{n_k}R\cdots (I-W)^{n_1}R(I-W)^{n_0}.
$$
We can therefore write
\begin{align*}
	&\sum_{p=0}^\infty (I-W+R)^p \\
	&\quad\mbox{}= 
	\sum_{k=0}^\infty
	\sum_{n_0,\ldots,n_k\ge 0}
	\sum_{p=0}^\infty
	\mathbf{1}_{n_0+\cdots+n_k=p-k}
	\mathbf{1}_{k\le p}
	(I-W)^{n_k}R\cdots (I-W)^{n_1}R(I-W)^{n_0} \\
	&\quad\mbox{}=
	\sum_{k=0}^\infty
	\sum_{n_0,\ldots,n_k\ge 0}
	(I-W)^{n_k}R\cdots (I-W)^{n_1}R(I-W)^{n_0} \\
	&\quad\mbox{}=
	\sum_{k=0}^\infty (W^{-1}R)^kW^{-1},
\end{align*}
where we have used that $W^{-1}=\sum_{n=0}^\infty (I-W)^n$ as
$W$ is diagonal with $0<W_{ii}\le 1$.
\end{proof}

\subsection{Proof of Corollary \ref{cor:uniq}}
\label{sec:coruniq}

Note that $\sup_iW_{ii}<\infty$ in all parts of Corollary \ref{cor:uniq} 
(either by assumption or as $\card I<\infty$).  Moreover, it is easily 
seen that all parts of Corollary \ref{cor:uniq} as well as the conclusion 
of Theorem \ref{thm:main} are unchanged if all the weights are multiplied 
by the same constant.  We may therefore assume without loss of generality 
that $\sup_iW_{ii}\le 1$.

Next, we note that as $\rho$ and $\tilde\rho$ are tempered, we have
$$
	\sup_{i\in I}\mbox{} (\rho\otimes\tilde\rho)\eta_i \le
	\sup_{i\in I} \rho\,\eta_i(\mbox{}\cdot\mbox{},x^\star_i) +
	\sup_{i\in I} \tilde\rho\,\eta_i(x^\star_i,\mbox{}\cdot\mbox{})
	<\infty
$$
by the triangle inequality.  To verify (\ref{eq:follmer}), it therefore
suffices to show that
\begin{equation}
\label{eq:unifollmer}
	\lim_{k\to\infty}\sum_{j\in I}(I-W+R)^k_{ij}=0
	\quad\mbox{for all }i\in I.
\end{equation}
We now proceed to verify this condition in the different cases of 
Corollary \ref{cor:uniq}.

\begin{proof}[Proof of Corollary \ref{cor:uniq}(1)]
It was shown at the end of the proof of Theorem \ref{thm:main} that
$$
	\sum_{k=0}^\infty (I-W+R)^k = \sum_{k=0}^\infty (W^{-1}R)^kW^{-1}
	= DW^{-1}.
$$
As $W^{-1}$ has finite entries, $D<\infty$ certainly implies that 
$(I-W+R)^k\to 0$ as $k\to\infty$ elementwise.  But this trivially yields 
(\ref{eq:unifollmer}) when $\card I<\infty$.
\end{proof}

\begin{proof}[Proof of Corollary \ref{cor:uniq}(2)]
Note that we can write
$$
	D = \sum_{k=0}^\infty (W^{-1}R)^k =
	\sum_{p=0}^{n-1}(W^{-1}R)^p
	\sum_{k=0}^\infty (W^{-1}R)^{nk}.
$$
Therefore, if $R<\infty$ and $\|(W^{-1}R)^n\|<1$, we can estimate
$$
	\|D\| \le
	\Bigg\|\sum_{p=0}^{n-1}(W^{-1}R)^p\Bigg\|
	\sum_{k=0}^\infty \|(W^{-1}R)^{n}\|^k < \infty.
$$
Thus $D<\infty$ and we conclude by the previous part.
\end{proof}

\begin{proof}[Proof of Corollary \ref{cor:uniq}(3)]
We give a simple probabilistic proof (a more complicated matrix-analytic 
proof could be given along the lines of \cite[Theorem 3.21]{RFS08}).
Let $P=W^{-1}R$. As $\|P\|_\infty<1$, the infinite matrix $P$ is 
substochastic.  Thus $P$ is the transition probability matrix of a killed 
Markov chain $(X_n)_{n\ge 0}$ such that $\mathbf{P}[X_n=j|X_{n-1}=i]=P_{ij}$
and $\mathbf{P}[X_n\mbox{ is dead}| X_{n-1}=i]=1-\sum_jP_{ij}$ (once the 
chain dies, it stays dead).  Denote by $\zeta=\inf\{n:X_n\mbox{ is 
dead}\}$ the killing time of the chain.  Then we obtain
$$
	\mathbf{P}[\zeta>n|X_0=i] = 
	\mathbf{P}[X_n\mbox{ is not dead}|X_0=i] =
	\sum_{j\in I}P^n_{ij} \le
	\|P^n\|_\infty \le \|P\|_\infty^n.
$$
Therefore, as $\|P\|_\infty<1$, we find by letting $n\to\infty$ that 
$\mathbf{P}[\zeta=\infty|X_0=i]=0$.
That is, the chain dies eventually with unit probability for any initial 
condition.

Now define $\tilde P=I-W+R=I-W+WP$.  As $\sup_iW_{ii}\le 1$, the matrix 
$\tilde P$ is also substochastic and corresponds to the following 
transition mechanism.  If $X_{n-1}=i$, then at time $n$ we flip a biased 
coin that comes up heads with probability $W_{ii}$.  In case of heads we 
make a transition according to the matrix $P$, but in case of tails we 
leave the current state unchanged.  From this description, it is evident
that we can construct a Markov chain $(\tilde X_n)_{n\ge 0}$ with
transition matrix $\tilde P$ by modifying the chain $(X_n)_{n\ge 0}$ as 
follows.  Conditionally on $(X_n)_{n\ge 0}$, draw independent random 
variables $(\xi_n)_{n\ge 0}$ such that $\xi_n$ is geometrically 
distributed with parameter $W_{X_nX_n}$.  Now define the process $(\tilde 
X_n)_{n\ge 0}$ such that it stays in state $X_0$ for the first $\xi_0$ 
time steps, then is in state $X_1$ for the next $\xi_1$ time steps, etc.
By construction, the resulting process is Markov with transition 
matrix $\tilde P$.  Moreover, as $\zeta<\infty$ a.s., we have
$\tilde\zeta:=\inf\{n:\tilde X_n\mbox{ is dead}\}<\infty$ a.s.\ also.  Thus
$$
	\lim_{n\to\infty}\sum_{j\in I}(I-W+R)^n_{ij} =
	\lim_{n\to\infty}\mathbf{P}[\tilde\zeta>n|X_0=i] = 0
$$
for every $i\in I$.  We have therefore established (\ref{eq:unifollmer}).
\end{proof}

\begin{proof}[Proof of Corollary \ref{cor:uniq}(4)]
We begin by writing as above
$$
	\sum_{k=0}^\infty (I-W+R)^k = \sum_{k=0}^\infty (W^{-1}R)^kW^{-1} 
	= \sum_{k=0}^\infty W^{-1}(RW^{-1})^k,
$$
where the last identity is straightforward.  Arguing as in 
Corollary \ref{cor:uniq}(2), we obtain
\begin{align*}
	W_{ii}\sum_{k=0}^\infty \sum_{j\in I}(I-W+R)^k_{ij} &=
	\sum_{j\in I}
	\sum_{k=0}^\infty (RW^{-1})^k_{ij} \le
	\Bigg\|
	\sum_{k=0}^\infty (RW^{-1})^k\Bigg\|_\infty \\
	&\le
	\sum_{p=0}^{n-1}\|RW^{-1}\|_\infty^p
	\sum_{k=0}^\infty \|(RW^{-1})^{n}\|_\infty^k < \infty.
\end{align*}
It follows immediately that (\ref{eq:unifollmer}) holds.
\end{proof}

\begin{proof}[Proof of Corollary \ref{cor:uniq}(5)]
Note that
$$
	\sum_{j\in I}(RW^{-1})^k_{ij}\|\eta_j\| \le
	\|(RW^{-1})^k\|_1\sum_{j\in I}\|\eta_j\| \le
	\|RW^{-1}\|_1^k\sum_{j\in I}\|\eta_j\|.
$$
Thus $\sum_j\|\eta_j\|<\infty$ and $\|RW^{-1}\|_1<1$ yield
$$
	\sum_{k=0}^\infty \sum_{j\in I}(I-W+R)^k_{ij}\|\eta_j\|
	= W^{-1}_{ii}\sum_{k=0}^\infty 
	\sum_{j\in I}(RW^{-1})^k_{ij}\|\eta_j\|<\infty,
$$
which evidently implies
$$
	\lim_{k\to\infty}
	\sum_{j\in I}(I-W+R)^k_{ij}(\rho\otimes\tilde\rho)\eta_j=0
	\quad\mbox{for all }i\in I.
$$
We have therefore established (\ref{eq:follmer}).
\end{proof}

\begin{proof}[Proof of Corollary \ref{cor:uniq}(6)]
Let $r=\sup\{m(i,j):R_{ij}>0\}$ (which is finite by assumption), and 
choose $\beta>0$ such that $\|RW^{-1}\|_1<e^{-\beta r}$.  Then we can 
estimate
$$
	\|RW^{-1}\|_{1,\beta m} :=
	\sup_{j\in I}\sum_{i\in I} e^{\beta m(i,j)}(RW^{-1})_{ij}
	\le e^{\beta r}\|RW^{-1}\|_1<1.
$$
As $m$ is a pseudometric, it satisfies the triangle inequality and it is 
therefore easily seen that $\|\cdot\|_{1,\beta m}$ is a matrix norm.  In 
particular, we can estimate
$$
	e^{\beta m(i,j)}(RW^{-1})^n_{ij} \le
	\|(RW^{-1})^n\|_{1,\beta m} \le \|RW^{-1}\|_{1,\beta m}^n
$$
for every $i,j\in I$.  But then
$$
	\|(RW^{-1})^n\|_\infty =
	\sup_{i\in I}\sum_{j\in I}(RW^{-1})^n_{ij} \le
	\|RW^{-1}\|_{1,\beta m}^n
	\sup_{i\in I}\sum_{j\in I}e^{-\beta m(i,j)} < \infty
$$
for all $n$.
We therefore have $\|RW^{-1}\|_\infty<\infty$, and we can choose
$n$ sufficiently large that $\|(RW^{-1})^n\|_\infty<1$.  The
conclusion now follows from Corollary \ref{cor:uniq}(4).
\end{proof}

\subsection{Proof of Theorem \ref{thm:oneside}}

In the case of one-sided local updates, the measure $\rho_{\le k}$ is 
$\gamma^J$-invariant for $\tau(J)=k$ (but not for $\tau(J)<k$).  The proof 
of Theorem \ref{thm:oneside} therefore proceeds by induction on $k$.  In 
each stage of the induction, we apply the logic of Theorem \ref{thm:main} 
to the partial local updates $(\gamma^J)_{J\in\mathcal{J}:\tau(J)=k}$, and 
use the induction hypothesis to estimate the remainder term.  

Throughout this section, we work in the setting of Theorem 
\ref{thm:oneside}.  Define
$$
	I_{\le k}:= \{i\in I:\tau(i)\le k\},\qquad\quad
	I_{k}:= \{i\in I:\tau(i)=k\}.
$$
Note that we can assume without loss of generality that $R_{ij}=0$ 
whenever $\tau(j)>\tau(i)$.  Indeed, the local update rule $\gamma^J_x$ 
does not depend on $x_j$ for $\tau(j)>\tau(J)$, so we can trivially choose 
the coupling $Q^J_{x,z}$ for $x^{I\backslash\{j\}}= z^{I\backslash\{j\}}$ 
such that $Q^J_{x,z}\eta_i=0$ for all $i\in J$.  On the other hand, the 
choice $R_{ij}=0$ evidently yields the smallest bound in Theorem 
\ref{thm:oneside}.  In the sequel, we will always assume that $R_{ij}=0$ 
whenever $\tau(j)>\tau(i)$.

The key induction step is formalized by the following result.

\begin{prop}
\label{prop:onesideinduction}
Assume (\ref{eq:follmer}).  Let $(\beta_i)_{i\in I_{\le k-1}}$ be 
nonnegative weights such that
$$
	|\rho_{\le k-1}g-\tilde\rho_{\le k-1}g| \le
	\sum_{i\in I_{\le k-1}}\delta_ig\,\beta_i
$$
for every bounded measurable quasilocal function 
$g$ on $\bbS_{\le k-1}$ so that $\delta_ig<\infty$ $\forall i$.
Then
$$
	|\rho_{\le k}f-\tilde\rho_{\le k}f| \le
	\sum_{j\in I_{\le k-1}}\Bigg\{\delta_jf +
	\sum_{i,l\in I_k}
	\delta_if\,D_{il}\,(W^{-1}R)_{lj}\Bigg\}\,\beta_j
	+
	\sum_{i,j\in I_k} \delta_if\,D_{ij}\,W_{jj}^{-1}a_j
$$
for every bounded measurable quasilocal function 
$f$ on $\bbS_{\le k}$ so that $\delta_if<\infty$ $\forall i$.
\end{prop}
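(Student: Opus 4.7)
The plan is to adapt the proof of Theorem \ref{thm:main} to the level-$k$ Gibbs sampler on $\bbS_{\le k}$, using one-sided invariance to handle updates inside $I_k$ and the induction hypothesis to absorb the residual influence from $I_{\le k-1}$.

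For the setup and explicit term, I would introduce restricted Gibbs samplers $G^{\mathbf{v}},\tilde G^{\mathbf{v}}$ on $\bbS_{\le k}$ built only from $\gamma^J,\tilde\gamma^J$ with $\tau(J)=k$ (weights $v_J\propto w_J$ for such $J$, zero otherwise). By Definition \ref{defn:onesidelocalupd}, $\rho_{\le k}$ and $\tilde\rho_{\le k}$ are invariant under the respective samplers. The Wasserstein matrix of Lemma \ref{lem:wasserstein} decomposes along $I_{\le k-1}\cup I_k$: identity on the $I_{\le k-1}\times I_{\le k-1}$ block, zero on $I_{\le k-1}\times I_k$, $A:=I-W|_{I_k}+R|_{I_k\times I_k}$ on $I_k\times I_k$, and $B:=R|_{I_k\times I_{\le k-1}}$ on $I_k\times I_{\le k-1}$ (which captures how sites in $I_{\le k-1}$ influence level-$k$ updates). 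Applying Proposition \ref{prop:markovcomp} with the scaling limit of Proposition \ref{prop:contcomp} (weights $(t/n)\mathbf{v}^r$, $n,r\to\infty$, then $t\to\infty$) yields an explicit term plus a residual $|\rho_{\le k}G^nf-\tilde\rho_{\le k}G^nf|$. Since $\hat Q^J$ is supported on $\bbS^J\subseteq\bbS^{I_k}$ when $\tau(J)=k$, the source $\hat Q^J_x\eta_j$ vanishes for $j\in I_{\le k-1}$; combining the block structure with the matrix identity $\sum_pA^p=D|_{I_k}\,W|_{I_k}^{-1}$ from the end of the proof of Theorem \ref{thm:main}, and using that $D_{ij}=(D|_{I_k})_{ij}$ for $i,j\in I_k$ (since the convention $R_{ij}=0$ for $\tau(j)>\tau(i)$ forces any $W^{-1}R$-chain between $I_k$-sites to stay in $I_k$), the explicit term becomes $\sum_{i,j\in I_k}\delta_if\,D_{ij}W^{-1}_{jj}a_j$, matching the last term of the claim.

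For the residual, the key observation is that by disintegrating the $\gamma^J$-invariance of $\rho_{\le k}$, the conditional $\rho(dz^{I_k}|y)$ is invariant under the fiberwise sampler $G_{y,\cdot}$ for $\rho_{\le k-1}$-a.e.\ $y$. Hence $\int\rho(dz|y)\,G^nf(y,z)=\int\rho(dz|y)\,f(y,z)=:\bar f(y)$ for every $n$, so $\rho_{\le k}G^nf=\rho_{\le k-1}\bar f$; under the mixing on $I_k$ provided by (\ref{eq:oneside}) together with (\ref{eq:follmer}), $G^nf(y,z)\to\bar f(y)$ pointwise in the scaling limit and $\tilde\rho_{\le k}G^nf\to\tilde\rho_{\le k-1}\bar f$ by dominated convergence, so the residual converges to $|\rho_{\le k-1}\bar f-\tilde\rho_{\le k-1}\bar f|$, which the induction hypothesis bounds by $\sum_{j\in I_{\le k-1}}\delta_j\bar f\cdot\beta_j$. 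To bound $\delta_j\bar f$, I would pass the Wasserstein estimate $\delta_j(G^nf)\le\sum_i\delta_if\,V^{(k),n}_{ij}$ to the limit using the block structure of $(V^{(k)})^n$ (identity top-left, zero top-right, $A^n$ bottom-right, $(\sum_{q<n}A^q)B$ bottom-left): for $j\in I_{\le k-1}$, the $t\to\infty$ limit gives $\mathbf{1}_{i=j}$ when $i\in I_{\le k-1}$, and the $(i,j)$-entry of $\sum_qA^qB=D|_{I_k}\,W|_{I_k}^{-1}R|_{I_k\times I_{\le k-1}}$, namely $\sum_{l\in I_k}D_{il}(W^{-1}R)_{lj}$, when $i\in I_k$. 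Thus $\delta_j\bar f\le\delta_jf+\sum_{i,l\in I_k}\delta_if\,D_{il}(W^{-1}R)_{lj}$, and substituting recovers the first two terms of the claim.

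The main obstacle I expect is rigorously justifying the scaling-limit interchanges in the one-sided setting---parallel to but more delicate than the uniform-integrability argument of Proposition \ref{prop:contcomp}, because here the residual is not made to vanish as $t\to\infty$ but rather reduced to a well-defined projected quantity---together with extracting the pointwise mixing $G^nf\to\bar f$ from assumptions (\ref{eq:follmer}) and (\ref{eq:oneside}). Care is also required to derive the bound on $\delta_j\bar f$ cleanly from the Wasserstein matrix rather than via a cumbersome second application of Theorem \ref{thm:main} to the conditional measures $\rho(\cdot|y)$ and $\rho(\cdot|\tilde y)$.
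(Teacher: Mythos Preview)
Your overall architecture---restricting the Gibbs sampler to blocks $J$ with $\tau(J)=k$, identifying the block structure of the Wasserstein matrix, and handling the explicit term via the matrix identity---matches the paper exactly. The divergence is in how you treat the residual $|\rho_{\le k}G^nf-\tilde\rho_{\le k}G^nf|$, and there your proposal has a genuine gap.

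You want to argue that $G^nf(y,z)\to\bar f(y)$ pointwise (in the scaling limit), where $\bar f(y)=\int\rho(dz^{I_k}|y)\,f(y,z)$, and then apply the induction hypothesis to $\bar f$. Two problems. First, you invoke assumption (\ref{eq:oneside}), but Proposition \ref{prop:onesideinduction} assumes only (\ref{eq:follmer}); the paper explicitly notes after the proof of Theorem \ref{thm:oneside} that (\ref{eq:oneside}) is not needed when $k_->-\infty$, and the induction step must respect this. Second, and more seriously, (\ref{eq:follmer}) gives $\sum_{j\in I_k}A^n_{ij}\,(\rho\otimes\tilde\rho)\eta_j\to 0$, not $A^n_{ij}\to 0$ entrywise nor $\sum_j A^n_{ij}\to 0$. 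This integrated decay does not obviously yield the pointwise convergence $G^nf\to\bar f$ you need, nor does it justify passing $\delta_j(G^nf)\to\delta_j\bar f$ to obtain your bound on $\delta_j\bar f$. The definition of $\bar f$ via the conditional $\rho(\,\cdot\,|y)$ also presupposes that this is the \emph{unique} invariant measure of the fiberwise sampler, which is a uniqueness statement you would have to prove separately.

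The paper avoids all of this with a single elementary lemma (Lemma \ref{lem:couplingplusplus}): for any bounded quasilocal $g$ on $\bbS_{\le k}$,
\[
|\rho_{\le k}g-\tilde\rho_{\le k}g|\le\sum_{i\in I_{\le k-1}}\delta_ig\,\beta_i+3\sum_{i\in I_k}\delta_ig\,(\rho\otimes\tilde\rho)\eta_i,
\]
obtained by freezing the $I_k$ coordinates at an arbitrary point, applying the induction hypothesis to the resulting $I_{\le k-1}$-local function, and bounding the freezing error by Lemma \ref{lem:coupling}. Applying this with $g=(G^{\mathbf{v}})^nf$ and then the Wasserstein matrix to $\delta_i g$ produces, after the block computation you already described, the desired $\beta_j$-terms plus a remainder $3\sum_{i,j\in I_k}\delta_if\,A^n_{ij}\,(\rho\otimes\tilde\rho)\eta_j$. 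This remainder is \emph{exactly} of the form controlled by (\ref{eq:follmer}) and vanishes in the scaling limit by the same argument as in Proposition \ref{prop:contcomp}. No pointwise limit, no identification of $\bar f$, and no appeal to (\ref{eq:oneside}) are needed.
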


\begin{proof}
We fix throughout the proof a bounded and measurable local function 
$f:\bbS_{\le k}\to\mathbb{R}$ such that $\delta_if<\infty$ for all 
$i\in I_{\le k}$.  The extension of the conclusion to quasilocal functions 
$f$ follows readily by localization as in the proof of Lemma 
\ref{lem:coupling}.

We denote by $G^{\mathbf{v}}$ and $\tilde G^{\mathbf{v}}$ the Gibbs 
samplers as defined in section \ref{sec:step2}.  Let us enumerate the 
partial cover $\{J\in\mathcal{J}:\tau(J)=k\}$ as $\{J_1,J_2,\ldots\}$, and 
define the weights $\mathbf{v}^r$ as in the proof of Proposition 
\ref{prop:contcomp}.  By the definition of the one-sided local update 
rule, $\rho_{\le k}$ is $G^{u\mathbf{v}^r}$-invariant and $\tilde\rho_{\le k}$
is $\tilde G^{u\mathbf{v}^r}$-invariant for every $r,u$ such that 
$\sum_Juv_J^r\le 1$.  Thus
$$
	|\rho_{\le k}f-\tilde\rho_{\le k}f| \le
	\sum_{i,j\in I_{\le k}}
		\delta_if\,N_{ij}^{u\mathbf{v}^r(n)}\,a_j^{u\mathbf{v}^r}
	+ |\rho_{\le k}(G^{u\mathbf{v}^r})^nf-
	\tilde\rho_{\le k}(G^{u\mathbf{v}^r})^nf|
$$
as in the proof of Corollary \ref{cor:precomp}, with the only distinction 
that we refrain from using the Wasserstein matrix to expand the second 
term in the proof of Proposition \ref{prop:markovcomp}.  We now use the 
induction hypothesis to obtain an improved estimate for the second term.

\begin{lem}
\label{lem:couplingplusplus}
We can estimate
$$
	|\rho_{\le k}g-\tilde\rho_{\le k}g| \le
	\sum_{i\in I_{\le k-1}}\delta_ig\,\beta_i +
	3\sum_{i\in I_k}\delta_ig\,(\rho\otimes\tilde\rho)\eta_i
$$
for any bounded and measurable quasilocal function
$g:\bbS_{\le k}\to\mathbb{R}$ such that $\delta_ig<\infty$
$\forall i$.
\end{lem}

\begin{proof}
For any $x\in\bbS_{\le k}$ we can estimate
$$
	|\rho_{\le k} g-\tilde\rho_{\le k} g| \le
	|\rho_{\le k-1} \hat g_x-
	\tilde\rho_{\le k-1} \hat g_x| +
	|\rho_{\le k}(g-\hat g_x)| +
	|\tilde\rho_{\le k}(g-\hat g_x)|,
$$
where we defined $\hat g_x(z):=g(z^{I_{\le k-1}}x^{I_k})$.  By Lemma 
\ref{lem:coupling} we have
$$
	|g(z)-\hat g_x(z)| \le
	\sum_{i\in I_k} \delta_ig\,\eta_i(z_i,x_i).
$$
We can therefore estimate using the induction hypothesis and the triangle
inequality
$$
	|\rho_{\le k} g-\tilde\rho_{\le k} g| \le
	\sum_{i\in I_{\le k-1}}\delta_ig\,\beta_i +
	\sum_{i\in I_k} \delta_ig\,
	\{\rho\eta_i(\mbox{}\cdot\mbox{},\tilde x_i) +
	\eta_i(\tilde x_i,x_i) +
	\tilde\rho\eta_i(\mbox{}\cdot\mbox{},x_i)\}
$$
for all $x,\tilde x\in\bbS_{\le k}$.  Now integrate this expression
with respect to $\rho(dx)\,\tilde\rho(d\tilde x)$.
\end{proof}

To lighten the notation somewhat we will write $\mathbf{v}=u\mathbf{v}^r$ 
until further notice.
Note that by construction $a_j^{\mathbf{v}}=0$ whenever $\tau(j)<k$, 
while $R_{ij}^{\mathbf{v}}=0$ whenever $\tau(j)>\tau(i)$ by assumption.
Thus we obtain using Lemma \ref{lem:couplingplusplus} and Lemma
\ref{lem:wasserstein}
\begin{align*}
	|\rho_{\le k}f-\tilde\rho_{\le k}f| \le
	\sum_{i,j\in I_k}
		\delta_if\,N_{ij}^{\mathbf{v}(n)}\,a_j^{\mathbf{v}}
	&+ 
	3\sum_{i,j\in I_k}\delta_if\,
	(I-W^{\mathbf{v}}+R^{\mathbf{v}})^n_{ij}
	\,(\rho\otimes\tilde\rho)\eta_j \\
	&+
	\sum_{i\in I_{\le k}}
	\sum_{j\in I_{\le k-1}}
	\delta_if\,(I-W^{\mathbf{v}}+R^{\mathbf{v}})^n_{ij}\,\beta_j,
\end{align*}
provided that 
$\sum_i\delta_if\,(I-W^{\mathbf{v}}+R^{\mathbf{v}})^n_{ij}<\infty$
for all $j$.

Next, note that as $v_J=0$ for $\tau(J)<k$, we have 
$R^{\mathbf{v}}_{ij}=W^{\mathbf{v}}_{ij}=0$
for $i\in I_{\le k-1}$.  Thus
$$
	V^{\mathbf{v}} =
	I-W^{\mathbf{v}}+R^{\mathbf{v}}
	=
	\left(
	\begin{array}{cc}
	\check V^{\mathbf{v}} & \check R^{\mathbf{v}} \\
	0 & I 
	\end{array}
	\right),
$$
where $\check V^{\mathbf{v}}:=(V^{\mathbf{v}}_{ij})_{i,j\in I_k}$
and $\check R^{\mathbf{v}}:=(R^{\mathbf{v}}_{ij})_{i\in I_k,j\in 
I_{\le k-1}}$.  In particular,
$$
	(I-W^{\mathbf{v}}+R^{\mathbf{v}})^n
	=
	\left(
	\begin{array}{cc}
	(\check V^{\mathbf{v}})^n & 
	\sum_{k=0}^{n-1}(\check V^{\mathbf{v}})^k
	\check R^{\mathbf{v}} \\
	0 & I 
	\end{array}
	\right).
$$
Moreover, as $R_{ij}^{\mathbf{v}}=0$ whenever $\tau(j)>\tau(i)$, we 
evidently have $(\check 
V^{\mathbf{v}})^k_{ij}=(V^{\mathbf{v}})^k_{ij}$ for $i,j\in I_k$.
Substituting into the above expression, we obtain
\begin{align*}
	|\rho_{\le k}f-\tilde\rho_{\le k}f| \le
	\sum_{i,j\in I_k}
		\delta_if\,N_{ij}^{\mathbf{v}(n)}\,a_j^{\mathbf{v}}
	&+ 
	3\sum_{i,j\in I_k}\delta_if\,
	(I-W^{\mathbf{v}}+R^{\mathbf{v}})^n_{ij}
	\,(\rho\otimes\tilde\rho)\eta_j \\
	&+
	\sum_{j\in I_{\le k-1}}
	\Bigg\{
	\delta_jf+
	\sum_{i,l\in I_k}
	\delta_if\,N_{il}^{\mathbf{v}(n)}\,R^{\mathbf{v}}_{lj}
	\Bigg\}
	\,\beta_j
\end{align*}
provided that 
$\sum_i\delta_if\,(I-W^{\mathbf{v}}+R^{\mathbf{v}})^n_{ij}<\infty$
for all $j$.  But the latter is easily verified using (\ref{eq:follmer}) 
and Lemma \ref{lem:exponentialest}, as $f$ is
local and $\delta_if<\infty$ for all $i$ by assumption.

The remainder of the proof now proceeds precisely as in the proof of  
Proposition \ref{prop:contcomp} and Theorem \ref{thm:main}.  We set 
$\mathbf{v}=(t/n)\mathbf{v}^r$, let $n\to\infty$ and then $r\to\infty$.  
The arguments for the first two terms are identical to the proof of 
Proposition \ref{prop:contcomp}, while the argument for the third term is 
essentially identical to the argument for the first term.  The proof is 
then completed as in the proof of Theorem \ref{thm:main}.  We leave the 
details for the reader. 
\end{proof}

We now proceed to complete the proof of Theorem \ref{thm:oneside}.

\begin{proof}[Proof of Theorem \ref{thm:oneside}]
Consider first the case that $k_-:=\inf_{i\in I}\tau(i)>-\infty$.
In this setting, we say that the comparison theorem holds for a given
$k\ge k_-$ if we have
$$
	|\rho_{\le k}f-\tilde\rho_{\le k}f| \le
	\sum_{i,j\in I_{\le k}}\delta_if\,D_{ij}\,W_{jj}^{-1}a_j
$$
for every bounded measurable quasilocal function $f$ on $\bbS_{\le k}$ 
such that $\delta_if<\infty$ $\forall i$.  We can evidently apply Theorem 
\ref{thm:main} to show that the comparison theorem holds for $k_-$.  We 
will now use Proposition \ref{prop:onesideinduction} to show that if the 
comparison theorem holds for $k-1$, then it holds for $k$ also.  Then the 
comparison theorem holds for every $k\ge k_-$ by induction, so the 
conclusion of Theorem \ref{thm:oneside} holds whenever $f$ is a local 
function.  The extension to quasilocal $f$ follows readily by localization 
as in the proof of Lemma \ref{lem:coupling}.

We now complete the induction step.  When the comparison theorem holds for 
$k-1$ (the induction hypothesis), we can apply Proposition 
\ref{prop:onesideinduction} with
$$
	\beta_i = \sum_{j\in I_{\le k-1}}D_{ij}\,W_{jj}^{-1}a_j.
$$
This gives
\begin{multline*}
	|\rho_{\le k}f-\tilde\rho_{\le k}f| \le
	\sum_{j,q\in I_{\le k-1}}
	\sum_{i,l\in I_k}
	\delta_if\,D_{il}\,(W^{-1}R)_{lq}\,
	D_{qj}\,W_{jj}^{-1}a_j \\ \mbox{}
	+
	\sum_{i,j\in I_{\le k-1}}\delta_if \, D_{ij}\,W_{jj}^{-1}a_j
	+
	\sum_{i,j\in I_k} \delta_if\,D_{ij}\,W_{jj}^{-1}a_j
\end{multline*}
for every bounded measurable quasilocal function 
$f$ on $\bbS_{\le k}$ so that $\delta_if<\infty$ $\forall i$. 
To complete the proof, it therefore suffices to show that we have
$$
	D_{ij}=
	\sum_{q\in I_{\le k-1}}
	\sum_{l\in I_k}
	D_{il}\,(W^{-1}R)_{lq}\,D_{qj}
	\qquad\mbox{for }i\in I_k,~j\in I_{\le k-1}.
$$
To see this, note that as $R_{ij}=0$ for $\tau(i)<\tau(j)$, we can write
\begin{align*}
	D_{ij}&=\sum_{p=1}^\infty 
	\sum_{\substack{j_1,\ldots,j_{p-1}\in I :\mbox{}\\
	\tau(j)\le\tau(j_1)\le\cdots\le\tau(j_{p-1})\le k}}
	(W^{-1}R)_{ij_{p-1}} \cdots
	(W^{-1}R)_{j_2j_1} 
	(W^{-1}R)_{j_{1}j} \\
	&=\sum_{p=1}^\infty ~
	\sum_{n=1}^p ~
	\sum_{l\in I_k} ~
	\sum_{q\in I_{\le k-1}}
 	(W^{-1}R)^{n-1}_{il}
	(W^{-1}R)_{lq} 
	(W^{-1}R)_{qj}^{p-n}
\end{align*}
for $i\in I_k$ and $j\in I_{\le k-1}$, where we have used that whenever
$\tau(j_1)\le\cdots\le\tau(j_{p-1})\le k$ there exists $1\le n\le p$ 
such that $j_1,\ldots,j_{p-n}\in I_{\le k-1}$ and 
$j_{p-n+1},\ldots,j_{p-1}\in I_k$.  Rearranging the last expression yields 
the desired identity for $D_{ij}$, completing the proof for the case 
$k_->-\infty$ (note that in this case the additional assumption 
(\ref{eq:oneside}) was not needed).

We now turn to the case that $k_-=-\infty$.  Let us say that 
$(\beta_i)_{i\in I_{\le k}}$ is a $k$-estimate if
$$
	|\rho_{\le k}g-\tilde\rho_{\le k}g| \le
	\sum_{i\in I_{\le k}}\delta_ig\,\beta_i
$$
for every bounded measurable quasilocal function $g$ on $\bbS_{\le k}$
such that $\delta_ig<\infty$ $\forall i$.  Then the conclusion of
Proposition \ref{prop:onesideinduction} can be reformulated as follows:
if $(\beta_i)_{i\in I_{\le k-1}}$ is a $(k-1)$-estimate, then
$(\beta_i')_{i\in I_{\le k}}$ is a $k$-estimate with
$\beta_i'=\beta_i$ for $i\in I_{\le k-1}$ and
$$
	\beta_i' = 
	\sum_{j\in I_{\le k-1}}
	\sum_{l\in I_k}
	D_{il}\,(W^{-1}R)_{lj}\,\beta_j
	+
	\sum_{j\in I_k} D_{ij}\,W_{jj}^{-1}a_j
$$
for $i\in I_k$.  We can therefore repeatedly apply Proposition 
\ref{prop:onesideinduction} to extend an initial estimate.  In particular, 
if we fix $k\in\mathbb{Z}$ and $n\ge 1$, and if $(\beta_i)_{i\in I_{\le 
k-n}}$ is a $(k-n)$-estimate, then we can obtain a $k$-estimate 
$(\beta'_i)_{i\in I_{\le k}}$ by iterating Proposition 
\ref{prop:onesideinduction} $n$ times.  We claim that
$$
	\beta_i' = 
	\sum_{s=k-n+1}^{k-r}
	\Bigg\{
	\sum_{j\in I_{\le k-n}}
	\sum_{l\in I_s}
	D_{il}\,(W^{-1}R)_{lj}\,\beta_j
	+
	\sum_{j\in I_s} D_{ij}\,W_{jj}^{-1}a_j\Bigg\}
$$
for $0\le r\le n-1$ and $i\in I_{k-r}$.  To see this, we proceed again by 
induction.  As $(\beta_i)_{i\in I_{\le k-n}}$ is a $(k-n)$-estimate,
the expression is valid for $r=n-1$ by Proposition
\ref{prop:onesideinduction}.  Now suppose the expression is valid for all
$u<r\le n-1$.  Then we obtain
\begin{align*}
	\beta_i' &= 
	\sum_{j\in I_{\le k-n}}
	\sum_{l\in I_{k-u}}
	D_{il}\,(W^{-1}R)_{lj}\,\beta_j
	+
	\sum_{j\in I_{k-u}} D_{ij}\,W_{jj}^{-1}a_j \\
	&\qquad\mbox{}+
	\sum_{s=k-n+1}^{k-u-1}
	\sum_{j\in I_{s}}
	\sum_{l\in I_{k-u}}
	\sum_{t=k-n+1}^{s}
	\sum_{q\in I_{\le k-n}}
	\sum_{p\in I_t}
	D_{il}\,(W^{-1}R)_{lj}\,D_{jp}\,(W^{-1}R)_{pq}\,\beta_q \\
	&\qquad\mbox{}+
	\sum_{s=k-n+1}^{k-u-1}
	\sum_{j\in I_{s}}
	\sum_{l\in I_{k-u}}
	\sum_{t=k-n+1}^{s}
	\sum_{q\in I_t} 
	D_{il}\,(W^{-1}R)_{lj}\,D_{jq}\,W_{qq}^{-1}a_q
\end{align*}
for $i\in I_{k-u}$ by Proposition \ref{prop:onesideinduction}.
Rearranging the sums yields
\begin{align*}
	\beta_i' &= 
	\sum_{j\in I_{\le k-n}}
	\sum_{l\in I_{k-u}}
	D_{il}\,(W^{-1}R)_{lj}\,\beta_j
	+
	\sum_{j\in I_{k-u}} D_{ij}\,W_{jj}^{-1}a_j \\
	&\qquad\mbox{}+
	\sum_{t=k-n+1}^{k-u-1} \Bigg\{
	\sum_{q\in I_{\le k-n}}
	\sum_{p\in I_t}
	\bar D_{ip}\,(W^{-1}R)_{pq}\,\beta_q 
	+ 
	\sum_{p\in I_t} 
	\bar D_{ip}\,W_{pp}^{-1}a_p\Bigg\},
\end{align*}
for $i\in I_{k-u}$, where we have defined
$$
	\bar D_{ij} :=
	\sum_{\ell=s}^{t-1}
	\sum_{q\in I_\ell}
	\sum_{l\in I_t}
	D_{il}\,(W^{-1}R)_{lq}\,D_{qj}
$$
whenever $i\in I_t$ and $j\in I_s$ for $s<t$.  But as $D_{qj}=0$ when 
$\tau(q)<\tau(j)$, we have
$$
	\bar D_{ij} =
	\sum_{q\in I_{\le t-1}}
	\sum_{l\in I_t}
	D_{il}\,(W^{-1}R)_{lq}\,D_{qj} = D_{ij}
	\qquad\mbox{for }i\in I_t,~j\in I_{\le t-1}
$$
using the identity used in the proof for the case $k_->-\infty$, and the 
claim follows.

We can now complete the proof for the case $k_-=-\infty$.
It suffices to prove the theorem for a given local function $f$
(the extension to quasilocal $f$ follows readily as in the 
proof of Lemma \ref{lem:coupling}).  Let us therefore fix a $K$-local 
function $f$ for some $K\in\mathcal{I}$, and let $k=\max_{i\in K}\tau(i)$
and $n \ge 1$.  By Lemma \ref{lem:coupling}, we find that $(\beta_i)_{i\in 
I_{\le k-n}}$ is trivially a $(k-n)$-estimate if we set 
$\beta_i=(\rho\otimes\tilde\rho)\eta_i$ for $i\in I_{\le k-n}$.
We therefore obtain
$$
	|\rho f-\tilde\rho f| \le
	\sum_{i,j\in I}\delta_if\,D_{ij}\,W_{jj}^{-1}a_j
	+\sum_{i\in I}\sum_{j\in I_{\le k-n}}
	\delta_if\,D_{ij}\,(\rho\otimes\tilde\rho)\eta_j
$$
from the $k$-estimate $(\beta_i')_{i\in I_{\le k}}$ derived above,
where we have used that $DW^{-1}R\le D$.  But as $f$ is local and 
$\delta_if<\infty$ for all $i$ by assumption, the second term vanishes as 
$n\to\infty$ by assumption (\ref{eq:oneside}).  This completes the proof 
for the case $k_-=-\infty$.
\end{proof}

\section{Application: particle filters}
\label{sec:appl}

Our original motivation for developing the generalized comparison theorems 
of this paper was the investigation of algorithms for filtering in high 
dimension.  In this section we will develop one such application in 
detail. Our result answers a question raised in \cite{RvH13}, and also 
serves as a concrete illustration of the utility of the generalized 
comparison theorems.

\subsection{Introduction and main result}

Let $(X_n,Y_n)_{n\ge 0}$ be a Markov chain.  We interpret $X_n$ as the 
unobserved component of the model and $Y_n$ as the observed component.  A 
fundamental problem in this setting is to track the state of the 
unobserved component $X_n$ given the history of observed data 
$Y_1,\ldots,Y_n$.  Such problems are ubiquitous in a wide variety of 
applications, ranging from classical tracking problems in navigation and 
robotics to large-scale forecasting problems such as weather prediction, 
and are broadly referred to as \emph{filtering} or \emph{data 
assimilation} problems.

In principle, the optimal solution to the tracking problem is
provided by the \emph{filter}
$$
	\pi_n := \mathbf{P}[X_n\in\mbox{}\cdot\mbox{}|Y_1,\ldots,Y_n]. 
$$ 
If the conditional distribution $\pi_n$ can be computed, in yields not 
only a least mean square estimate of the unobserved state $X_n$ but also a 
complete representation of the uncertainty in this estimate.  
Unfortunately, when $X_n$ takes values in a continuous 
(or finite but large)
state space, the 
filter is rarely explicitly computable and approximations become 
necessary.  In practice, filtering is widely implemented by a class of 
sequential Monte Carlo algorithms called \emph{particle filters}, cf.\ 
\cite{CMR05}, that have been very successful in classical applications. 

The major problem with particle filtering algorithms is that they 
typically require an exponential number of samples in the model dimension. 
Such algorithms are therefore largely useless in high-dimensional problems 
that arise in complex applications such as weather forecasting.  We refer 
to \cite{RvH13} for a detailed discussion of these issues and for further 
references.  In many applications, the high-dimensional nature of the 
problem is due to the presence of spatial degrees of freedom: $X_n$ and 
$Y_n$ at each time $n$ are themselves random fields that evolve 
dynamically over time.  In practice, such models are typically expected to 
exhibit decay of correlations.  We have started in \cite{RvH13} to explore 
the possibility that such properties could be exploited to beat the curse 
of dimensionality by including a form of spatial localization in the 
filtering algorithm.  In particular, the initial analysis in \cite{RvH13} 
has yielded dimension-free error bounds for the simplest possible class of 
local particle filtering algorithms, called \emph{block particle filters}, 
under strong (but dimension-free) model assumptions that ensure the 
presence of decay of correlations.

It should be noted that the block particle filtering algorithm exhibits 
some significant drawbacks that could potentially be resolved by using 
more sophisticated algorithms. These issues are discussed at length in 
\cite{RvH13}, but are beyond the scope of this paper.  In the sequel, we 
will reconsider the same algorithm that was introduced in \cite{RvH13}, 
but we provide an improved analysis of its performance on the basis of 
Theorem \ref{thm:main}.  We will see that the use of Theorem 
\ref{thm:main} already yields a \emph{qualitative} improvement over the 
main result of \cite{RvH13}.

In the remainder of this section we recall the setting of \cite{RvH13} and 
state our main result on block particle filters.  The following sections 
are devoted to the proofs.

\subsubsection*{Dynamical model}

Let $(X_n,Y_n)_{n\ge 0}$ be a Markov chain that takes values in the
product space $\bbX\times\bbY$, and whose transition probability $P$ 
can be factored as
$$
	P((x,y),A) = \int \mathbf{1}_A(x',y')\,
	p(x,x')\,g(x',y')\,\psi(dx')\,\varphi(dy').
$$
Such processes are called \emph{hidden Markov models}
\cite{CMR05}.  As a consequence of this definition, the unobserved process 
$(X_n)_{n\ge 0}$ is a Markov chain in its own right with
transition density $p$ (with respect to the reference measure
$\psi$), while each observation $Y_n$ is a noisy function of $X_n$
only with observation density $g$ (with respect to the reference
measure $\varphi$).

Our interest is in high-dimensional hidden Markov models that possess 
spatial structure.  To this end, we introduce a finite undirected graph 
$G=(V,E)$ that determines the spatial degrees of freedom of the model.
The state $(X_n,Y_n)$ at each time $n$ is itself a random field 
$(X_n^v,Y_n^v)_{v\in V}$ indexed by the vertices of the graph $G$.  
In particular, we choose
$$
	\bbX = \prod_{v\in V}\bbX^v,\qquad
	\bbY = \prod_{v\in V}\bbY^v,\qquad
	\psi = \bigotimes_{v\in V}\psi^v,\qquad
	\varphi = \bigotimes_{v\in V}\varphi^v,
$$
where $(\bbX^v,\psi^v)$ and $(\bbY^v,\varphi^v)$ are measure spaces for 
every $v\in V$.  To define the dynamics of the model, we introduce for 
each $v\in V$ a local transition density $p^v:\bbX\times\bbX^v\to\mathbb{R}_+$
and local observation density $g^v:\bbX^v\times\bbY^v\to\mathbb{R}_+$, and 
we set
$$
	p(x,z) = \prod_{v\in V}p^v(x,z^v),\qquad\qquad
	g(x,y) = \prod_{v\in V}g^v(x^v,y^v).
$$
Therefore, each observation $Y_n^v$ at location $v$ is a noisy function of 
the unobserved state $X_n^v$ at location $v$, and the current state 
$X_n^v$ is determined by the configuration $X_{n-1}$ at the previous time 
step.  We will assume throughout that the dynamics is \emph{local} in the 
sense that $p^v(x,z^v)=p^v(\tilde x,z^v)$ whenever $x^{N(v)}=\tilde 
x^{N(v)}$, where $N(v) = \{v'\in V:d(v,v')\le r\}$ denotes a neighborhood 
of radius $r$ around the vertex $v$ with respect to the graph distance 
$d$.  That is, the state $X_n^v$ at time $n$ and vertex $v$ depends only 
on the past $X_0,\ldots,X_{n-1}$ through the states $X_{n-1}^{N(v)}$ in an 
$r$-neighborhood of $v$ in the previous time step; the interaction radius 
$r$ is fixed throughout.  The dependence structure of our general model is 
illustrated schematically in Figure \ref{fig:lfilt} (in the simplest case 
of a linear graph $G$ with $r=1$).
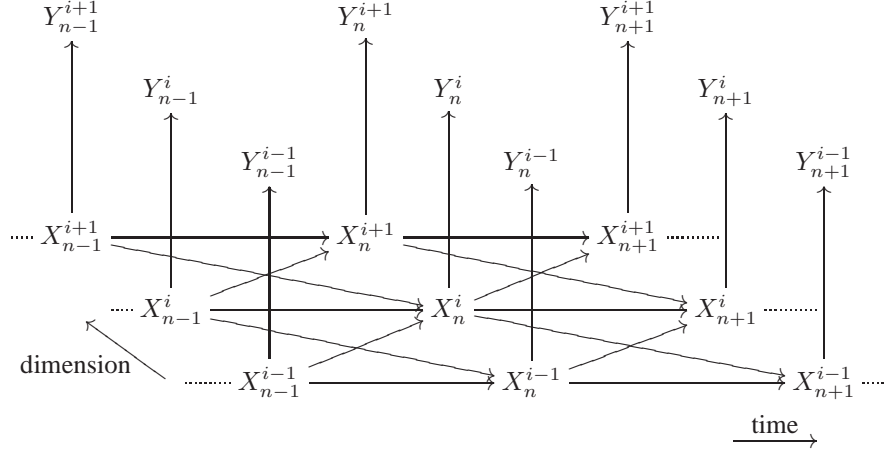
\begin{figure}
{\small
$$\xymatrixrowsep{.8pc}\xymatrixcolsep{.6pc}%
\xymatrix{
 & Y_{n-1}^{i+1} &&& Y_n^{i+1} &&& Y_{n+1}^{i+1} & \\
 && Y_{n-1}^{i} &&& Y_n^{i} &&& Y_{n+1}^{i} & \\
 &&& Y_{n-1}^{i-1} &&& Y_n^{i-1} &&& Y_{n+1}^{i-1} & \\
 \ar@{.}[r] & X_{n-1}^{i+1} \ar[uuu] \ar[rrr] \ar[drrrr] 
        &&& X_n^{i+1} \ar[uuu] \ar[rrr] \ar[drrrr] &&& X_{n+1}^{i+1} 
\ar[uuu] \ar@{.}[r] &  \\
 & ~~~~~~~~~~ \ar@{.}[r] & X_{n-1}^i \ar[uuu] \ar[rrr] \ar[urr] \ar[drrrr]
        &&& X_n^i \ar[uuu] \ar[rrr] \ar[urr] \ar[drrrr]
 &&& X_{n+1}^i \ar[uuu] \ar@{.}[r] &  \\
 && \ar[ul]^{\mbox{dimension}} ~~
 \ar@{.}[r] & X_{n-1}^{i-1} \ar[uuu] \ar[rrr] \ar[urr]
        &&& X_n^{i-1} \ar[uuu] \ar[rrr] \ar[urr]
 &&& X_{n+1}^{i-1} \ar[uuu] \ar@{.}[r] &  \\
&&&&&&&& \ar[r]^{\mbox{time}} &
}$$
}
\caption{Dependency graph of a high-dimensional filtering model
of the type considered in section \ref{sec:appl}.\label{fig:lfilt}}
\end{figure}

\subsubsection*{Block particle filters}

An important property of the filter 
$\pi_n=\mathbf{P}[X_n\in\mbox{}\cdot\mbox{}|Y_1,\ldots,Y_n]$ is that it 
can be computed recursively.  To this end, define for every 
probability measure $\rho$ on $\bbX$ the probability measures 
$\mathsf{P}\rho$ and $\mathsf{C}_n\rho$ as follows:
$$
	\mathsf{P}\rho(dx') := \psi(dx')\int p(x,x')\,\rho(dx),
	\qquad
	\mathsf{C}_n\rho(dx) := \frac{g(x,Y_n)\,\rho(dx)}{\int
	g(z,Y_n)\,\rho(dz)}.
$$
Then it is an elementary consequence of Bayes' formula that \cite{CMR05}
$$
	\pi_n = \mathsf{F}_n\pi_{n-1}:=\mathsf{C}_n\mathsf{P}\pi_{n-1}
	\quad\mbox{for every }n\ge 1,
$$
where the initial condition is given by
$\pi_0=\mu:=\mathbf{P}[X_0\in\mbox{}\cdot\mbox{}]$.  In the sequel we 
will often write $\pi_n^\mu:=\mathsf{F}_n\cdots\mathsf{F}_1\mu$ to 
indicate explicitly the initial condition of the filtering recursion.

To obtain approximate algorithms, we insert additional steps in the 
filtering recursion that enable a tractable implementation.  The classical 
particle filtering algorithm inserts a random sampling step $\mathsf{S}^N$
in the filtering recursion that replaces the current filtering 
distribution by the empirical measure of $N$ independent samples: that is, 
$$
	\mathsf{S}^N\rho := \frac{1}{N}\sum_{i=1}^N\delta_{x(i)}
	\quad\mbox{where}\quad
	(x(i))_{i=1,\ldots,N}\mbox{ are i.i.d.\ samples}\sim\rho.
$$
This gives rise to a sequential Monte Carlo algorithm that maintains at 
each time $N$ approximate samples from the current filtering distribution;
see \cite{CMR05,RvH13} and the references therein.  As $N\to\infty$, the 
sampling error vanishes by the law of large numbers and the particle 
filter converges to the exact filter.  Unfortunately, the number of 
samples $N$ needed to achieve a fixed error is typically exponential in 
the model dimension $\card V$.

To alleviate the curse of dimensionality we must localize the algorithm so 
that it can benefit from the decay of correlations of the underlying 
model.  The simplest possible algorithm of this type inserts an additional 
localization step in the filtering recursion in the following manner.
Fix a partition $\mathcal{K}$ of the vertex set $V$ into nonoverlapping 
blocks, and define for any probability measure $\rho$ on $\bbX$ the
blocking operator $\mathsf{B}\rho$ as
$$
	\mathsf{B}\rho := \bigotimes_{K\in\mathcal{K}}
	\mathsf{B}^K\rho,
$$
where we denote by $\mathsf{B}^J\rho$ for $J\subseteq V$ the marginal of 
$\rho$ on $\bbX^J$.  That is, the blocking 
operation forces the underlying measure to be independent across different 
blocks in the partition $\mathcal{K}$.  The \emph{block particle filter} 
$\hat\pi_n^\mu$ is now defined by the recursion
$$
	\hat\pi_n^\mu := \mathsf{\hat F}_n\cdots
	\mathsf{\hat F}_1\mu,\qquad\qquad
	\mathsf{\hat F}_k := \mathsf{C}_k\mathsf{B}\mathsf{S}^N\mathsf{P}.
$$
This algorithm is very straightforward to implement in practice, cf.\ 
\cite{RvH13}.

\subsubsection*{Analysis of \cite{RvH13}}

We first introduce some notation.
Recall that in our model, each vertex $v$ interacts only with vertices in 
an $r$-neighborhood $N(v)$ in the previous time step, where the 
interaction radius $r$ is fixed throughout.  Given $J\subseteq V$, define 
the $r$-inner boundary as
$$
	\partial J:=\{v\in J:N(v)\not\subseteq J\}.
$$
Thus $\partial J$ is the subset of vertices in $J$ that can interact 
with vertices outside $J$ in one time step of the dynamics.  We also 
define the quantities
\begin{align*}
	|\mathcal{K}|_\infty &:= \max_{K\in\mathcal{K}}\card K,\\
	\Delta &:= \max_{v\in V}\card\{v'\in V:d(v,v')\le r\},\\
	\Delta_\mathcal{K} &:= \max_{K\in\mathcal{K}}
		\card\{K'\in\mathcal{K}:d(K,K')\le r\},
\end{align*}
where $d(J,J'):=\min_{v\in J}\min_{v'\in J'}d(v,v')$ for 
$J,J'\subseteq V$.  Thus $|\mathcal{K}|_\infty$ is the maximal size of a 
block, while $\Delta$ ($\Delta_\mathcal{K}$) is the maximal number of 
vertices (blocks) that interact with a single vertex (block) in one time 
step.  It should be noted that $r,\Delta,\Delta_\mathcal{K}$ are 
\emph{local} quantities that depend on the geometry but not on the size of 
the graph $G$.  We finally introduce, for each $J\subseteq I$, the 
following local distance between random measures $\rho$ and $\rho'$:
$$
	\tnorm{\rho-\rho'}_J :=
	\sup_{f\in\mathcal{X}^J:|f|\le 1}\mathbf{E}[
	|\rho f-\rho'f|^2]^{1/2}
$$
where $\mathcal{X}^J$ denotes the set of $J$-local measurable 
functions $f$ on $\bbX$.  For simplicity, we write
$\pi_n^x:=\pi_n^{\delta_x}$ and $\hat\pi_n^x:=\hat\pi_n^{\delta_x}$
when the filtering recursions are initialized at a point $x\in\bbX$.

We can now recall the main result of \cite{RvH13}.  

\begin{theorem}[\rm Theorem 2.1 in \cite{RvH13}]
\label{thm:RvH13}
There exists a constant $0<\varepsilon_0<1$, depending only on the local 
quantities $\Delta$ and $\Delta_\mathcal{K}$, such that the following 
holds.

Suppose there exist $\varepsilon_0<\varepsilon<1$ and $0<\kappa<1$ such 
that
$$
	\varepsilon\le p^v(x,z^v)\le\varepsilon^{-1},\qquad
	\kappa\le g^v(x^v,y^v)\le\kappa^{-1}\qquad
	\forall\,v\in V,~x,z\in\bbX,~y\in\bbY.
$$
Then for every $n\ge 0$, $\sigma\in\bbX$, $K\in\mathcal{K}$ and 
$J\subseteq K$ we have
$$
	\tnorm{\pi_n^\sigma-\hat\pi_n^\sigma}_J \le
	\alpha\card J\bigg[
	e^{-\beta_1d(J,\partial K)}+
	\frac{e^{\beta_2|\mathcal{K}|_\infty}}{N^{\frac{1}{2}}}
	\bigg],
$$
where the constants $0<\alpha,\beta_1,\beta_2<\infty$ depend only
on $\varepsilon$, $\kappa$, $r$, $\Delta$, and $\Delta_\mathcal{K}$.
\end{theorem}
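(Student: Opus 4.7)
The plan is to start from the telescoping identity
\[
\pi_n^\sigma - \hat\pi_n^\sigma = \sum_{k=1}^n \mathsf{F}_n\cdots \mathsf{F}_{k+1}\bigl(\mathsf{F}_k\hat\pi_{k-1}^\sigma - \hat{\mathsf{F}}_k\hat\pi_{k-1}^\sigma\bigr),
\]
which rewrites the global error as a sum of one-step errors, each propagated forward by the exact filter. The argument then splits into two ingredients: (i) a one-step error bound for $\mathsf{F}_k - \hat{\mathsf{F}}_k = \mathsf{C}_k(\mathrm{id}-\mathsf{B}\mathsf{S}^N)\mathsf{P}$ on local functions; and (ii) a spatial decay-of-correlations estimate for the forward filter $\mathsf{F}_n\cdots \mathsf{F}_{k+1}$ that controls how a perturbation localized near some region $U\subseteq V$ is felt on $J$.

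For the one-step error I would further decompose
\[
\mathrm{id} - \mathsf{B}\mathsf{S}^N = (\mathrm{id} - \mathsf{S}^N) + (\mathrm{id} - \mathsf{B})\mathsf{S}^N
\]
into a sampling part and a blocking part. The sampling contribution, evaluated on a test function supported in a single block $K$, gives a Monte Carlo error of order $1/\sqrt{N}$; pushing it through the subsequent conditioning step produces factors exponential in $|K|$ because of the $|K|$-fold product structure of $g$, which is absorbed into $e^{\beta_2|\mathcal{K}|_\infty}$. The blocking contribution measures how strongly the true one-step predictor fails to factorize across the partition $\mathcal{K}$; the underlying measure has its own decay of correlations, so the discrepancy between the full measure and its block-product on a region $J\subseteq K$ with $J$ far from $\partial K$ is exponentially small in $d(J,\partial K)$.

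For step (ii), the plan is to invoke the F\"ollmer form of the Dobrushin comparison theorem (Corollary \ref{cor:follmer}) applied to the iterated filter $\mathsf{F}_n\cdots\mathsf{F}_{k+1}\rho$ viewed as a probability measure with explicit single-site conditional distributions built from the local densities $p^v,g^v$ and the observed $Y_{k+1},\ldots,Y_n$. The uniform bounds $\varepsilon\le p^v,g^v\le\varepsilon^{-1}$ produce Dobrushin interaction coefficients $C_{ij}$ supported on pairs $(i,j)$ at graph distance at most $r$ in space and at most $1$ in time, and a direct estimate gives $\|C\|_\infty\le 1-c(\varepsilon)$ provided $\varepsilon$ exceeds a threshold $\varepsilon_0$ determined by $\Delta$ and $\Delta_\mathcal{K}$. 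The corresponding $D=\sum_n C^n$ then decays exponentially in the space-time graph metric, supplying the factors $\beta_1,\beta_2$ and allowing the two sources of error above to be summed over $k=1,\ldots,n$ uniformly in $n$.

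The main obstacle is step (ii): writing down a genuine single-site local update rule for the iterated filter and, crucially, verifying the Dobrushin uniqueness criterion $\|C\|_\infty<1$ \emph{uniformly in the horizon} $n-k$. The global normalization in $\mathsf{C}_k$ and the compounding across time tend to produce single-site influence coefficients that grow with the local degree $\Delta$, so only a sufficiently strong mixing hypothesis $\varepsilon>\varepsilon_0(\Delta,\Delta_\mathcal{K})$ keeps the classical Dobrushin condition satisfied. This restriction is precisely what the block-valued generalization in Theorem \ref{thm:main} is designed to relax, and it is the source of the qualitative improvement that the sequel will extract by replacing Corollary \ref{cor:follmer} with the full strength of Theorem \ref{thm:main}.
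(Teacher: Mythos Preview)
This theorem is not proved in the present paper; it is quoted from \cite{RvH13} as a baseline, and the paper only remarks that the original proof was ``based on repeated application of the classical Dobrushin comparison theorem (Corollary~\ref{cor:follmer}).'' The natural point of comparison is therefore the paper's proof of the stronger Theorem~\ref{thm:block}, which follows the same architecture as \cite{RvH13} but with the generalized comparison theorem in place of the classical one.

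Your high-level ingredients (telescoping, one-step error, Dobrushin-type propagation) are the right ones, but your decomposition is structurally different and contains a gap. Both \cite{RvH13} and the present paper insert the intermediate block filter $\tilde\pi_n$ (with $\mathsf{\tilde F}_k=\mathsf{C}_k\mathsf{B}\mathsf{P}$) and split $\tnorm{\pi_n-\hat\pi_n}_J\le\tnorm{\pi_n-\tilde\pi_n}_J+\tnorm{\tilde\pi_n-\hat\pi_n}_J$: the bias term compares two \emph{deterministic} space-time Gibbs measures via the comparison theorem, while the variance term is telescoped with the \emph{block} filter $\mathsf{\tilde F}_n\cdots\mathsf{\tilde F}_{k+1}$ as propagator, so that each one-step increment $\mathsf{C}_k\mathsf{B}(\mathrm{id}-\mathsf{S}^N)\mathsf{P}$ is a pure sampling error. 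Your single telescoping with the exact $\mathsf{F}$ and the split $(\mathrm{id}-\mathsf{S}^N)+(\mathrm{id}-\mathsf{B})\mathsf{S}^N$ forces the blocking operator to act on the random empirical measure $\mathsf{S}^N\mathsf{P}\hat\pi_{k-1}$, and an $N$-atom empirical measure has no intrinsic decay of correlations; your sentence ``the underlying measure has its own decay of correlations'' is not correct for this object. One can recenter around the true predictor $\mathsf{P}\hat\pi_{k-1}$, but the resulting cross-term essentially reinstates the $\tilde\pi$ split. A second difference is that you propagate with the exact $\mathsf{F}$, which requires a local stability estimate for the true filter; the paper instead exploits block-filter stability via the computation-tree representation (Proposition~\ref{prop:bfstab} and Corollary~\ref{cor:bfstab}), which is where the comparison theorem is actually applied.
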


The error bound in Theorem \ref{thm:RvH13} contains two terms.  The first 
term quantifies the bias introduced by the localization $\mathsf{B}$, 
which decreases when we take larger blocks.  The second term quantifies 
the variance introduced by the sampling $\mathsf{S}^N$, which decreases 
with increasing sample size $N$ but grows exponentially in the block size.  
Traditional particle filtering algorithms correspond to the choice of a 
single block $\mathcal{K}=\{V\}$, and in this case the error grows 
exponentially in the dimension $\card V$. To avoid this curse of 
dimensionality, we must tune the block size so as to optimize the tradeoff 
between bias and variance.  As all the constants in Theorem 
\ref{thm:RvH13} depend only on local quantities, the optimal block size 
results in a dimension-free error bound.  We refer to \cite{RvH13} for a 
full discussion.

\subsubsection*{Main result}

The intuition behind the block particle filtering algorithm is that the 
localization controls the sampling error (as it replaces the model 
dimension $\card V$ by the block size $|\mathcal{K}|_\infty$), while the 
decay of correlations property of the model controls the localization 
error (as it ensures that the effect of the localization decreases in the 
distance to the block boundary).  This intuition is clearly visible in the 
conclusion of Theorem \ref{thm:RvH13}. It is however not automatically the 
case that our model does indeed exhibit decay of correlations: when there 
are strong interactions between the vertices, phase transitions can arise 
and the decay of correlations can fail much as for standard models in 
statistical mechanics \cite{LMS90}, in which case we cannot expect to 
obtain dimension-free performance for the block particle filter.  Such 
phenomena are ruled out in Theorem \ref{thm:RvH13} by the assumption that 
$\varepsilon\le p^v\le\varepsilon^{-1}$ for $\varepsilon>\varepsilon_0$, 
which ensures that the interactions in our model are sufficiently weak.

It is notoriously challenging to obtain sharp quantitative results for 
interacting models, and it is unlikely that one could obtain realistic 
values for the constants in Theorem \ref{thm:RvH13} at the level of 
generality considered here.  More concerning, however, is that the weak 
interaction assumption of Theorem \ref{thm:RvH13} is already 
unsatisfactory at the \emph{qualitative} level.  Note that there is no 
interaction between the vertices in the extreme case $\varepsilon=1$; the 
assumption $\varepsilon>\varepsilon_0$ should be viewed as a perturbation 
of this situation (i.e., weak interactions).  However, setting 
$\varepsilon=1$ turns off not only the interaction between different 
vertices, but also the interaction between the same vertex at different 
times: in this setting the dynamics of the model become trivial. In 
contrast, one would expect that it is only the strength of the spatial 
interactions, and not the local dynamics, that is relevant for 
dimension-free errors, so that Theorem \ref{thm:RvH13} places an 
unnatural restriction on our understanding of block particle filters.

Our main result resolves this qualitative deficiency of Theorem 
\ref{thm:RvH13}.  Rather than assuming $p^v(x,z^v)\approx 1$ as in Theorem 
\ref{thm:RvH13}, we will assume only that the spatial interactions are 
weak in the sense that $p^v(x,z^v)\approx q^v(x^v,z^v)$, where the 
transition density $q^v$ describes the local dynamics at the vertex 
$v$ in the absence of interactions.

\begin{theorem}
\label{thm:block}
For any $0<\delta<1$ there exists $0<\varepsilon_0<1$, 
depending only on $\delta$ and $\Delta$, such that 
the following holds.
Suppose there exist $\varepsilon_0<\varepsilon<1$ and $0<\kappa<1$ so
that
\begin{align*}
	\varepsilon q^v(x^v,z^v) &\le p^v(x,z^v)\le
	\varepsilon^{-1} q^v(x^v,z^v), \\
	\delta &\le q^v(x^v,z^v)\le \delta^{-1},\\
	\kappa &\le g^v(x^v,y^v)\le\kappa^{-1}
\end{align*}
for every $v\in V$, $x,z\in\bbX$, $y\in\bbY$, where 
$q^v:\bbX^v\times\bbX^v\to\mathbb{R}_+$ is a transition density with
respect to $\psi^v$.
Then for every $n\ge 0$, $\sigma\in\bbX$, $K\in\mathcal{K}$ and 
$J\subseteq K$ we have
$$
	\tnorm{\pi_n^\sigma-\hat\pi_n^\sigma}_J \le
	\alpha\card J\bigg[
	e^{-\beta_1d(J,\partial K)}+
	\frac{e^{\beta_2|\mathcal{K}|_\infty}}{N^\gamma}
	\bigg],
$$
where $0<\gamma\le\frac{1}{2}$ and $0<\alpha,\beta_1,\beta_2<\infty$ 
depend only on $\delta$, $\varepsilon$, $\kappa$, $r$, $\Delta$, and 
$\Delta_\mathcal{K}$.
\end{theorem}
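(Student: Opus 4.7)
My plan is to follow the general blueprint of the proof of Theorem~\ref{thm:RvH13} in \cite{RvH13}, but replace the appeal to the classical single-site Dobrushin comparison theorem by the generalized Theorem~\ref{thm:main}---or, more naturally in the causal filtering setting, its one-sided counterpart Theorem~\ref{thm:oneside}. The first step is a standard telescoping decomposition
\[
  \pi_n^\sigma - \hat\pi_n^\sigma =
  \sum_{s=1}^n \mathsf{F}_n\cdots\mathsf{F}_{s+1}
  \bigl(\mathsf{F}_s - \hat{\mathsf{F}}_s\bigr)\hat\pi_{s-1}^\sigma,
\]
which reduces the problem to two ingredients: (a) a one-step local error estimate for $(\mathsf{F}_s - \hat{\mathsf{F}}_s)\hat\pi_{s-1}^\sigma$ restricted to a block $K\in\mathcal{K}$, quantifying block-localization bias and Monte Carlo variance, which can be imported essentially unchanged from \cite{RvH13} (its constants depend only on $\kappa$, $|\mathcal{K}|_\infty$, and $N$); and (b) a filter stability estimate controlling how a perturbation supported in one block at time $s$ propagates through $\mathsf{F}_n\cdots\mathsf{F}_{s+1}$ when evaluated against a $J$-local observable at time $n$, with decay that is exponential both in the elapsed time $n-s$ and in the spatial distance $d(J,\partial K)$. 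Only the stability estimate (b) requires genuinely new work.

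\textbf{Spatio-temporal Gibbs picture.} For (b) I regard the joint smoothed distribution of $X_s,\ldots,X_n$ given $Y_{s+1},\ldots,Y_n$ with initial law $\rho$ as a measure $\mu_\rho$ on the spatio-temporal site set $I = \{s,\ldots,n\}\times V$; the time-$n$ marginal of $\mu_\rho$ is precisely $\mathsf{F}_n\cdots\mathsf{F}_{s+1}\rho$. The single-site conditional of $\mu_\rho$ at $(k,v)$ is proportional to
\[
  p^v(x_{k-1},x^v_k)\prod_{v':\,v\in N(v')} p^{v'}(x_k,x^{v'}_{k+1})\, g^v(x^v_k,Y^v_k)\,\psi^v(dx^v_k)
\]
(with evident modifications at the time boundaries). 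The key observation is that the factorization hypothesis $\varepsilon q^v \le p^v \le \varepsilon^{-1} q^v$ lets me write $p^v(x,z^v) = q^v(x^v,z^v)\,r^v(x,z^v)$ with $\varepsilon\le r^v\le\varepsilon^{-1}$; substituting, the single-site conditional becomes the $q^v$-driven reference kernel $q^v(x^v_{k-1},x^v_k)\,g^v(x^v_k,Y^v_k)\,\psi^v(dx^v_k)$ multiplied by a product of at most $\Delta+1$ factors of $r^\bullet$, whose joint multiplicative oscillation is bounded by a constant $c(\varepsilon,\Delta)\to 1$ as $\varepsilon\to 1$, and which depends only on sites within a bounded space-time neighborhood of $(k,v)$.

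\textbf{Applying Theorem~\ref{thm:oneside}.} I then apply the one-sided comparison theorem to $\mu_\rho$ and $\mu_{\rho'}$ with the single-site cover $\mathcal{J}=\{\{(k,v)\}:(k,v)\in I\}$, time index $\tau((k,v))=k$, and uniform weights $w_{\{(k,v)\}}=1$, so that $W=I$. For the coupled update rule I take the total-variation-optimal coupling between the two single-site conditionals at $(k,v)$ corresponding to configurations $x,z$ that differ at a single site; because both conditionals share the common dominating kernel $q^v(x^v_{k-1},\cdot)\,g^v(\cdot,Y^v_k)\,\psi^v$, the strong temporal dependence carried by $q^v$ cancels out of the coupling cost, and the resulting entries of $R$ are bounded by $1-c(\varepsilon,\delta,\Delta)$ on a bounded space-time neighborhood and vanish elsewhere. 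For $\varepsilon$ sufficiently close to $1$ (depending only on $\delta$ and $\Delta$) this yields $\|R\|_\infty < 1$; Corollary~\ref{cor:uniq}(3) then verifies the hypotheses of Theorem~\ref{thm:oneside}, and a standard subcritical-branching argument on the space-time lattice shows that $D=\sum_n R^n$ has entries decaying exponentially in space-time graph distance. Substituting into Theorem~\ref{thm:oneside}---noting that $\delta_{(k,v)}f$ is supported on the top slice $k=n$ and $a_{(k',v')}$ is supported on the bottom slice $k'=s$ inside a single block $K$---delivers the joint exponential decay in $n-s$ and $d(J,\partial K)$ required by (b). Combining with (a) in the telescoping sum and optimizing the block size as in \cite{RvH13} yields the claimed bound with some $0<\gamma\le\frac12$.

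\textbf{Main obstacle.} The critical step, and the reason the classical comparison theorem is inadequate, is the construction of the coupled update rule. The naive single-site Wasserstein or total variation oscillation used in Corollary~\ref{cor:follmer} is of order $\delta^{-1}\varepsilon^{-1}$, driven by the temporal neighbor $x^v_{k-1}$ through $q^v$; this quantity does \emph{not} tend to $0$ as $\varepsilon\to 1$ and therefore cannot yield a valid Dobrushin-type condition under our assumptions. The freedom afforded by Theorem~\ref{thm:main}/Theorem~\ref{thm:oneside} to choose an arbitrary coupling $Q^J_{x,z}$ is precisely what permits the common $q^v$-factor to cancel, so that only the genuine spatial interaction strength enters the influence matrix $R$. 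The remaining technical points---verifying that all constants depend only on $\delta,\varepsilon,\kappa,r,\Delta,\Delta_\mathcal{K}$, handling the time boundaries $k=s$ and $k=n$, and converting the resulting bound on $|\mu_\rho f-\mu_{\rho'} f|$ into one in the $\tnorm{\,\cdot\,}_J$ seminorm on random measures---are routine by comparison but must be tracked carefully to preserve dimension-freeness.
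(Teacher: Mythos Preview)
There is a genuine gap in the key step. Your claim that ``the common $q^v$-factor cancels out of the coupling cost'' is correct for \emph{spatial} neighbors but fails for the temporal same-vertex neighbor, and this is fatal to the single-site approach. Concretely, take $j=(k-1,v)$. The two-sided conditional of $\mu_\rho$ at $(k,v)$ contains the factor $p^v(x_{k-1},x_k^v)=q^v(x_{k-1}^v,x_k^v)\,r^v(x_{k-1},x_k^v)$, and when $x$ and $z$ differ at $(k-1,v)$ the kernels $q^v(x_{k-1}^v,\cdot)$ and $q^v(z_{k-1}^v,\cdot)$ are \emph{different}; the factor does not cancel, and the best total-variation bound one gets is of order $1-\varepsilon^2\delta^2$ (see Cases~2 and~5 in the proof of Theorem~\ref{thm:bias}). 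The same holds for $j=(k+1,v)$. Hence with a single-site cover the row sum of $R$ is at least $2(1-\varepsilon^2\delta^2)+[\text{spatial terms}]$, which for $\delta^2<1/2$ exceeds $1$ even as $\varepsilon\to 1$, so $\|R\|_\infty<1$ cannot hold for arbitrary $\delta$. Note also that with the trivial metric and single-site cover, Theorem~\ref{thm:main} reduces exactly to Corollary~\ref{cor:follmer}: the freedom to choose couplings is already the total-variation-optimal one, so there is no extra leverage to exploit.

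The paper's remedy is precisely what your proposal is missing: one must use the block structure of the cover, taking $J=\{(l-1)q+1,\ldots,lq\}\times\{v\}$ with $q$ time steps per block. The conditional $\gamma^J_x$ is then a $q$-step inhomogeneous Markov chain in $\bbX^v$, and the coupling of Lemma~\ref{lem:markovminorize} gives a temporal boundary influence of order $(1-\varepsilon^2\delta^2)^q$, which can be made small for any $\delta$ by choosing $q$ large; the price is $O(q\Delta^2)$ spatial interaction terms of size $1-\varepsilon^{2(\Delta+1)}$, which are then controlled by taking $\varepsilon$ close to $1$. This tuning of $q$ against $\varepsilon$ is the mechanism that decouples the temporal parameter $\delta$ from the weak-interaction threshold $\varepsilon_0$. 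A secondary issue: you invoke Theorem~\ref{thm:oneside} but write down two-sided conditionals; the paper explicitly remarks that one-sided conditionals of the smoothing distribution are intractable here because they absorb all future observations, which is why the two-sided Theorem~\ref{thm:main} is used throughout section~\ref{sec:appl}.
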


In Theorem \ref{thm:block}, the parameter $\varepsilon$ controls the 
spatial correlations while the parameter $\delta$ controls the temporal 
correlations (in contrast to Theorem \ref{thm:RvH13}, where both are 
controlled simultaneously by $\varepsilon$).  The key point is that 
$\delta$ can be arbitrary, and only $\varepsilon$ must lie above the 
threshold $\varepsilon_0$.  That the threshold $\varepsilon_0$ depends on 
$\delta$ is natural: the more ergodic the dynamics, the more spatial 
interactions can be tolerated without losing decay of correlations.

The proof of Theorem \ref{thm:RvH13} was based on repeated application of 
the classical Dobrushin comparison theorem (Corollary \ref{cor:follmer}). 
While there are some significant differences between the details of the 
proofs, the essential improvement that makes it possible to prove Theorem 
\ref{thm:block} is that we can now exploit the generalized comparison 
theorem (Theorem \ref{thm:main}), which enables us to treat the spatial 
and temporal degrees of freedom on a different footing.

\subsubsection*{Organization of the proof}

As in \cite{RvH13}, we consider three recursions
$$
	\pi_n^\mu := \mathsf{F}_n\cdots\mathsf{F}_1\mu,\qquad\quad
	\tilde\pi_n^\mu := \mathsf{\tilde F}_n\cdots\mathsf{\tilde F}_1\mu,\qquad\quad
	\hat\pi_n^\mu := \mathsf{\hat F}_n\cdots\mathsf{\hat F}_1\mu,
$$
where $\mathsf{F}_n:=\mathsf{C}_n\mathsf{P}$, $\mathsf{\tilde 
F}_n:=\mathsf{C}_n\mathsf{BP}$, and $\mathsf{\hat 
F}_n:=\mathsf{C}_n\mathsf{BS}^N\mathsf{P}$. The filter $\pi_n^\mu$ and the 
block particle filter $\hat\pi_n^\mu$ were already defined above. The
block filter $\tilde\pi_n^\mu$ is intermediate: it inserts only the 
localization but not the sampling step in the filtering recursion.  
This allows to decompose the approximation error into two terms, one due 
to localization and one due to sampling
$$
	\tnorm{\pi_n^\mu-\hat\pi_n^\mu}_J \le
	\tnorm{\pi_n^\mu-\tilde\pi_n^\mu}_J +
	\tnorm{\tilde\pi_n^\mu-\hat\pi_n^\mu}_J
$$
by the triangle inequality.  In the proof of Theorem
\ref{thm:block}, each of the terms on the right will be considered 
separately.  The first term, which quantifies the bias due to the 
localization, will be bounded in section \ref{sec:bias}.  The second term, 
which quantifies the sampling variance, will be bounded in section 
\ref{sec:variance}.  Combining these two bounds completes the proof.

\subsection{Bounding the bias}
\label{sec:bias}

The goal of this section is to bound the bias term
$\|\pi_n^\sigma-\tilde\pi_n^\sigma\|_J$, where we denote by
$$
	\|\mu-\nu\|_J := \sup_{f\in\mathcal{X}^J:|f|\le 1}|\mu f-\nu f|
$$
the local total variation distance on the set of sites $J$.  [Note that 
$\|\mu-\nu\|_J\le K$ for some $K\in\mathbb{R}$ evidently implies
$\tnorm{\mu-\nu}_J\le K$; the random measure norm $\tnorm{\cdot}_J$ will 
be essential to bound the sampling error, but is irrelevant for 
the bias term.]

Let us first give an informal outline of the ideas behind the proof of the 
bias bound.  While the filter $\pi_n^\sigma$ is itself a high-dimensional 
distribution (defined on the set of sites $V$), we do not know how to 
obtain a tractable local update rule for it.  We therefore cannot 
apply Theorem \ref{thm:main} directly.  Instead, we will consider the 
\emph{smoothing} distribution
$$
	\rho = \mathbf{P}^\sigma[X_1,\ldots,X_n\in\mbox{}\cdot\mbox{}|
	Y_1,\ldots,Y_n],
$$
defined on the extended set of sites $I=\{1,\ldots,n\}\times V$ and 
configuration space $\bbS=\bbX^{n}$.  As $(X_k^v,Y_k^v)_{(k,v)\in I}$ is 
a Markov random field (cf.\ Figure \ref{fig:lfilt}), we can read off a 
local update rule for $\rho$ from the model definition.  At the same time, 
as $\pi_n^\sigma=\mathbf{P}^\sigma[X_n\in\mbox{}\cdot\mbox{}|Y_1,\ldots,Y_n]$ 
is a marginal of $\rho$, we immediately obtain estimates for 
$\pi_n^\sigma$ from estimates for $\rho$.

This basic idea relies on the probabilistic definition of the filter as a 
conditional distribution of a Markov random field: the filtering recursion 
(which was only introduced for computational purposes) plays no role in 
the analysis.  The block filter $\tilde\pi_n^\sigma$, on the other hand, 
is \emph{defined} in terms of a recursion and does not have an intrinsic 
probabilistic interpretation.  In order to handle the block filter, we 
will artificially cook up a probability measure $\mathbf{\tilde P}$ 
on $\bbS$ such that the block filter satisfies $\tilde\pi^\sigma_n = 
\mathbf{\tilde P}[X_n\in\mbox{}\cdot\mbox{}|Y_1,\ldots,Y_n]$, and set 
$$
	\tilde\rho = 
	\mathbf{\tilde P}[X_1,\ldots,X_n\in\mbox{}\cdot\mbox{}|
	Y_1,\ldots,Y_n].
$$
This implies in particular that
$$
	\|\pi_n^\sigma-\tilde\pi_n^\sigma\|_J =
	\|\rho-\tilde\rho\|_{\{n\}\times J},
$$
and we can now bound the bias term by applying Theorem \ref{thm:main}.

To apply the comparison theorem we must choose a good cover 
$\mathcal{J}$.  It is here that the full flexibility of Theorem 
\ref{thm:main}, as opposed to the classical comparison theorem, comes into 
play.  If we were to apply Theorem \ref{thm:main} with the singleton cover 
$\mathcal{J}_{\rm s}=\{\{i\}:i\in I\}$, we would recover the result of 
Theorem \ref{thm:RvH13}: in this case both the spatial and temporal 
interactions must be weak in order to ensure that 
$D=\sum_n(W^{-1}R)^n<\infty$. To avoid this problem, we work instead with 
larger blocks in the temporal direction.  That is, our blocks 
$J\in\mathcal{J}$ will have the form $J=\{k+1,\ldots,k+q\}\times\{v\}$ for 
an appropriate choice of the block length $q$.  The local update 
$\gamma^J_x$ now behaves as $q$ time steps of an ergodic Markov chain in 
$\bbX^v$: the temporal interactions decay geometrically with $q$, and can 
therefore be made arbitrarily small even if the interaction in one time 
step is arbitrarily strong.  On the other hand, when we increase $q$ there 
will be more nonzero terms in the matrix $W^{-1}R$.  We must therefore 
ultimately tune the block length $q$ appropriately to obtain the result of 
Theorem \ref{thm:block}.

\begin{remark}
The approach used here to bound the bias directly using the comparison 
theorem is different than the one used in \cite{RvH13}, which exploits the 
recursive property of the filter.  The latter approach has a broader 
scope, as it does not rely on the ability to express the approximate 
filter as the marginal of a random field as we do above: this could be 
essential for the analysis of more sophisticated algorithms that do not 
admit such a representation.  For the purposes of this paper, however, the 
present approach provides an alternative and somewhat shorter proof that 
is well adapted to the analysis of block particle filters.
\end{remark}

\begin{remark}
The problem under investigation is based on an interacting
Markov chain model, and is therefore certainly dynamical in nature.  
Nonetheless, our proofs use Theorem \ref{thm:main} and not the 
one-sided Theorem \ref{thm:oneside}.  If we were to approximate the 
dynamics of the Markov chain $X_n$ itself, it would be much 
more convenient to apply Theorem \ref{thm:oneside} as the model is 
already defined in terms of one-sided conditional distributions 
$p(x,z)\,\psi(dz)$.  Unfortunately, when we condition on the observations 
$Y_n$, the one-sided conditional distributions take a complicated form 
that incorporates all the information in the future observations, whereas
conditioning on all variables outside a block $J\in\mathcal{J}$ gives rise 
to relatively tractable expressions.  For this reason, the static 
``space-time'' picture remains the most convenient approach for the 
investigation of high-dimensional filtering problems.
\end{remark}

We now turn to the details of the proof.  We first state the 
main result of this section.

\begin{theorem}[\rm Bias term]
\label{thm:bias}
Suppose there exist $0<\varepsilon,\delta<1$ such that 
\begin{align*}
	\varepsilon q^v(x^v,z^v) &\le p^v(x,z^v)\le
	\varepsilon^{-1}q^v(x^v,z^v), \\
	\delta &\le q^v(x^v,z^v)\le \delta^{-1}
\end{align*}
for every $v\in V$ and $x,z\in\bbX$, where 
$q^v:\bbX^v\times\bbX^v\to\mathbb{R}_+$ is a transition density with
respect to $\psi^v$.  Suppose also that we can choose $q\in\mathbb{N}$ and 
$\beta>0$ such that
$$
	c:=
	3q\Delta^2
	e^{\beta(q+2r)}(1-\varepsilon^{2(\Delta+1)}) +
	e^{\beta}(1-\varepsilon^2\delta^2) +
	e^{\beta q}(1-\varepsilon^2\delta^2)^q <1.
$$
Then we have
$$
	\|\pi_n^\sigma-\tilde\pi_n^\sigma\|_J \le
	\frac{2e^{\beta r}}{1-c}\,(1-\varepsilon^{2(q+1)\Delta})\,
	\card J\,
	e^{-\beta d(J,\partial K)}
$$
for every $n\ge 0$, $\sigma\in\bbX$, $K\in\mathcal{K}$ and $J\subseteq K$.
\end{theorem}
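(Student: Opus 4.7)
The plan is to reinterpret both $\pi_n^\sigma$ and $\tilde\pi_n^\sigma$ as marginals of Markov random fields on the space-time site set $I := \{1,\ldots,n\}\times V$ with configuration space $\bbS := \bbX^n$, and then invoke Theorem \ref{thm:main} with a cover tailored to separate spatial and temporal interactions.  Let $\rho$ be the smoothing distribution $\mathbf{P}^\sigma[X_1,\ldots,X_n\in\mbox{}\cdot\mbox{}\,|\,Y_1,\ldots,Y_n]$; its marginal on $\{n\}\times V$ is $\pi_n^\sigma$.  Construct $\tilde\rho$ analogously for a modified hidden Markov model in which the one-step transition kernel is $\mathsf{BP}$ rather than $\mathsf{P}$ (equivalently, conditionally on $X_{k-1}$ the blocks $(X_k^K)_{K\in\mathcal{K}}$ are declared independent with the marginals of $\mathsf{P}(X_{k-1},\mbox{}\cdot\mbox{})$); its marginal on $\{n\}\times V$ is $\tilde\pi_n^\sigma$.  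Both measures are Markov random fields on the space-time graph with local conditional distributions readable from $p^v$, $g^v$ (for $\rho$) and their blocked counterparts (for $\tilde\rho$, which differ from those of $\rho$ only at sites whose $r$-neighborhood crosses some $\partial K$).  Thus $\|\pi_n^\sigma-\tilde\pi_n^\sigma\|_J = \|\rho-\tilde\rho\|_{\{n\}\times J}$, and we estimate the right-hand side via Theorem \ref{thm:main} applied with the trivial metric $\eta_i(x,z)=\mathbf{1}_{x\ne z}$.

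The cover $\mathcal{J}$ I use consists of time-blocks at a single vertex: $J = B\times\{v\}$ with $B$ a time-interval of length $q$ (partitioning the time axis per vertex), and weights $w_J=1$.  For such $J$, the local conditional $\gamma^J_x$ acts as $q$ successive steps of a single-vertex Markov chain with environment frozen to $x^{I\backslash J}$, and the minorization $\varepsilon q^v\le p^v\le\varepsilon^{-1}q^v$ together with $\delta\le q^v\le\delta^{-1}$ yields a one-step contraction factor $1-\varepsilon^2\delta^2$.  Taking maximal couplings $Q^J_{x,z}$ produces three types of contributions to the rows of $R$, corresponding exactly to the three summands of $c$: a spatial contribution from sites $j$ at neighboring vertices inside the block, bounded by $1-\varepsilon^{2(\Delta+1)}$ times the combinatorial/weighting factor $q\Delta^2 e^{\beta(q+2r)}$; a one-step temporal contribution from the time step adjacent to $B$, bounded by $e^\beta(1-\varepsilon^2\delta^2)$; and a cross-block temporal contribution from the far end of $B$, bounded by the iterated contraction $e^{\beta q}(1-\varepsilon^2\delta^2)^q$.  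The hypothesis $c<1$ is therefore a weighted $\|RW^{-1}\|_{1,\beta m}$ bound on the space-time graph with pseudometric $m((k,v),(k',v')):=d(v,v')+|k-k'|$, and Corollary \ref{cor:uniq}(6) then gives $D<\infty$ with entries satisfying $D_{ij}\le(1-c)^{-1}e^{-\beta m(i,j)}$.

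For the difference coefficients, $a_j$ vanishes unless $\tilde\gamma^J\ne\gamma^J$ for the block $J\ni j$, which requires $v$ to lie within $r$ of some $\partial K$; a direct coupling between $\gamma^J_x$ and $\tilde\gamma^J_x$, using the same minorization, bounds each nonzero contribution by $2(1-\varepsilon^{2(q+1)\Delta})$.  Plugging into Theorem \ref{thm:main} with $f=\mathbf{1}_A$ for $A\in\mathcal{X}^{\{n\}\times J}$, the estimate $\|\rho-\tilde\rho\|_{\{n\}\times J}\le\sum_{i,j}\delta_i f\cdot D_{ij}W_{jj}^{-1}a_j$ reduces to summing the exponentially weighted $D_{ij}$ over sites $i\in\{n\}\times J$ and $j$ supported within distance $r$ of $\partial K$; the resulting geometric sum produces the claimed $\frac{2e^{\beta r}}{1-c}\,(1-\varepsilon^{2(q+1)\Delta})\,\card J\,e^{-\beta d(J,\partial K)}$, where the $e^{\beta r}$ prefactor absorbs the gap between $\partial K$ and the support of $a_j$.

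The main obstacle is twofold.  First, one must realize $\tilde\rho$ as a bona fide probability measure on $\bbX^n$ that has both the correct marginal $\tilde\pi_n^\sigma$ at time $n$ and a clean local specification whose blocks differ from those of $\rho$ only at the sites prescribed above; this is routine but must be done carefully since the blocked recursion does not manifestly come from a joint random field.  Second, and more substantively, the block-contraction bound $(1-\varepsilon^2\delta^2)^q$ for the single-vertex local update over a time-block of length $q$ is the technical heart of the argument: it uses ergodicity of $q^v$ (factor $\delta^2$) together with the weak-interaction bound $p^v\le\varepsilon^{-1}q^v$ (factor $\varepsilon^2$), iterated $q$ times, and is precisely what allows $\varepsilon_0$ to depend on $\delta$ only through choosing $q$ large enough to overcome any fixed $\delta>0$.
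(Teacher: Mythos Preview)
Your overall strategy is essentially the paper's: realize $\pi_n^\sigma$ and $\tilde\pi_n^\sigma$ as marginals of smoothing measures $\rho,\tilde\rho$ on the space-time index set $I=\{1,\ldots,n\}\times V$, apply Theorem \ref{thm:main} with the partition into single-vertex temporal blocks of length $q$, and control the weighted matrix norm $\|C\|_{\beta m}$ with $m((k,v),(k',v'))=|k-k'|+d(v,v')$.  The paper carries out exactly this plan, with a five-case analysis for $C_{ij}$ (your three ``types'' are the three terms of $c$ after the paper merges cases and uses a convexity argument for the temporal contributions at the two block endpoints).

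There is, however, a genuine gap in your construction of $\tilde\rho$.  You propose to use the Markov chain with one-step kernel $\mathsf{BP}$, i.e.\ to declare the blocks $(X_k^K)_{K\in\mathcal{K}}$ conditionally independent given $X_{k-1}$ with the $K$-marginals of $\mathsf{P}(X_{k-1},\cdot)$.  But $\mathsf{P}\delta_x(dz)=\prod_{v\in V}p^v(x,z^v)\,\psi^v(dz^v)$ is \emph{already} a product measure over $V$, so $\mathsf{B}\mathsf{P}\delta_x=\mathsf{P}\delta_x$ for every $x$, and your modified chain coincides with the original one.  Consequently your $\tilde\rho$ equals $\rho$, its time-$n$ marginal is $\pi_n^\sigma$ rather than $\tilde\pi_n^\sigma$, and the comparison becomes vacuous.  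The blocking operator has an effect on $\mathsf{P}\mu$ only when $\mu$ is not a point mass; it cannot be encoded as a modification of the pointwise transition kernel.

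The paper circumvents this by constructing a \emph{time-inhomogeneous} transition density $\tilde p_k$ that depends on $\tilde\pi_{k-1}^\sigma$: for $v\in\partial K$ one replaces $p^v(x,z^v)$ by an average over the out-of-block coordinates,
\[
h^K_k(x,z^{\partial K})=\int\tilde\pi_{k-1}^\sigma(d\omega)\prod_{v\in\partial K}p^v(x^K\omega^{V\backslash K},z^v),
\]
and leaves $p^v$ unchanged for $v\in K\backslash\partial K$.  One then checks that $\mathsf{\tilde P}_k\tilde\pi_{k-1}^\sigma=\mathsf{BP}\tilde\pi_{k-1}^\sigma$ precisely because $\tilde\pi_{k-1}^\sigma$ is a product across blocks, so the resulting inhomogeneous hidden Markov model has conditional time-$n$ marginal $\tilde\pi_n^\sigma$.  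This construction also makes transparent where $\gamma^J$ and $\tilde\gamma^J$ differ: only at vertices $v\in\partial^2 K:=\bigcup_{w\in\partial K}N(w)\cap K$, since $h^K_k$ depends on $x$ only through $x^{\partial^2 K}$.  The resulting bound $b_i\le 1-\varepsilon^{2(q+1)\Delta}$ for $v(i)\in\partial^2 K$ and the inequality $d(J,\partial^2 K)\ge d(J,\partial K)-r$ then produce the $e^{\beta r}$ prefactor.  Once you replace your $\tilde\rho$ by this construction, the remainder of your outline goes through as in the paper.
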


In order to use the comparison theorem, we must have a method to construct 
couplings.  Before we proceed to the proof of Theorem \ref{thm:bias}, we 
begin by formulating two elementary results that will provide us with the 
necessary tools for this purpose.

\begin{lem}
\label{lem:minorize}
If probability measures $\mu,\nu,\gamma$ satisfy
$\mu(A)\ge\alpha\gamma(A)$ and 
$\nu(A)\ge\alpha\gamma(A)$ for every measurable set $A$, there 
is a coupling $Q$ of $\mu,\nu$ with
$\int \mathbf{1}_{x\ne z}\,Q(dx,dz)\le 1-\alpha$.
\end{lem}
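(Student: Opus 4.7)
The plan is to use the standard splitting construction associated with minorization (sometimes called the $\gamma$-coupling or common component coupling). The idea is that the hypothesis says $\mu$ and $\nu$ share a common component of total mass $\alpha$, namely $\alpha\gamma$; on this shared component we can couple them perfectly, and on the remainder we do whatever we like.

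First I would dispose of the trivial case: if $\alpha \geq 1$, then $\mu(A) \geq \gamma(A)$ and $\nu(A) \geq \gamma(A)$ for all $A$; since all three are probability measures this forces $\mu = \nu = \gamma$, and the coupling $Q$ supported on the diagonal (i.e., the pushforward of $\gamma$ under $x \mapsto (x,x)$) satisfies $Q(\{x\ne z\})=0\le 1-\alpha$. For the main case $0 \leq \alpha < 1$, the minorization hypothesis allows me to define signed measures $\mu - \alpha\gamma$ and $\nu - \alpha\gamma$, which are in fact nonnegative by assumption and have total mass $1-\alpha$. I set
\[
	\mu' := \frac{\mu - \alpha\gamma}{1-\alpha}, \qquad
	\nu' := \frac{\nu - \alpha\gamma}{1-\alpha},
\]
both of which are bona fide probability measures.

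Next I construct the coupling explicitly. For measurable $A,B\subseteq\Omega$, define
\[
	Q(A\times B) := \alpha\,\gamma(A\cap B) + (1-\alpha)\,\mu'(A)\,\nu'(B),
\]
and extend to a probability measure on $\Omega\times\Omega$ by Carathéodory (the first term is a probability measure concentrated on the diagonal via the pushforward of $\gamma$ under $x\mapsto(x,x)$, and the second is a product measure). The marginals are immediate:
\[
	Q(A\times\Omega) = \alpha\gamma(A) + (1-\alpha)\mu'(A) = \mu(A),
\]
and similarly $Q(\Omega\times B) = \nu(B)$. Finally, the mass off the diagonal is controlled by the mass of the second component, so
\[
	\int \mathbf{1}_{x\ne z}\,Q(dx,dz) \le (1-\alpha)\,(\mu'\otimes\nu')(\Omega\times\Omega) = 1-\alpha,
\]
which is the required estimate.

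There is essentially no obstacle here; the only minor point to handle carefully is that we want the diagonal $\{(x,x):x\in\Omega\}$ to support a probability measure even when it is not measurable in the product $\sigma$-algebra. This is avoided by defining $Q$ directly as above on the product $\sigma$-algebra without reference to the diagonal set itself, so the construction goes through for arbitrary measurable spaces $\Omega$.
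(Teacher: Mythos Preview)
Your proof is correct and follows essentially the same construction as the paper: define the residual probability measures $(\mu-\alpha\gamma)/(1-\alpha)$ and $(\nu-\alpha\gamma)/(1-\alpha)$, then couple via the mixture of the diagonal pushforward of $\gamma$ with weight $\alpha$ and the product of the residuals with weight $1-\alpha$. The paper states this in one line via the action on test functions rather than on rectangles, and omits the edge case $\alpha=1$ and the measurability remark, but the argument is identical.
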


\begin{proof}
Define $\tilde\mu = (\mu-\alpha\gamma)/(1-\alpha)$,
$\tilde\nu = (\nu-\alpha\gamma)/(1-\alpha)$, and let
$$
	Qf = \alpha\int f(x,x)\,\gamma(dx) +
	(1-\alpha)\int f(x,z)\,\tilde\mu(dx)\,\tilde\nu(dz).
$$
The claim follows readily.
\end{proof}

\begin{lem}
\label{lem:markovminorize}
Let $P_1,\ldots,P_q$ be transition kernels on a measurable space 
$\mathbb{T}$, and define
$$
	\mu_x(d\omega_1,\ldots,d\omega_q) = 
	P_1(x,d\omega_1)P_2(\omega_1,d\omega_2)\cdots
	P_q(\omega_{q-1},d\omega_q).
$$
Suppose that there exist probability measures $\nu_1,\ldots,\nu_q$ on
$\mathbb{T}$ such that $P_i(x,A)\ge\alpha\nu_i(A)$ for every measurable
set $A$, $x\in\mathbb{T}$, and $i\le q$.  Then there exists for 
every $x,z\in\mathbb{T}$ a coupling $Q_{x,z}$ of $\mu_x$ and $\mu_z$ such
that $\int \mathbf{1}_{\omega_i\ne\omega_i'}\,
Q_{x,z}(d\omega,d\omega') \le (1-\alpha)^i$
for every $i\le q$.
\end{lem}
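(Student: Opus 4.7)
The plan is to construct $Q_{x,z}$ as the joint law of a pair of coupled chains $(\omega_i,\omega_i')_{i\le q}$, built inductively one step at a time, in such a way that once the two components agree they remain equal forever. The bound $(1-\alpha)^i$ then follows because each step, conditional on disagreement so far, produces agreement with probability at least $\alpha$.

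Concretely, I would proceed as follows. At step $i$, suppose we have already constructed $(\omega_{i-1},\omega_{i-1}')$. On the event $\{\omega_{i-1}=\omega_{i-1}'\}$, I would draw $\omega_i$ from $P_i(\omega_{i-1},\cdot)$ and set $\omega_i':=\omega_i$; this preserves the marginal laws because both components evolve by the same kernel. On the event $\{\omega_{i-1}\ne\omega_{i-1}'\}$, I would apply Lemma \ref{lem:minorize} to the pair $(\mu,\nu,\gamma)=(P_i(\omega_{i-1},\cdot),P_i(\omega_{i-1}',\cdot),\nu_i)$, which by the minorization hypothesis yields a coupling $Q^i$ of the two conditional laws with $Q^i(\omega_i\ne\omega_i')\le 1-\alpha$. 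Gluing these conditional couplings together (using a regular conditional probability on the product space, which exists on standard spaces; in general one uses the product-measurable construction afforded by the explicit form of $Q^i$ in the proof of Lemma \ref{lem:minorize}) defines a joint law of $(\omega_i,\omega_i')$ whose marginals match those of $\mu_x$ and $\mu_z$ through step $i$.

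The key observation is that by construction, the event $\{\omega_i=\omega_i'\}$ is absorbing: once the chains coalesce they continue together. Hence, letting $\tau:=\inf\{i:\omega_i=\omega_i'\}$, we have
$$
  \mathbf{P}[\omega_i\ne\omega_i'] = \mathbf{P}[\tau>i]
  = \prod_{k=1}^{i}\mathbf{P}[\omega_k\ne\omega_k'\mid \omega_{k-1}\ne\omega_{k-1}']
  \le (1-\alpha)^i,
$$
using at each step the bound from Lemma \ref{lem:minorize}. This is exactly the claimed estimate. Finally, I would verify by induction on $i$ that the marginal of $(\omega_1,\ldots,\omega_i)$ under the constructed measure equals $\mu_x$ restricted to its first $i$ coordinates, and similarly for $\mu_z$; this is immediate at each step from the fact that the conditional coupling $Q^i$ (or the diagonal draw on the coalesced set) has the correct marginals $P_i(\omega_{i-1},\cdot)$ and $P_i(\omega_{i-1}',\cdot)$.

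The only genuine subtlety is measurability of the coupling in the input pair $(\omega_{i-1},\omega_{i-1}')$, since Lemma \ref{lem:minorize} by itself does not assert this. However, the explicit formula in the proof of Lemma \ref{lem:minorize} expresses $Q^i$ as an affine combination of products of $P_i(\omega_{i-1},\cdot)$, $P_i(\omega_{i-1}',\cdot)$, and $\nu_i$, so the dependence on $(\omega_{i-1},\omega_{i-1}')$ is automatically measurable; this is the main (and only minor) technical point to verify before iterating. Once this is in hand, the construction goes through and the proof is complete.
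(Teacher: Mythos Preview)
Your proposal is correct and follows essentially the same approach as the paper. The paper writes out the one-step coupling kernels $\tilde Q_i$ explicitly (precisely the construction from the proof of Lemma~\ref{lem:minorize}, augmented on the diagonal so that coalesced pairs stay together), composes them as $Q_{x,z}=\tilde Q_1\tilde Q_2\cdots\tilde Q_q$, and concludes from the one-step bound $\int \mathbf{1}_{x'\ne z'}\,\tilde Q_i(x,z,dx',dz')\le(1-\alpha)\mathbf{1}_{x\ne z}$; your inductive description and coalescence-time argument are a verbal rendering of the same construction and the same iteration.
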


\begin{proof}
Define the transition kernels 
$\tilde P_i = (P_i-\alpha\nu_i)/(1-\alpha)$ and
\begin{align*}
	\tilde Q_if(x,z) =
	\alpha\int f(x',x')\,\nu_i(dx') 
	&+ (1-\alpha)\,\mathbf{1}_{x\ne z}
	\int f(x',z')\,\tilde P_i(x,dx')\,\tilde P_i(z,dz') \\ & +
	(1-\alpha)\,\mathbf{1}_{x=z}\int f(x',x')\,\tilde P_i(x,dx').
\end{align*}
Then $\tilde Q_i(x,z,\mbox{}\cdot\mbox{})$ is a coupling of 
$P_i(x,\mbox{}\cdot\mbox{})$ and $P_i(z,\mbox{}\cdot\mbox{})$.  Now define
$$
	Q_{x,z}(d\omega_1,d\omega_1',\ldots,d\omega_q,d\omega_q') = 
	\tilde Q_1(x,z,d\omega_1,d\omega_1')\cdots
	\tilde Q_q(\omega_{q-1},\omega_{q-1}',d\omega_q,d\omega_q').
$$
The result follows readily once we note that
$\int \mathbf{1}_{x'\ne z'}\,\tilde Q_i(x,z,dx',dz') \le
(1-\alpha)\,\mathbf{1}_{x\ne z}$.
\end{proof}

We can now proceed to the proof of Theorem \ref{thm:bias}.

\begin{proof}[Proof of Theorem \ref{thm:bias}]
We begin by constructing a measure $\mathbf{\tilde P}$ that 
allows to describe the block filter $\tilde\pi_n^\sigma$ as a conditional
distribution, as explained above.  We fix the initial condition 
$\sigma\in\bbX$ throughout the proof (the dependence of various quantities 
on $\sigma$ is implicit).

To construct $\mathbf{\tilde P}$, define for $K\in\mathcal{K}$ and $n\ge 1$
the function
$$
	h^K_n(x,z^{\partial K}) := 
	\int \tilde\pi_{n-1}^\sigma(d\omega)
	\prod_{v\in\partial K}p^v(x^K\omega^{V\backslash K},z^v).
$$
Evidently $h^K_n$ is a transition density with respect to 
$\bigotimes_{v\in\partial K}\psi^v$.  Let
$$
	\tilde p_n(x,z) := \prod_{K\in\mathcal{K}}
	h^K_n(x,z^{\partial K})\prod_{v\in K\backslash\partial K}
	p^v(x,z^v),
$$
and define $\mathsf{\tilde P}_n\mu(dx') := \psi(dx')\int \tilde 
p_n(x,x')\,\mu(dx)$.  Then $\mathsf{\tilde P}_n\tilde\pi_{n-1}^\sigma =
\mathsf{BP}\tilde\pi_{n-1}^\sigma$ by construction for every $n\ge 1$, as
$\tilde\pi_{n-1}^\sigma$ is a product measure across blocks.  Thus we have
$$
	\pi_n^\sigma = \mathsf{C}_n\mathsf{P}\cdots
	\mathsf{C}_1\mathsf{P}\delta_\sigma,\qquad\qquad
	\tilde\pi_n^\sigma = \mathsf{C}_n\mathsf{\tilde P}_n\cdots
	\mathsf{C}_1\mathsf{\tilde P}_1\delta_\sigma.
$$
In particular, the filter and the block filter satisfy the same recursion 
with different transition densities $p$ and $\tilde p_n$.  We can 
therefore interpret the block filter as the filter corresponding to a 
time-inhomogeneous Markov chain with transition densities $\tilde p_n$:
that is, if we set
\begin{multline*}
	\mathbf{\tilde P}[(X_1,\ldots,X_n,Y_1,\ldots,Y_n)\in A] := \\
	\int \mathbf{1}_A(x_1,\ldots,x_n,y_1,\ldots,y_n)\,
	\tilde p_1(\sigma,x_1)
	\prod_{k=2}^n \tilde p_k(x_{k-1},x_k)\,g(x_k,y_k)\,
	\psi(dx_k)\,\varphi(dy_k)
\end{multline*}
(note that $\mathbf{P}^\sigma$ satisfies the same formula where 
$\tilde p_k$ is replaced by $p$), we can write
$$
	\tilde\pi_n^\sigma = \mathbf{\tilde P}[X_n\in\mbox{}\cdot\mbox{}|
	Y_1,\ldots,Y_n].
$$
Let us emphasize that the transition densities $\tilde p_k$ and operators 
$\mathsf{\tilde P}_k$ themselves depend on the initial condition $\sigma$, 
which is certainly not the case for the regular filter.  However, since 
$\sigma$ is fixed throughout the proof, this is irrelevant for our 
computations. 

From now on we fix $n\ge 1$ in the remainder of the proof.  Let
$$
	\rho = \mathbf{P}^\sigma[X_1,\ldots,X_n\in\mbox{}\cdot\mbox{}|
	Y_1,\ldots,Y_n],\qquad\quad
	\tilde\rho = \mathbf{\tilde P}[X_1,\ldots,X_n\in\mbox{}\cdot\mbox{}|
	Y_1,\ldots,Y_n].
$$
Then $\rho$ and $\tilde\rho$ are probability measures on $\bbS=\bbX^n$, 
which is naturally indexed by the set of sites $I=\{1,\ldots,n\}\times V$
(the observation sequence on which we condition is arbitrary and can be 
considered fixed throughout the proof).  The proof now proceeds by 
applying Theorem \ref{thm:main} to $\rho,\tilde\rho$, the main difficulty 
being the construction of a coupled update rule.

Fix $q\ge 1$.  We first specify the cover 
$\mathcal{J}=\{J_l^v:1\le l\le\lceil n/q\rceil,v\in V\}$ as follows:
$$
	J_l^v := \{(l-1)q+1,\ldots,lq\wedge n\}\times\{v\}\qquad
	\mbox{for }1\le l\le \lceil n/q\rceil,~v\in V.
$$
We choose the natural local updates
$\gamma^J_x(dz^J) = \rho(dz^J|x^{I\backslash J})$ and 
$\tilde\gamma^J_x(dz^J) = \tilde\rho(dz^J|x^{I\backslash J})$,
and postpone the construction of the coupled updates $Q_{x,z}^J$ and $\hat 
Q_x^J$ to be done below.  Now note that the cover $\mathcal{J}$ is in fact 
a partition of $I$; thus Theorem \ref{thm:main} yields
$$
	\|\pi_n^\sigma-\tilde\pi_n^\sigma\|_J =
	\|\rho-\tilde\rho\|_{\{n\}\times J} \le
	2\sum_{i\in \{n\}\times J}\sum_{j\in I} D_{ij}\,b_j
$$
provided that $D=\sum_{k=0}^\infty C^k<\infty$ (cf.\ Corollary 
\ref{cor:uniq}), where
$$
	C_{ij} = \sup_{\substack{
				x,z\in\bbS:\\
				x^{I\backslash\{j\}}=z^{I\backslash\{j\}}
		}}
		\int \mathbf{1}_{\omega_i\ne\omega_i'}
		\,Q_{x,z}^{J(i)}(d\omega,d\omega'),\qquad
	b_i = \sup_{x\in\bbS}
	\int \mathbf{1}_{\omega_i\ne\omega_i'}
                \,\hat Q_{x}^{J(i)}(d\omega,d\omega'),
$$
and where we write $J(i)$ for the unique block $J\in\mathcal{J}$ that
contains $i\in I$.  To put this bound to good use, we must introduce 
coupled updates $Q_{x,z}^J$ and $\hat Q_x^J$ and
estimate $C_{ij}$ and $b_j$.

Let us fix until further notice a block $J=J_l^v\in\mathcal{J}$.
We will consider first the case that $1<l<\lceil n/q\rceil$; the
cases $l=1,\lceil n/q\rceil$ will follow subsequently using the identical 
proof.  Let $s=(l-1)q$.  Then we can compute explicitly the local update
rule
\begin{align*}
	&\gamma_x^J(A) = \\
	&\quad \frac{
		\int
		\mathbf{1}_A(x^J)\,p^v(x_s,x_{s+1}^v)
		\prod_{m=s+1}^{s+q}
		g^v(x_m^v,Y_m^v)
		\prod_{w\in N(v)}
		p^w(x_{m},x_{m+1}^w)\,
		\psi^v(dx_m^v)
	}{
		\int
		p^v(x_s,x_{s+1}^v)
		\prod_{m=s+1}^{s+q}
		g^v(x_m^v,Y_m^v)
		\prod_{w\in N(v)}
		p^w(x_{m},x_{m+1}^w)\,
		\psi^v(dx_m^v)
	}
\end{align*}
using Bayes' formula, the definition of $\mathbf{P}^\sigma$ (in the same 
form as the above definition of $\mathbf{\tilde P}$), and 
that $p^v(x,z^v)$ depends only on $x^{N(v)}$.  We now construct 
couplings $Q_{x,z}^J$ of $\gamma^J_x$ and $\gamma^J_z$ where $x,z$ differ 
only at the site $j=(k,w)\in I$.  We distinguish the following cases:
\begin{enumerate}
\item $k=s$, $w\in N(v)\backslash\{v\}$;
\item $k=s$, $w=v$;
\item $k\in \{s+1,\ldots,s+q\}$, $w\in \bigcup_{u\in 
N(v)}N(u)\backslash\{v\}$;
\item $k=s+q+1$, $w\in N(v)\backslash\{v\}$;
\item $k=s+q+1$, $w=v$.
\end{enumerate}
It is easily verified by inspection that $\gamma^J_x$ does not depend on
$x_k^w$ except in one of the above cases.  Thus when $j$ 
satisfies none of the 
above conditions, we can set $C_{ij}=0$ for $i\in J$.

\textbf{Case 1.}  
Note that
\begin{align*}
	&\gamma_x^J(A) \ge \\
	&\quad \varepsilon^2\, \frac{
		\int
		\mathbf{1}_A(x^J)\,q^v(x_s^v,x_{s+1}^v)
		\prod_{m=s+1}^{s+q}
		g^v(x_m^v,Y_m^v)
		\prod_{w\in N(v)}
		p^w(x_{m},x_{m+1}^w)\,
		\psi^v(dx_m^v)
	}{
		\int
		q^v(x_s^v,x_{s+1}^v)
		\prod_{m=s+1}^{s+q}
		g^v(x_m^v,Y_m^v)
		\prod_{w\in N(v)}
		p^w(x_{m},x_{m+1}^w)\,
		\psi^v(dx_m^v)
	},
\end{align*}
and the right hand side does not depend on $x_s^w$ for $w\ne v$.
Thus whenever $x,z\in\bbS$ satisfy $x^{I\backslash\{j\}}=
z^{I\backslash\{j\}}$ for $j=(s,w)$ with $w\in N(v)\backslash\{v\}$, we 
can construct a coupling $Q^J_{x,z}$ using Lemma \ref{lem:minorize} such 
that $C_{ij}\le 1-\varepsilon^2$ for every $i\in J$.

\textbf{Case 2.}  Define the transition kernels on $\bbX^v$
\begin{align*}
	&P_{k,x}(\omega,A) = \\
	&\quad \frac{
		\int
		\mathbf{1}_A(x^v_k)\,
		p^v(\omega x_{k-1}^{V\backslash\{v\}},x_k^v)
		\prod_{m=k}^{s+q}
		g^v(x_m^v,Y_m^v)
		\prod_{w\in N(v)}
		p^w(x_{m},x_{m+1}^w)\,
		\psi^v(dx_m^v)
	}{
		\int
		p^v(\omega x_{k-1}^{V\backslash\{v\}},x_k^v)
		\prod_{m=k}^{s+q}
		g^v(x_m^v,Y_m^v)
		\prod_{w\in N(v)}
		p^w(x_{m},x_{m+1}^w)\,
		\psi^v(dx_m^v)
	}
\end{align*}
for $k=s+1,\ldots,s+q$.  By construction, $P_{k,x}(x_{k-1}^v,dx_k^v)=
\gamma^J_x(dx_k^v|x_{s+1}^v,\ldots,x_{k-1}^v)$, so we are in the
setting of Lemma \ref{lem:markovminorize}.  Moreover, we can estimate
$$
	P_{k,x}(\omega,A) \ge 
	\varepsilon^2\delta^2\,\frac{
		\int
		\mathbf{1}_A(x^v_k)
		\prod_{m=k}^{s+q}
		g^v(x_m^v,Y_m^v)
		\prod_{w\in N(v)}
		p^w(x_{m},x_{m+1}^w)\,
		\psi^v(dx_m^v)
	}{
		\int
		\prod_{m=k}^{s+q}
		g^v(x_m^v,Y_m^v)
		\prod_{w\in N(v)}
		p^w(x_{m},x_{m+1}^w)\,
		\psi^v(dx_m^v)
	},
$$
where the right hand side does not depend on $\omega$.
Thus whenever $x,z\in\bbS$ satisfy $x^{I\backslash\{j\}}=
z^{I\backslash\{j\}}$ for $j=(s,v)$, we can construct a coupling 
$Q^J_{x,z}$ using Lemma \ref{lem:markovminorize} such 
that $C_{ij}\le (1-\varepsilon^2\delta^2)^{k-s}$ for $i=(k,v)$ with
$k=s+1,\ldots,s+q$.

\textbf{Case 3.} Fix $k\in\{s+1,\ldots,s+q\}$ and $u\ne v$. Note that
\begin{align*}
	&\gamma_x^J(A) \ge 
	\varepsilon^{2(\Delta+1)}\times\mbox{}\\
	&\quad 
	\frac{
		\int
		\mathbf{1}_A(x^J)\,p^v(x_s,x_{s+1}^v)
		\prod_{m=s+1}^{s+q}
		g^v(x_m^v,Y_m^v)
		\prod_{w\in N(v)}
		\beta^w_m(x_{m},x_{m+1}^w)\,
		\psi^v(dx_m^v)
	}{
		\int
		p^v(x_s,x_{s+1}^v)
		\prod_{m=s+1}^{s+q}
		g^v(x_m^v,Y_m^v)
		\prod_{w\in N(v)}
		\beta^w_m(x_{m},x_{m+1}^w)\,
		\psi^v(dx_m^v)
	}
\end{align*}
where we set $\beta^w_m(x_m,x^w_{m+1})=q^w(x_m^w,x^w_{m+1})$
if either $m=k$ or $m=k-1$ and $w=u$, and
$\beta^w_m(x_m,x^w_{m+1})=p^w(x_m,x^w_{m+1})$ otherwise.  The right hand
side of this expression does not depend on $x_k^u$ as the terms
$q^w(x_m^w,x^w_{m+1})$ for $w\ne v$ cancel in the numerator and 
denominator.
Thus whenever $x,z\in\bbS$ satisfy $x^{I\backslash\{j\}}=
z^{I\backslash\{j\}}$ for $j=(k,u)$, we
can construct a coupling $Q^J_{x,z}$ using Lemma \ref{lem:minorize} such 
that $C_{ij}\le 1-\varepsilon^{2(\Delta+1)}$ for every $i\in J$.

\textbf{Case 4.} Let $u\in N(v)\backslash v$.  Note that
\begin{align*}
	&\gamma_x^J(A) \ge \\
	&\quad \varepsilon^2\, \frac{
		\int
		\mathbf{1}_A(x^J)\,p^v(x_s^v,x_{s+1}^v)
		\prod_{m=s+1}^{s+q}
		g^v(x_m^v,Y_m^v)
		\prod_{w\in N(v)}
		\beta_m^w(x_{m},x_{m+1}^w)\,
		\psi^v(dx_m^v)
	}{
		\int
		p^v(x_s^v,x_{s+1}^v)
		\prod_{m=s+1}^{s+q}
		g^v(x_m^v,Y_m^v)
		\prod_{w\in N(v)}
		\beta_m^w(x_{m},x_{m+1}^w)\,
		\psi^v(dx_m^v)
	},
\end{align*}
where we set $\beta^w_m(x_m,x^w_{m+1})=q^w(x_m^w,x^w_{m+1})$
if $m=s+q$ and $w=u$, and we set 
$\beta^w_m(x_m,x^w_{m+1})=p^w(x_m,x^w_{m+1})$ 
otherwise.  The right hand side does not depend on $x_{s+q+1}^u$
as the term $q^u(x_{s+q}^u,x^u_{s+q+1})$ cancels in the numerator and
denominator.  Thus whenever $x,z\in\bbS$ satisfy $x^{I\backslash\{j\}}=
z^{I\backslash\{j\}}$ for $j=(s+q+1,u)$, we can construct a coupling 
$Q^J_{x,z}$ using Lemma \ref{lem:minorize} such that $C_{ij}\le 
1-\varepsilon^2$ for every $i\in J$.

\textbf{Case 5.}  Define for $k=s+1,\ldots,s+q$ the transition kernels on 
$\bbX^v$
\begin{align*}
	&P_{k,x}(\omega,A) = \\
	&\quad \frac{
		\int
		\mathbf{1}_A(x^v_k)\,
		p^v(x_s,x_{s+1}^v)
		\prod_{m=s+1}^{k}
		g^v(x_m^v,Y_m^v)
		\prod_{w\in N(v)}
		\beta^w_{m,\omega}(x_{m},x_{m+1}^w)\,
		\psi^v(dx_m^v)
	}{
		\int
		p^v(x_s,x_{s+1}^v)
		\prod_{m=s+1}^{k}
		g^v(x_m^v,Y_m^v)
		\prod_{w\in N(v)}
		\beta^w_{m,\omega}(x_{m},x_{m+1}^w)\,
		\psi^v(dx_m^v)
	},
\end{align*}
where we set $\beta^w_{m,\omega}(x_{m},x_{m+1}^w)=
p^v(x_k,\omega)$ if $m=k$ and $w=v$, and
$\beta^w_{m,\omega}(x_{m},x_{m+1}^w)=p^w(x_{m},x_{m+1}^w)$ otherwise.
By construction, $P_{k,x}(x_{k+1}^v,dx_k^v)=
\gamma^J_x(dx_k^v|x_{k+1}^v,\ldots,x_{s+q}^v)$, so we are in the
setting of Lemma \ref{lem:markovminorize}.  
Moreover, we can estimate
\begin{align*}
	&P_{k,x}(\omega,A) \ge \varepsilon^2\delta^2\times\mbox{} \\
	&\quad \frac{
		\int
		\mathbf{1}_A(x^v_k)\,
		p^v(x_s,x_{s+1}^v)
		\prod_{m=s+1}^{k}
		g^v(x_m^v,Y_m^v)
		\prod_{w\in N(v)}
		\beta^w_{m}(x_{m},x_{m+1}^w)\,
		\psi^v(dx_m^v)
	}{
		\int
		p^v(x_s,x_{s+1}^v)
		\prod_{m=s+1}^{k}
		g^v(x_m^v,Y_m^v)
		\prod_{w\in N(v)}
		\beta^w_{m}(x_{m},x_{m+1}^w)\,
		\psi^v(dx_m^v)
	},
\end{align*}
where $\beta^w_{m}(x_{m},x_{m+1}^w)=1$ if $m=k$ and $w=v$, and
$\beta^w_{m}(x_{m},x_{m+1}^w)=p^w(x_{m},x_{m+1}^w)$ otherwise.
Note that the right hand side does not depend on $\omega$.
Thus whenever $x,z\in\bbS$ satisfy $x^{I\backslash\{j\}}=
z^{I\backslash\{j\}}$ for $j=(s+q+1,v)$, we can construct a coupling 
$Q^J_{x,z}$ using Lemma \ref{lem:markovminorize} such 
that $C_{ij}\le (1-\varepsilon^2\delta^2)^{s+q+1-k}$ for $i=(k,v)$ with
$k=s+1,\ldots,s+q$.

We have now constructed coupled updates $Q^J_{x,z}$ for every pair
$x,z\in\bbS$ that differ only at one point.  Collecting the above bounds 
on $C_{ij}$, we can estimate
\begin{align*}
	&\sum_{(k',v')\in I} e^{\beta\{|k-k'|+d(v,v')\}}C_{(k,v)(k',v')} 
	\\
	&\qquad\mbox{}
	\le
	2e^{\beta(q+r)}(1-\varepsilon^2)\Delta + 
	e^{\beta(q+2r)}(1-\varepsilon^{2(\Delta+1)})\Delta^2q 
	\\ &\qquad\qquad\mbox{}+
	e^{\beta(k-s)}(1-\varepsilon^2\delta^2)^{k-s} +
	e^{\beta(s+q+1-k)}(1-\varepsilon^2\delta^2)^{s+q+1-k}
	\\
	&\qquad\mbox{}
	\le
	3q\Delta^2
	e^{\beta(q+2r)}(1-\varepsilon^{2(\Delta+1)}) +
	e^{\beta}(1-\varepsilon^2\delta^2) +
	e^{\beta q}(1-\varepsilon^2\delta^2)^q =: c
\end{align*}
whenever $(k,v)\in J$.  In the last line, we have used that
$\alpha^{x+1}+\alpha^{q-x}$ is a convex function of $x\in [0,q-1]$, and 
therefore attains its maximum on the endpoints $x=0,q-1$.

Up to this point we have considered an arbitrary block 
$J=J_l^v\in\mathcal{J}$ with $1<l<\lceil n/q\rceil$.  It is however 
evident that the identical proof holds for the boundary blocks 
$l=1,\lceil n/q\rceil$, except that for $l=1$ we only need to consider 
Cases 3--5 above and for $l=\lceil n/q\rceil$ we only need to consider 
Cases 1--3 above.  As all the estimates are otherwise identical, the
corresponding bounds on $C_{ij}$ are at most as large as those in the 
case $1<l<\lceil n/q\rceil$.   Thus
$$
	\|C\|_{\infty,\beta m}:=
	\max_{i\in I}\sum_{j\in I} e^{\beta m(i,j)}C_{ij}\le c,
$$
where we define the metric $m(i,j)=|k-k'|+d(v,v')$ for
$(k,v)\in I$ and $(k',v')\in I$.

Our next order of business is to construct couplings $\hat Q_x^J$ of
$\gamma^J_x$ and $\tilde\gamma^J_x$ and to estimate the  
coefficients $b_i$.  To this end, let us first note that
$h_n^K(x,z^{\partial K})$ depends only on $x^{\partial^2K}$, where
$$
	\partial^2K := \bigcup_{w\in\partial K}N(w)\cap K
$$
is the subset of vertices in $K$ that can interact with vertices outside 
$K$ in two time steps.   It is easily seen that 
$\gamma^J_x=\tilde\gamma^J_x$, and that we can therefore choose $b_i=0$ 
for $i\in J$, unless $J=J_l^v$ with $v\in\partial^2K$ for some 
$K\in\mathcal{K}$.  In the latter case we obtain by Bayes' formula
\begin{align*}
	&\tilde\gamma_x^J(A) = \\
	&\frac{
		\int
		\mathbf{1}_A(x^J)
		\prod_{m=s}^{s+q}
		g^v(x_m^v,Y_m^v) \,
		h^K_{m+1}(x_m,x_{m+1}^{\partial K})
		\prod_{w\in N(v)\cap K\backslash\partial K}
		p^w(x_{m},x_{m+1}^w)\,
		\psi(dx^J)
	}{
		\int
		\prod_{m=s}^{s+q}
		g^v(x_m^v,Y_m^v) \,
		h^K_{m+1}(x_m,x_{m+1}^{\partial K})
		\prod_{w\in N(v)\cap K\backslash\partial K}
		p^w(x_{m},x_{m+1}^w)\,
		\psi(dx^J)
	}
\end{align*}
for $1<l<\lceil n/q\rceil$, where $s=(l-1)q$ and
$\psi(dx^J)=\bigotimes_{(k,v)\in J}\psi^v(dx_k^v)$.  Note that
$$
	\prod_{w\in N(v)\backslash(K\backslash\partial K)}p^w(x,z^w) 
	\ge
	\varepsilon^\Delta
	\prod_{w\in N(v)\backslash(K\backslash\partial K)}q^w(x^w,z^w),
$$
while
$$
	h^K_m(x,z^{\partial K}) \ge
	\varepsilon^\Delta
	\prod_{w\in N(v)\cap\partial K}q^w(x^w,z^w)
	\int \tilde\pi_{m-1}^\sigma(d\omega)
	\prod_{w\in\partial K\backslash N(v)}p^w(x^K\omega^{V\backslash 
	K},z^w).
$$
We can therefore estimate 
$\gamma_x^J(A)\ge \varepsilon^{2(q+1)\Delta}\Gamma(A)$
and
$\tilde\gamma_x^J(A)\ge \varepsilon^{2(q+1)\Delta}\Gamma(A)$ with
\begin{align*}
	&\Gamma(A)=\mbox{}\\
	&\frac{
		\int
		\mathbf{1}_A(x^J)
		\prod_{m=s}^{s+q}
		g^v(x_m^v,Y_m^v) \,
		\beta(x_m^v,x_{m+1}^v)
		\prod_{w\in N(v)\cap K\backslash\partial K}
		p^w(x_{m},x_{m+1}^w)\,
		\psi(dx^J)
	}{
		\int
		\prod_{m=s}^{s+q}
		g^v(x_m^v,Y_m^v) \,
		\beta(x_m^v,x_{m+1}^v)
		\prod_{w\in N(v)\cap K\backslash\partial K}
		p^w(x_{m},x_{m+1}^w)\,
		\psi(dx^J)
	},
\end{align*}
where $\beta(x,z)=q^v(x,z)$ if $v\in\partial K$ and
$\beta(x,z)=1$ if $v\in\partial^2K\backslash\partial K$.
Thus we can construct a coupling $\hat Q_x^J$ using
Lemma \ref{lem:minorize} such that $b_i\le 1-\varepsilon^{2(q+1)\Delta}$
for all $i\in J$ in the case $1<l<\lceil n/q\rceil$.
The same conclusion follows for $l=1,\lceil n/q\rceil$ by 
the identical proof.

We are now ready to put everything together.
As $\|\cdot\|_{\infty,\beta m}$ is a matrix norm, we have
$$
	\|D\|_{\infty,\beta m}\le \sum_{k=0}^\infty\|C\|_{\infty,\beta m}^k
	\le \frac{1}{1-c}<\infty.
$$
Thus $D<\infty$, to we can apply the comparison theorem.  Moreover,
$$
	\sup_{i\in J}\sum_{j\in J'}D_{ij} =
	\sup_{i\in J}e^{-\beta m(i,J')}
	\sum_{j\in J'}e^{\beta m(i,J')}D_{ij} \le
	e^{-\beta m(J,J')}\|D\|_{\infty,\beta m}.
$$
Thus we obtain
\begin{align*}
	\|\pi_n^\sigma-\tilde\pi_n^\sigma\|_J &\le
	2(1-\varepsilon^{2(q+1)\Delta})
	\sum_{i\in \{n\}\times J}
	\sum_{j\in\{1,\ldots,n\}\times\partial^2K} D_{ij} 
	\\ &\le
	\frac{2}{1-c}\,(1-\varepsilon^{2(q+1)\Delta})\,
	\card J\,e^{-\beta d(J,\partial^2K)}.
\end{align*}
But clearly $d(J,\partial^2K)\ge d(J,\partial K)-r$, and the proof
is complete.
\end{proof}

\begin{remark}
In the proof of Theorem \ref{thm:bias} (and similarly for Theorem 
\ref{thm:variance} below), we apply the comparison theorem with a
nonoverlapping cover $\{(l-1)q+1,\ldots,lq\wedge 
n\}$, $l\le\lceil n/q\rceil$. Working instead with overlapping blocks
$\{s+1,\ldots,s+q\}$, $s\le n-q$ would give somewhat better estimates
at the expense of even more tedious computations.
\end{remark}

\subsection{Bounding the variance}
\label{sec:variance}

We now turn to the problem of bounding the variance term 
$\tnorm{\tilde\pi_n^\sigma-\hat\pi_n^\sigma}_J$.  We will follow the basic 
approach taken in \cite{RvH13}, where a detailed discussion of the 
requisite ideas can be found.  In this section we develop the necessary 
changes to the proof in \cite{RvH13}.

At the heart of the proof of the variance bound lies a stability result 
for the block filter \cite[Proposition 4.15]{RvH13}.  This result must be 
modified in the present setting to account for the different assumptions 
on the spatial and temporal correlations.  This will be done next,
using the generalized comparison Theorem \ref{thm:main} much as in the 
proof of Theorem \ref{thm:bias}.

\begin{prop}
\label{prop:bfstab}
Suppose there exist $0<\varepsilon,\delta<1$ such that 
\begin{align*}
	\varepsilon q^v(x^v,z^v) &\le p^v(x,z^v)\le
	\varepsilon^{-1}q^v(x^v,z^v), \\
	\delta &\le q^v(x^v,z^v)\le \delta^{-1}
\end{align*}
for every $v\in V$ and $x,z\in\bbX$, where 
$q^v:\bbX^v\times\bbX^v\to\mathbb{R}_+$ is a transition density with
respect to $\psi^v$.  Suppose also that we can choose $q\in\mathbb{N}$ and 
$\beta>0$ such that
$$
	c:=
	3q\Delta^2e^{\beta q}(1-\varepsilon^{2(\Delta+1)}) +
	e^\beta(1-\varepsilon^2\delta^2) +
	e^{\beta q}(1-\varepsilon^2\delta^2)^q < 1.
$$
Then we have
$$
	\|\mathsf{\tilde F}_n\cdots\mathsf{\tilde F}_{s+1}\delta_\sigma-
	\mathsf{\tilde F}_n\cdots\mathsf{\tilde F}_{s+1}\delta_{\tilde\sigma}
	\|_J \le
	\frac{2}{1-c}\,
	\card J\,
	e^{-\beta (n-s)}
$$
for every $s<n$, $\sigma,\tilde\sigma\in\bbX$, $K\in\mathcal{K}$ and 
$J\subseteq K$.
\end{prop}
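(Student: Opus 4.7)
My plan is to follow the strategy of the proof of Theorem~\ref{thm:bias}, reformulating the stability of the block filter as a comparison between two random fields on space-time and applying Theorem~\ref{thm:main}. For each $\sigma'\in\bbX$, construct the trajectory measure $\mathbf{\tilde P}^{\sigma'}$ on $(X_{s+1},\ldots,X_n,Y_{s+1},\ldots,Y_n)$ exactly as in the proof of Theorem~\ref{thm:bias}: a time-inhomogeneous Markov chain initialized at $X_s=\sigma'$ with transition densities $\tilde p_k^{\sigma'}$ built from
$$h^{K,\sigma'}_k(x,z^{\partial K}) = \int \tilde\pi^{\sigma'}_{k-1}(d\omega) \prod_{v\in\partial K} p^v(x^K\omega^{V\setminus K}, z^v),$$
where $\tilde\pi^{\sigma'}_{k-1}:=\mathsf{\tilde F}_{k-1}\cdots\mathsf{\tilde F}_{s+1}\delta_{\sigma'}$, so that $\mathbf{\tilde P}^{\sigma'}[X_n\in\cdot\mid Y_{s+1},\ldots,Y_n]=\mathsf{\tilde F}_n\cdots\mathsf{\tilde F}_{s+1}\delta_{\sigma'}$. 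Letting $\rho$ and $\tilde\rho$ denote the conditional trajectory laws under $\mathbf{\tilde P}^\sigma$ and $\mathbf{\tilde P}^{\tilde\sigma}$ respectively (viewed as probability measures on $\bbS=\bbX^{n-s}$ indexed by $I=\{s+1,\ldots,n\}\times V$), the quantity to bound reduces to $\|\rho-\tilde\rho\|_{\{n\}\times J}$.

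Apply Theorem~\ref{thm:main} with weights $w_J\equiv 1$ and the cover $\mathcal{J}=\{J_l^v:1\le l\le\lceil(n-s)/q\rceil,\,v\in V\}$, $J_l^v=\{s+(l-1)q+1,\ldots,(s+lq)\wedge n\}\times\{v\}$. The key new choice, compared to the proof of Theorem~\ref{thm:bias}, is to equip $I$ with the \emph{purely temporal} pseudometric $m((k,v),(k',v'))=|k-k'|$; this is sufficient to certify the temporal decay $e^{-\beta(n-s)}$ and explains the absence of the $e^{2\beta r}$ spatial-range factor in $c$. The coupled updates $Q^J_{x,z}$ for spatial perturbations are produced by the same five-case minorization analysis as in the proof of Theorem~\ref{thm:bias}, via Lemmas~\ref{lem:minorize} and~\ref{lem:markovminorize}. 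The minorization carries through for the block-filter transitions because the pointwise bounds $\varepsilon q^v\le p^v\le\varepsilon^{-1}q^v$ integrate to $\varepsilon^{|\partial K|}\prod q^v\le h^{K,\sigma'}_k\le\varepsilon^{-|\partial K|}\prod q^v$, yielding the same contraction coefficients in Cases 1--5. Summing the contributions with the temporal metric---each perturbation site now picks up at most $e^{\beta q}$ instead of $e^{\beta(q+2r)}$---yields $\|C\|_{\infty,\beta m}\le c$, and therefore $\|D\|_{\infty,\beta m}\le 1/(1-c)$ by the same series argument as at the end of the proof of Theorem~\ref{thm:bias}.

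The main obstacle is the construction of the comparison couplings $\hat Q^J_x$ of $\gamma^J_x$ and $\tilde\gamma^J_x$ and the control of the coefficients $b_j$. In the proof of Theorem~\ref{thm:bias} one had $\gamma^J_x=\tilde\gamma^J_x$ outside the spatial boundary ($v\notin\partial^2 K$), making $b_j$ nonzero only for a few blocks; here the transitions $\tilde p_k^\sigma$ and $\tilde p_k^{\tilde\sigma}$ differ at every time $k>s$ because $h^{K,\sigma}_k$ and $h^{K,\tilde\sigma}_k$ depend on the distinct iterates $\tilde\pi^\sigma_{k-1}$ vs.\ $\tilde\pi^{\tilde\sigma}_{k-1}$. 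Both specifications nevertheless admit the common lower bound $\varepsilon^{2(q+1)\Delta}\Gamma^J_x$ (for a suitable $\Gamma^J_x$ constructed as in the analogous step of the proof of Theorem~\ref{thm:bias}), so Lemma~\ref{lem:minorize} produces a coupling with the uniform bound $b_j\le 1-\varepsilon^{2(q+1)\Delta}$. To extract the exponential decay $e^{-\beta(n-s)}$ from this uniform bound, I would proceed by induction on $n-s$: the inductive hypothesis applied to Proposition~\ref{prop:bfstab} at earlier times bounds the local differences $\|\tilde\pi^\sigma_{k-1}-\tilde\pi^{\tilde\sigma}_{k-1}\|_{\mathrm{loc}}$ that enter the $h^{K,\sigma}_k$ vs.\ $h^{K,\tilde\sigma}_k$ comparison, producing effective $b_j$ that decay exponentially in $k-s$. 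Combined with the weighted-norm estimate $\sum_{j\in J'}D_{ij}\le e^{-\beta m(i,J')}/(1-c)$ (derived exactly as at the end of the proof of Theorem~\ref{thm:bias}), this yields $\|\rho-\tilde\rho\|_{\{n\}\times J}\le\frac{2}{1-c}\,\card J\,e^{-\beta(n-s)}$ and closes the induction.
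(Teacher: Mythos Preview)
Your proposal correctly identifies the overall strategy (embed the block filter as a marginal of a space-time random field and apply Theorem~\ref{thm:main} with temporal blocks of length $q$), and your switch to the purely temporal pseudometric $m((k,v),(k',v'))=|k-k'|$ is exactly the reason the constant $c$ here lacks the spatial factor $e^{2\beta r}$ present in Theorem~\ref{thm:bias}. However, the construction you borrow from the proof of Theorem~\ref{thm:bias} is precisely what the paper avoids. As the paper states explicitly, in that construction \emph{all} local interactions depend on the initial condition through $h^{K,\sigma}_k$; when the two measures $\rho,\tilde\rho$ correspond to different initial points $\sigma,\tilde\sigma$, the specifications $\gamma^J_x$ and $\tilde\gamma^J_x$ differ at \emph{every} time step, yielding $b_j$ that are uniformly bounded away from zero across all of $I$. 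Your inductive repair---bound $b_j$ at time $k$ using the proposition at time $k-1$---does not close: each inductive step contributes a factor $\tfrac{1}{1-c}$ (from the $D$-sum) times constants from converting $\|\tilde\pi^\sigma_{k-1}-\tilde\pi^{\tilde\sigma}_{k-1}\|_{\mathrm{loc}}$ into a pointwise bound on $|h^{K,\sigma}_k-h^{K,\tilde\sigma}_k|$, so the coefficient in front of $e^{-\beta(n-s)}$ blows up rather than stabilizing at $\tfrac{2}{1-c}$.

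The paper sidesteps this entirely by using a different random field: the \emph{computation tree} from \cite{RvH13}. One unfolds the nested product
\[
\mathsf{B}^K\mathsf{\tilde F}_n\cdots\mathsf{\tilde F}_1\delta_\sigma
= \mathsf{C}_n^K\mathsf{P}^K\bigotimes_{K_{n-1}\in N(K)}\Big[\cdots
\mathsf{C}_1^{K_1}\mathsf{P}^{K_1}\bigotimes_{K_0\in N(K_1)}\delta_{\sigma^{K_0}}\Big]
\]
into a tree-indexed field whose \emph{leaves} carry the initial condition and whose internal transitions are the genuine $p^v$'s (not the $\sigma$-dependent $h^{K,\sigma}_k$'s). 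On this tree, the local updates $\gamma^J_x$ and $\tilde\gamma^J_x$ agree identically except at depth~$0$, so one can take $b_i\le\mathbf{1}_{d(i)=0}$. The exponential decay $e^{-\beta(n-s)}$ then drops out directly from $\sum_{j:d(j)=0}D_{ij}\le e^{-\beta n}/(1-c)$, with no induction and no constant inflation. The five-case coupling analysis you describe is carried out on the tree (with essentially the same minorization bounds), and the row-sum estimate for $\|C\|_{\infty,\beta m}$ matches the $c$ in the statement.
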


\begin{proof}
We fix throughout the proof $n>0$, $K\in\mathcal{K}$, and $J\subseteq K$.
We will also assume for notational simplicity that $s=0$.  As 
$\mathsf{\tilde F}_k$ differ for different $k$ only by their dependence on 
different observations $Y_k$, and as the conclusion of the Proposition is
independent of the observations, the conclusion for $s=0$ extends 
trivially to any $s<n$.

As in Theorem \ref{thm:bias}, the idea behind the proof is to introduce a 
Markov random field $\rho$ of which the block filter is a marginal, 
followed by an application of the generalized comparison theorem.  
Unfortunately, the construction in the proof of Theorem \ref{thm:bias} is 
not appropriate in the present setting, as there all the local 
interactions depend on the initial condition $\sigma$. That was irrelevant 
in Theorem \ref{thm:bias} where the initial condition was fixed, but is 
fatal in the present setting where we aim to understand a perturbation to 
the initial condition.  Instead, we will use a more elaborate construction 
of $\rho$ introduced in \cite{RvH13}, called the \emph{computation tree}. 
We begin by recalling this construction.

Define for $K'\in\mathcal{K}$ the block neighborhood
$N(K') := \{K''\in\mathcal{K}:d(K',K'')\le r\}$
(we recall that $\card N(K')\le\Delta_\mathcal{K}$).  We can evidently
write
$$
	\mathsf{B}^{K'}\mathsf{\tilde F}_s
	\bigotimes_{K''\in\mathcal{K}}\mu^{K''} =
	\mathsf{C}_s^{K'}\mathsf{P}^{K'}
	\bigotimes_{K''\in N(K')}\mu^{K''},
$$
where we define for any probability $\eta$ on $\bbX^{K'}$
$$
	(\mathsf{C}_s^{K'}\eta)(A) :=
	\frac{
		\int \mathbf{1}_A(x^{K'})\prod_{v\in K'}g^v(x^v,Y_s^v)\,
		\eta(dx^{K'})
	}{
		\int \prod_{v\in K'}g^v(x^v,Y_s^v)\,\eta(dx^{K'})
	},
$$
and for any probability $\eta$ on $\bbX^{\bigcup_{K''\in N(K')}K''}$
$$
	(\mathsf{P}^{K'}\eta)(A) :=
	\int \mathbf{1}_A(x^{K'})\prod_{v\in K'}p^v(z,x^v)\,\psi^v(dx^v)\,
	\eta(dz).
$$
Iterating this identity yields
\begin{multline*}
	\mathsf{B}^K\mathsf{\tilde F}_n\cdots\mathsf{\tilde F}_1\delta_\sigma
	= \\
	\mathsf{C}_n^K\mathsf{P}^K\bigotimes_{K_{n-1}\in N(K)}
	\bigg[
	\cdots\,
	\mathsf{C}_2^{K_2}\mathsf{P}^{K_2}
	\bigotimes_{K_1\in N(K_2)}
	\bigg[
	\mathsf{C}_1^{K_1}\mathsf{P}^{K_1}
	\bigotimes_{K_0\in N(K_1)}
	\delta_{\sigma^{K_0}}
	\bigg]
	\cdots
	\bigg].
\end{multline*}
The nested products can be naturally viewed as defining a tree.  

To formalize this idea, define the tree index set (we will 
write $K_n:=K$ for simplicity)
$$
	T := \{[K_u\cdots K_{n-1}]:0\le u<n,~K_s\in N(K_{s+1})
	\mbox{ for }u\le s<n\}\cup\{[\varnothing]\}.
$$
The root of the tree $[\varnothing]$ represents the block $K$ at time $n$, 
while $[K_u\cdots K_{n-1}]$ represents a duplicate of the block $K_u$ at 
time $u$ that affects the root along the branch $K_u\to K_{u+1}\to
\cdots\to K_{n-1}\to K$.  The set of sites corresponding to the 
computation tree is
$$
	I = \{[K_u\cdots K_{n-1}]v:[K_u\cdots K_{n-1}]\in T,~v\in K_u\}
	\cup\{[\varnothing]v:v\in K\},
$$
and the corresponding configuration space is $\bbS=\prod_{i\in I}\bbX^i$
with $\bbX^{[t]v}:=\bbX^v$.  
The following tree notation will be used throughout the proof.
Define for vertices of the 
tree $T$ the depth $d([K_u\cdots K_{n-1}]):=u$ and $d([\varnothing]):=n$.
For every site $[t]v\in I$, we define the associated vertex $v(i):=v$ and 
depth $d(i):=d([t])$.  Define also the sets $I_+:=\{i\in I:d(i)>0\}$
and $T_0:=\{[t]\in T:d([t])=0\}$ of non-leaf sites and leaf vertices, 
respectively.  Define
$$
	c([K_u\cdots K_{n-1}]v) :=
	\{[K_{u-1}\cdots K_{n-1}]v':K_{u-1}\in N(K_u),~v'\in N(v)\},
$$
and similarly for $c([\varnothing]v)$: that is, $c(i)$ is the set of 
children of the site $i\in I$ in the computation tree.  Finally, we will 
frequently identify a tree vertex $[K_u\cdots K_{n-1}]\in T$ with the 
corresponding set of sites $\{[K_u\cdots K_{n-1}]v:v\in K_u\}$, and 
analogously for $[\varnothing]$.

Having introduced the tree structure, we now define probability measures
$\rho,\tilde\rho$ on $\bbS$ by
\begin{align*}
	\rho(A) &= 
	\frac{
	\int \mathbf{1}_A(x)\prod_{i\in I_+}p^{v(i)}(x^{c(i)},x^i)\,
		g^{v(i)}(x^i,Y^i)\,
		\psi^{v(i)}(dx^i)
		\prod_{[t]\in T_0}\delta_{\sigma^{[t]}}(dx^{[t]})
	}{
	\int \prod_{i\in I_+}p^{v(i)}(x^{c(i)},x^i)\,
		g^{v(i)}(x^i,Y^i)\,
		\psi^{v(i)}(dx^i)
		\prod_{[t]\in T_0}\delta_{\sigma^{[t]}}(dx^{[t]})
	},\\
	\tilde\rho(A) &=
	\frac{
	\int \mathbf{1}_A(x)\prod_{i\in I_+}p^{v(i)}(x^{c(i)},x^i)\,
		g^{v(i)}(x^i,Y^i)\,
		\psi^{v(i)}(dx^i)
		\prod_{[t]\in T_0}\delta_{\tilde\sigma^{[t]}}(dx^{[t]})
	}{
	\int \prod_{i\in I_+}p^{v(i)}(x^{c(i)},x^i)\,
		g^{v(i)}(x^i,Y^i)\,
		\psi^{v(i)}(dx^i)
		\prod_{[t]\in T_0}\delta_{\tilde\sigma^{[t]}}(dx^{[t]})
	},
\end{align*}
where we write $\sigma^{[K_0\cdots K_{n-1}]}:=\sigma^{K_0}$ and 
$Y^i:=Y_{d(i)}^{v(i)}$ for simplicity.  Then, by construction, the measure
$\mathsf{B}^K\mathsf{\tilde F}_n\cdots\mathsf{\tilde F}_1\delta_\sigma$ 
coincides with the marginal of $\rho$ on the root of the computation tree, 
while $\mathsf{B}^K\mathsf{\tilde F}_n\cdots\mathsf{\tilde 
F}_1\delta_{\tilde\sigma}$ coincides with the marginal of $\tilde\rho$ 
on the root of the tree: this is easily seen by expanding the above nested 
product identity.  In particular, we obtain
$$
	\|\mathsf{\tilde F}_n\cdots\mathsf{\tilde F}_1\delta_\sigma-
	\mathsf{\tilde F}_n\cdots\mathsf{\tilde F}_1\delta_{\tilde\sigma}\|_J
	= \|\rho-\tilde\rho\|_{[\varnothing]J},
$$
and we aim to apply the comparison theorem to estimate this quantity.

The construction of the computation tree that we have just given is 
identical to the construction in \cite{RvH13}.  We deviate from the proof 
of \cite{RvH13} from this point onward, since we must use Theorem 
\ref{thm:main} instead of the classical Dobrushin comparison theorem to 
account for the distinction between temporal and spatial correlations in 
the present setting.

Fix $q\ge 1$. In analogy with the proof of Theorem \ref{thm:bias}, we 
consider a cover $\mathcal{J}$ consisting of blocks of sites $i\in I$ such 
that $(l-1)q<d(i)\le lq\wedge n$ and $v(i)=v$.  In the present setting, 
however, the same vertex $v$ is duplicated many times in the tree, so that 
we end up with many disconnected blocks of different lengths.  
To keep track of these blocks, define
$$
	I_0 := \{i\in I:d(i)=0\},\qquad
	I_l := \{i\in I:d(i)=(l-1)q+1\}
$$
for $1\le l\le \lceil n/q\rceil$, and let
$$
	\ell([K_u,\ldots,K_{n-1}]v) :=
	\max\{s\ge u:K_u=K_{u+1}=\cdots=K_s\}.
$$
We now define the cover $\mathcal{J}$ as
$$
	\mathcal{J} = \{J_l^i:0\le l\le \lceil n/q\rceil,~i\in I_l\},
$$
where
$$
	J_0^i := \{i\},\qquad
	J_l^i := \{[K_u\cdots K_{n-1}]v:
	(l-1)q+1\le u\le lq\wedge\ell(i)\}
$$
for $i=[K_{(l-1)q+1}\cdots K_{n-1}]v\in I_l$ and $1\le l\le \lceil 
n/q\rceil$.  It is easily seen that $\mathcal{J}$ is in fact a partition 
of of the computation tree $I$ into linear segments.

Having defined the cover $\mathcal{J}$, we must now consider a 
suitable coupled update rule. We will choose the natural local updates 
$\gamma^J_x(dz^J) = \rho(dz^J|x^{I\backslash J})$ and 
$\tilde\gamma^J_x(dz^J) = \tilde\rho(dz^J|x^{I\backslash J})$, with the 
coupled updates $Q_{x,z}^J$ and $\hat Q_x^J$ to be constructed below.  
Then Theorem \ref{thm:main} yields
$$
	\|\mathsf{\tilde F}_n\cdots\mathsf{\tilde F}_1\delta_\sigma-
	\mathsf{\tilde F}_n\cdots\mathsf{\tilde F}_1\delta_{\tilde\sigma}\|_J
	= \|\rho-\tilde\rho\|_{[\varnothing]J} \le
	2\sum_{i\in [\varnothing]J}\sum_{j\in I} D_{ij}\,b_j
$$
provided that $D=\sum_{k=0}^\infty C^k<\infty$ (cf.\ Corollary 
\ref{cor:uniq}), where
$$
	C_{ij} = \sup_{\substack{
				x,z\in\bbS:\\
				x^{I\backslash\{j\}}=z^{I\backslash\{j\}}
		}}
		\int \mathbf{1}_{\omega_i\ne\omega_i'}
		\,Q_{x,z}^{J(i)}(d\omega,d\omega'),\qquad
	b_i = \sup_{x\in\bbS}
	\int \mathbf{1}_{\omega_i\ne\omega_i'}
                \,\hat Q_{x}^{J(i)}(d\omega,d\omega'),
$$
and where we write $J(i)$ for the unique block $J\in\mathcal{J}$ that
contains $i\in I$.  To put this bound to good use, we must introduce 
coupled updates $Q_{x,z}^J$ and $\hat Q_x^J$ and
estimate $C_{ij}$ and $b_j$.

Let us fix until further notice a block $J=J_l^i\in\mathcal{J}$ with
$i=[K_{(l-1)q+1}\cdots K_{n-1}]v\in I_l$ and $1\le l\le\lceil   
n/q\rceil$. From the definition of $\rho$, we can compute explicitly 
\begin{align*}
	&\gamma^J_x(A) = \mbox{}\\
	&\frac{
	\int \mathbf{1}_A(x^J)\,p^v(x^{c(i)},x^i)
	\prod_{a\in I_+: J\cap c(a)\ne\varnothing}
	p^{v(a)}(x^{c(a)},x^a)
	\prod_{b\in J} g^v(x^{b},Y^b)\,
	\psi^v(dx^{b})
	}{
	\int p^v(x^{c(i)},x^i)
	\prod_{a\in I_+: J\cap c(a)\ne\varnothing}
	p^{v(a)}(x^{c(a)},x^a)
	\prod_{b\in J} g^v(x^{b},Y^b)\,
	\psi^v(dx^{b})
	}
\end{align*}
using the Bayes formula.  We now proceed to construct couplings 
$Q^J_{x,z}$ of $\gamma^J_x$ and $\gamma^J_z$ for $x,z\in\bbS$ that differ 
only at the site $j\in I$.  We distinguish the following cases:
\begin{enumerate}
\item $d(j)=(l-1)q$ and $v(j)\ne v$;
\item $d(j)=(l-1)q$ and $v(j)=v$;
\item $(l-1)q+1\le d(j)\le lq\wedge\ell(i)$ and $v(j)\ne v$;
\item $d(j)=lq\wedge\ell(i)+1$ and $v(j)\ne v$;
\item $d(j)=lq\wedge\ell(i)+1$ and $v(j)=v$.
\end{enumerate}
It is easily seen that $\gamma^J_x$ does not depend on $x^j$ except in
one of the above cases.  Thus when $j$ satisfies none of the above 
conditions, we can set $C_{aj}=0$ for $a\in J$.

\textbf{Case 1.}  In this case, we must have $j\in c(i)$ with
$v(j)\ne v$.  Note that
\begin{align*}
	&\gamma^J_x(A) \ge \mbox{}\\
	&\varepsilon^2\,
	\frac{
	\int \mathbf{1}_A(x^J)\,q^v(x^{i_-},x^i)
	\prod_{a\in I_+: J\cap c(a)\ne\varnothing}
	p^{v(a)}(x^{c(a)},x^a)
	\prod_{b\in J} g^v(x^{b},Y^b)\,
	\psi^v(dx^{b})
	}{
	\int q^v(x^{i_-},x^i)
	\prod_{a\in I_+: J\cap c(a)\ne\varnothing}
	p^{v(a)}(x^{c(a)},x^a)
	\prod_{b\in J} g^v(x^{b},Y^b)\,
	\psi^v(dx^{b})
	},
\end{align*}
where we define $i_-\in c(i)$ to be the (unique) child of $i$ such that
$v(i_-)=v(i)$.  As the right hand side does not depend on $x^j$, we can 
construct a coupling $Q^J_{x,z}$ using Lemma \ref{lem:minorize} such that 
$C_{aj}\le 1-\varepsilon^2$ for every $a\in J$ and $x,z\in\bbS$ such that
$x^{I\backslash\{j\}}=z^{I\backslash\{j\}}$.

\textbf{Case 2.} In this case we have $j=i_-$.  Let us write 
$J=\{i_1,\ldots,i_u\}$ where $u=\card J$ and $d(i_k)=(l-1)q+k$ for 
$k=1,\ldots,u$.  Thus $i_1=i$, and we define $i_0=i_-$.  Let us also
write $\tilde J_k = \{i_k,\ldots,i_u\}$.  Then we can define
the transition kernels on $\bbX^v$
\begin{align*}
	&P_{k,x}(\omega,A) = \mbox{}\\
	&\frac{
	\int \mathbf{1}_A(x^{i_k})\,
	p^v(\omega x^{c(i_k)\backslash i_{k-1}},x^{i_k})
	\prod_{\tilde J_k\cap c(a)\ne\varnothing}
	p^{v(a)}(x^{c(a)},x^a)
	\prod_{b\in \tilde J_k} g^v(x^{b},Y^b)\,
	\psi^v(dx^{b})
	}{
	\int 
	p^v(\omega x^{c(i_k)\backslash i_{k-1}},x^{i_k})
	\prod_{\tilde J_k\cap c(a)\ne\varnothing}
	p^{v(a)}(x^{c(a)},x^a)
	\prod_{b\in \tilde J_k} g^v(x^{b},Y^b)\,
	\psi^v(dx^{b})
	}
\end{align*}
for $k=1,\ldots,u$.  By construction, 
$P_{k,x}(x^{i_{k-1}},dx^{i_k})=\gamma^J_x(dx^{i_k}|x^{i_1},\ldots,x^{i_{k-1}})$,
so we are in the setting of Lemma \ref{lem:markovminorize}.
Moreover, we can estimate
$$
	P_{k,x}(\omega,A) \ge \varepsilon^2\delta^2\,
	\frac{
	\int \mathbf{1}_A(x^{i_k})
	\prod_{\tilde J_k\cap c(a)\ne\varnothing}
	p^{v(a)}(x^{c(a)},x^a)
	\prod_{b\in \tilde J_k} g^v(x^{b},Y^b)\,
	\psi^v(dx^{b})
	}{
	\int 
	\prod_{\tilde J_k\cap c(a)\ne\varnothing}
	p^{v(a)}(x^{c(a)},x^a)
	\prod_{b\in \tilde J_k} g^v(x^{b},Y^b)\,
	\psi^v(dx^{b})
	}.
$$
Thus whenever 
$x,z\in\bbS$ satisfy $x^{I\backslash\{j\}}=z^{I\backslash\{j\}}$, we can 
construct a coupling $Q_{x,z}^J$ using Lemma
\ref{lem:markovminorize} such that $C_{i_kj}\le 
(1-\varepsilon^2\delta^2)^k$ for every $k=1,\ldots,u$.

\textbf{Case 3.}  In this case we have $j\in \bigcup_{a\in 
I_+:J\cap c(a)\ne\varnothing}c(a)$ or $J\cap c(j)\ne\varnothing$,
with $v(j)\ne v$.  Let us note for future reference that there are
at most $q\Delta^2$ such sites $j$.  Now note that
\begin{align*}
	&\gamma^J_x(A) \ge  \varepsilon^{2(\Delta+1)}\times\mbox{}\\
	&\frac{
	\int \mathbf{1}_A(x^J)\,p^v(x^{c(i)},x^i)
	\prod_{a\in I_+: J\cap c(a)\ne\varnothing}
	\beta^{a}(x^{c(a)},x^a)
	\prod_{b\in J} g^v(x^{b},Y^b)\,
	\psi^v(dx^{b})
	}{
	\int p^v(x^{c(i)},x^i)
	\prod_{a\in I_+: J\cap c(a)\ne\varnothing}
	\beta^{a}(x^{c(a)},x^a)
	\prod_{b\in J} g^v(x^{b},Y^b)\,
	\psi^v(dx^{b})
	},
\end{align*}
where we have defined $\beta^a(x^{c(a)},x^a)=q^{v(a)}(x^{a_-},x^a)$ 
whenever $j=a$ or $j\in c(a)$, and
$\beta^a(x^{c(a)},x^a)=p^{v(a)}(x^{c(a)},x^a)$ otherwise.
The right hand side of this expression does not depend on $x^j$ as the 
terms $q^{v(a)}(x^{a_-},x^a)$ for $v(a)\ne v$ cancel in the numerator and
denominator. Thus whenever $x,z\in\bbS$ satisfy $x^{I\backslash\{j\}}=
z^{I\backslash\{j\}}$, we can construct a coupling $Q^J_{x,z}$ using
Lemma \ref{lem:minorize} such that $C_{aj}\le 1-\varepsilon^{2(\Delta+1)}$ 
for every $a\in J$.

\textbf{Case 4.}  In this case $J\cap c(j)\ne\varnothing$ with $v(j)\ne v$.
Note that
\begin{align*}
	&\gamma^J_x(A) \ge  \mbox{}\\
	&\varepsilon^2\,\frac{
	\int \mathbf{1}_A(x^J)\,p^v(x^{c(i)},x^i)
	\prod_{a\in I_+: J\cap c(a)\ne\varnothing}
	\beta^{a}(x^{c(a)},x^a)
	\prod_{b\in J} g^v(x^{b},Y^b)\,
	\psi^v(dx^{b})
	}{
	\int p^v(x^{c(i)},x^i)
	\prod_{a\in I_+: J\cap c(a)\ne\varnothing}
	\beta^{a}(x^{c(a)},x^a)
	\prod_{b\in J} g^v(x^{b},Y^b)\,
	\psi^v(dx^{b})
	},
\end{align*}
where $\beta^a(x^{c(a)},x^a)=q^{v(a)}(x^{a_-},x^a)$ 
when $j=a$, and $\beta^a(x^{c(a)},x^a)=p^{v(a)}(x^{c(a)},x^a)$ otherwise.
The right hand side does not depend on $x^j$ as the 
term $q^{v(j)}(x^{j_-},x^j)$ cancels in the numerator and
denominator. Thus whenever $x,z\in\bbS$ satisfy $x^{I\backslash\{j\}}=
z^{I\backslash\{j\}}$, we can construct a coupling $Q^J_{x,z}$ using
Lemma \ref{lem:minorize} such that $C_{aj}\le 1-\varepsilon^2$ 
for every $a\in J$.

\textbf{Case 5.} In this case we have $j_-\in J$.  Note that the existence 
of such $j$ necessarily implies that $\ell(i)>lq$ by the definition of 
$J$.  We can therefore write $J=\{i_1,\ldots,i_q\}$ where $d(i_k)=lq-k+1$ 
for $k=1,\ldots,q$, and we define $i_0=j$.  Let us also define the sets 
$\tilde J_k = \{i_k,\ldots,i_q\}$.  Then we can define the transition 
kernels on $\bbX^v$
\begin{align*}
	&P_{k,x}(\omega,A) = \mbox{}\\
	&\frac{
	\int \mathbf{1}_A(x^{i_k})\,
	p^v(x^{c(i_q)},x^{i_q})
	\prod_{a\in I_+:\tilde J_k\cap c(a)\ne\varnothing}
	\beta^{a}_\omega(x^{c(a)},x^a)
	\prod_{b\in \tilde J_k} g^v(x^{b},Y^b)\,
	\psi^v(dx^{b})
	}{
	\int 
	p^v(x^{c(i_q)},x^{i_q})
	\prod_{a\in I_+:\tilde J_k\cap c(a)\ne\varnothing}
	\beta^{a}_\omega(x^{c(a)},x^a)
	\prod_{b\in \tilde J_k} g^v(x^{b},Y^b)\,
	\psi^v(dx^{b})
	}
\end{align*}
for $k=1,\ldots,q$, where 
$\beta^a_\omega(x^{c(a)},x^a)=p^{v}(x^{c(a)},\omega)$ if $a=i_{k-1}$ and 
$\beta^a_\omega(x^{c(a)},x^a)=p^{v(a)}(x^{c(a)},x^a)$ otherwise. By 
construction
$P_{k,x}(x^{i_{k-1}},dx^{i_k})=\gamma^J_x(dx^{i_k}|x^{i_1},\ldots,x^{i_{k-1}})$, 
so we are in the setting of Lemma \ref{lem:markovminorize}. Moreover, we 
can estimate
\begin{align*}
	&P_{k,x}(\omega,A) \ge \varepsilon^2\delta^2\times \mbox{}\\
	&\frac{
	\int \mathbf{1}_A(x^{i_k})\,
	p^v(x^{c(i_q)},x^{i_q})
	\prod_{a\in I_+:\tilde J_k\cap c(a)\ne\varnothing}
	\beta^{a}(x^{c(a)},x^a)
	\prod_{b\in \tilde J_k} g^v(x^{b},Y^b)\,
	\psi^v(dx^{b})
	}{
	\int 
	p^v(x^{c(i_q)},x^{i_q})
	\prod_{a\in I_+:\tilde J_k\cap c(a)\ne\varnothing}
	\beta^{a}(x^{c(a)},x^a)
	\prod_{b\in \tilde J_k} g^v(x^{b},Y^b)\,
	\psi^v(dx^{b})
	},
\end{align*}
where $\beta^a(x^{c(a)},x^a)=1$ if $a=i_{k-1}$ 
and $\beta^a(x^{c(a)},x^a)=p^{v(a)}(x^{c(a)},x^a)$ otherwise.
Thus whenever 
$x,z\in\bbS$ satisfy $x^{I\backslash\{j\}}=z^{I\backslash\{j\}}$, we can 
construct a coupling $Q_{x,z}^J$ using Lemma
\ref{lem:markovminorize} such that $C_{i_kj}\le 
(1-\varepsilon^2\delta^2)^k$ for every $k=1,\ldots,q$.

We have now constructed coupled updates $Q^J_{x,z}$ for every pair 
$x,z\in\bbS$ that differ only at one point.  Collecting the above bounds 
on the matrix $C$, we can estimate
$$
	\sum_{j\in I}e^{\beta|d(a)-d(j)|}C_{aj} \le
	3q\Delta^2e^{\beta q}(1-\varepsilon^{2(\Delta+1)}) +
	e^\beta(1-\varepsilon^2\delta^2) +
	e^{\beta q}(1-\varepsilon^2\delta^2)^q =: c
$$
whenever $a\in J$, where we have used the convexity of the function
$\alpha^{x+1}+\alpha^{q-x}$.

Up to this point we have considered an arbitrary block 
$J=J_l^i\in\mathcal{J}$ with $1\le l\le\lceil n/q\rceil$.  However, in the 
remaining case $l=0$ it is easily seen that $\gamma^J_x=\delta_{\sigma^J}$ 
does not depend on $x$, so we can evidently set $C_{aj}=0$ for $a\in J$.  
Thus we have shown that
$$
	\|C\|_{\infty,\beta m} := \max_{i\in I}\sum_{j\in I}
	e^{\beta m(i,j)}C_{ij} \le c,
$$
where we define the pseudometric $m(i,j)=|d(i)-d(j)|$.  On the other hand, 
in the present setting it is evident that $\gamma^J_x=\tilde\gamma^J_x$ 
whenever $J=J_l^i\in\mathcal{J}$ with $1\le l\le\lceil n/q\rceil$.  We can 
therefore choose couplings $\hat Q_x^J$ such that
$b_i\le \mathbf{1}_{d(i)=0}$ for all $i\in I$.  Substituting into the
comparison theorem and arguing as in the proof of Theorem \ref{thm:bias}
yields the estimate
$$
	\|\mathsf{\tilde F}_n\cdots\mathsf{\tilde F}_1\delta_\sigma-
	\mathsf{\tilde F}_n\cdots\mathsf{\tilde F}_1\delta_{\tilde\sigma}\|_J
	\le
	\frac{2}{1-c}\card J\,e^{-\beta n}.
$$
Thus the proof is complete.
\end{proof}

Proposition \ref{prop:bfstab} provides control of the block filter as a 
function of time but not as a function of the initial conditions.  The 
dependence on the initial conditions can however be incorporated \emph{a 
posteriori} as in the proof of \cite[Proposition 4.17]{RvH13}.  This 
yields the following result, which forms the basis for the proof of 
Theorem \ref{thm:variance} below.

\begin{cor}[\rm Block filter stability]
\label{cor:bfstab}
Suppose there exist $0<\varepsilon,\delta<1$ such that 
\begin{align*}
	\varepsilon q^v(x^v,z^v) &\le p^v(x,z^v)\le
	\varepsilon^{-1}q^v(x^v,z^v), \\
	\delta &\le q^v(x^v,z^v)\le \delta^{-1}
\end{align*}
for every $v\in V$ and $x,z\in\bbX$, where 
$q^v:\bbX^v\times\bbX^v\to\mathbb{R}_+$ is a transition density with
respect to $\psi^v$.  Suppose also that we can choose $q\in\mathbb{N}$ and 
$\beta>0$ such that
$$
	c:=
	3q\Delta^2e^{\beta q}(1-\varepsilon^{2(\Delta+1)}) +
	e^\beta(1-\varepsilon^2\delta^2) +
	e^{\beta q}(1-\varepsilon^2\delta^2)^q < 1.
$$
Let $\mu$ and $\nu$ be (possibly random) probability measures on
$\bbX$ of the form
$$
	\mu = \bigotimes_{K\in\mathcal{K}}\mu^K,\qquad\qquad
	\nu = \bigotimes_{K\in\mathcal{K}}\nu^K.
$$
Then we have
$$
	\|\mathsf{\tilde F}_n\cdots\mathsf{\tilde F}_{s+1}\mu-
	\mathsf{\tilde F}_n\cdots\mathsf{\tilde F}_{s+1}\nu
	\|_J \le
	\frac{2}{1-c}\,
	\card J\,
	e^{-\beta (n-s)},
$$
as well as
\begin{multline*}
	\mathbf{E}[
	\|\mathsf{\tilde F}_n\cdots\mathsf{\tilde F}_{s+1}\mu-
	\mathsf{\tilde F}_n\cdots\mathsf{\tilde F}_{s+1}\nu
	\|_J^2]^{1/2} \\
	\mbox{}\le
	\frac{2}{1-c}\,
	\frac{1}{(\varepsilon\delta)^{2|\mathcal{K}|_\infty}}\,
	\card J\,
	(e^{-\beta}\Delta_\mathcal{K})^{n-s}
	\max_{K\in\mathcal{K}}
	\mathbf{E}[\|\mu^K-\nu^K\|^2]^{1/2},
\end{multline*}
for every $s<n$, $K\in\mathcal{K}$ and $J\subseteq K$.
\end{cor}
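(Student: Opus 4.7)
The plan is to extend the computation-tree construction from the proof of Proposition \ref{prop:bfstab} so that the leaves carry product-measure marginals instead of Dirac measures. Concretely, define random measures $\rho_\mu$ and $\rho_\nu$ on $\bbS$ exactly as $\rho$ and $\tilde\rho$ in that proof, but with $\delta_{\sigma^{[t]}}$ and $\delta_{\tilde\sigma^{[t]}}$ replaced by $\mu^{K_0([t])}$ and $\nu^{K_0([t])}$ at each leaf $[t]\in T_0$. Expanding the nested product identity shows that $\mathsf{B}^K\mathsf{\tilde F}_n\cdots\mathsf{\tilde F}_{s+1}\mu$ is the marginal of $\rho_\mu$ at the root $[\varnothing]$, and analogously for $\nu$, so that $\|\mathsf{\tilde F}_n\cdots\mathsf{\tilde F}_{s+1}\mu-\mathsf{\tilde F}_n\cdots\mathsf{\tilde F}_{s+1}\nu\|_J=\|\rho_\mu-\rho_\nu\|_{[\varnothing]J}$, and it suffices to bound the right-hand side via Theorem \ref{thm:main}.

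For the first (deterministic) inequality I would use the same cover $\mathcal{J}$ and the same coupled updates $Q^J_{x,z}$ as in the proof of Proposition \ref{prop:bfstab} for non-leaf blocks: they depend only on the dynamics and observations, not on the leaf measures, so the estimate $\|C\|_{\infty,\beta m}\le c$ carries over unchanged. For leaf blocks $J_0^i$, the local updates $\gamma^J_x$ are just the appropriate marginal of $\mu^{K_0([t])}$ (and $\tilde\gamma^J_x$ of $\nu^{K_0([t])}$) independently of the rest of the configuration, so one can set $C_{aj}=0$ for $a$ in a leaf block and trivially bound $b_i\le 1$. The weighted-matrix-norm argument at the end of the proof of Proposition \ref{prop:bfstab} then reproduces $\|\rho_\mu-\rho_\nu\|_{[\varnothing]J}\le \tfrac{2}{1-c}\card J\,e^{-\beta(n-s)}$ verbatim.

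For the $L^2$ inequality, replace the singleton leaf blocks by the full duplicates $\{[t]v:v\in K_0([t])\}$ in the cover; the non-leaf part of the analysis, and hence the constant $c$, is unaffected. The key new ingredient is a sharper coupling $\hat Q_x^J$ at the leaves. By Bayes' formula, $\gamma^J_x$ has density with respect to the prior $\mu^{K_0}$ proportional to the product of at most $|\mathcal{K}|_\infty$ transition densities $p^{v(a)}(x^{c(a)},x^a)$ linking the leaf block to its children in the tree, and similarly $\tilde\gamma^J_x$ has density with respect to $\nu^{K_0}$ of the same form. The two-sided bounds $\varepsilon q^v\le p^v\le\varepsilon^{-1}q^v$ and $\delta\le q^v\le\delta^{-1}$ sandwich these densities between $(\varepsilon\delta)^{\pm 2|\mathcal{K}|_\infty}$ times the respective priors, and Lemma \ref{lem:minorize} then furnishes a coupling with $b_i\le(\varepsilon\delta)^{-2|\mathcal{K}|_\infty}\|\mu^{K_0([t])}-\nu^{K_0([t])}\|$ at every site in the leaf block.

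Inserting these bounds into Theorem \ref{thm:main}, the computation tree has at most $\Delta_\mathcal{K}^{n-s}$ leaf blocks (branching factor $\Delta_\mathcal{K}$ over $n-s$ levels), and $D_{ij}\le e^{-\beta(n-s)}\|D\|_{\infty,\beta m}\le e^{-\beta(n-s)}/(1-c)$ whenever $i$ is at the root and $j$ at a leaf, producing the pointwise estimate $\tfrac{2}{1-c}(\varepsilon\delta)^{-2|\mathcal{K}|_\infty}\card J\,(e^{-\beta}\Delta_\mathcal{K})^{n-s}\max_K\|\mu^K-\nu^K\|$; because this is linear in $\|\mu^K-\nu^K\|$, Minkowski's inequality lifts it to the claimed bound on $\mathbf{E}[\|\cdot\|_J^2]^{1/2}$. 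The chief technical obstacle will be the density-ratio computation at the enlarged leaf blocks: one must use Bayes' formula carefully to isolate the prior $\mu^{K_0}$ from the rest of the tree dynamics and identify precisely which and how many transition densities contribute the $(\varepsilon\delta)^{\pm 2|\mathcal{K}|_\infty}$ factor, since it is this factor that controls the dimension dependence of the $L^2$ bound through the block size $|\mathcal{K}|_\infty$.
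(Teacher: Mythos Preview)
Your argument for the first inequality contains an error: with product-measure leaves and singleton leaf blocks $J_0^i$, the local update $\gamma^J_x$ \emph{does} depend on $x$. By Bayes' formula it is the conditional of $\mu^{K_0}$ at the single site $v(i)$ given the remaining leaf sites $x^{[t]\setminus\{i\}}$, tilted by the transition factors $p^{v(a)}(x^{c(a)},x^a)$ for level-$1$ sites $a$ with $i\in c(a)$. The first ingredient depends on the other leaf sites in the same block through the (arbitrary, uncontrolled) dependence structure of $\mu^{K_0}$; the second depends on level-$1$ sites and on sibling leaf blocks sharing the same parent. Hence the leaf rows of $C$ are not zero and can be of order $|\mathcal K|_\infty$, so $\|C\|_{\infty,\beta m}\le c$ may fail and the comparison theorem cannot be invoked. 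The same difficulty recurs in your second argument: even with full leaf blocks, $\gamma^J_x$ still depends on $x$ at sibling leaf blocks through the shared level-$1$ transition factors $p^{v(a)}(x^{c(a)},x^a)$ (since $c(a)$ straddles several leaf blocks), so the leaf rows of $C$ are again nonzero and your claim that the constant $c$ is unaffected is not justified.

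The adaptation of \cite[Proposition~4.17]{RvH13} that the paper invokes avoids re-running the comparison theorem. One represents the product-leaf computation-tree measure as a weighted mixture
\[
\rho_\mu(f)=\frac{\int\rho_{x^0}(f)\,N(x^0)\,\tilde\mu(dx^0)}{\int N(x^0)\,\tilde\mu(dx^0)},
\qquad \tilde\mu=\bigotimes_{[t]\in T_0}\mu^{K_0([t])},
\]
of the Dirac-leaf measures $\rho_{x^0}$ from Proposition~\ref{prop:bfstab}, where $N(x^0)$ is the normalizer. The first inequality is then immediate: both $\rho_\mu(f)$ and $\rho_\nu(f)$ lie in the convex hull of $\{\rho_\sigma(f):\sigma\in\bbS\}$, whose diameter Proposition~\ref{prop:bfstab} already bounds. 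For the second inequality, telescope over leaf blocks, switching $\mu^{K_0([t])}$ to $\nu^{K_0([t])}$ one at a time. At each step the $x^{[t]}$-dependence of the integrand enters only through the at most $|\mathcal K|_\infty$ factors $p^{v(a)}$ for $a$ in the unique parent block of $[t]$, so the normalizer has sup/inf ratio at most $(\varepsilon\delta)^{-2|\mathcal K|_\infty}$, while the oscillation of the root marginal as a function of $x^{[t]}$ is controlled by the first inequality just established. Summing the at most $\Delta_{\mathcal K}^{n-s}$ contributions and lifting to $L^2$ by Minkowski gives the claim.
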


\begin{proof}
The proof is a direct adaptation of \cite[Proposition 4.17]{RvH13}.
\end{proof}

The block filter stability result in \cite{RvH13} is the only place in the 
proof of the variance bound where the inadequacy of the classical 
comparison theorem plays a role.  Having exploited the generalized 
comparison Theorem \ref{thm:main} to extend the stability results in 
\cite{RvH13} to the present setting, we would therefore expect that the 
remainder of the proof of the variance bound follows verbatim from 
\cite{RvH13}.  Unfortunately, however, there is a complication: the result 
of Corollary \ref{cor:bfstab} is not as powerful as the corresponding 
result in \cite{RvH13}.  Note that the first (uniform) bound in Corollary 
\ref{cor:bfstab} decays exponentially in time $n$, but the second (initial 
condition dependent) bound only decays in $n$ if it happens to be the case 
that $e^{-\beta}\Delta_\mathcal{K}<1$.  As in \cite{RvH13} both the 
spatial and temporal interactions were assumed to be sufficiently weak, we 
could assume that the latter was always the case.  In the present setting, 
however, it is possible that $e^{-\beta}\Delta_\mathcal{K}\ge 1$ no matter 
how weak are the spatial correlations.  

To surmount this problem, we will use a slightly different error 
decomposition than was used in \cite{RvH13} to complete the proof of the 
variance bound.  The present approach is inspired by \cite{DG01}.  The 
price we pay is that the variance bound scales in the number of samples as 
$N^{-\gamma}$ where $\gamma$ may be less than the optimal (by the central 
limit theorem) rate $\frac{1}{2}$.  It is likely that a more sophisticated 
method of proof would yield the optimal $N^{\frac{1}{2}}$ rate in the 
variance bound.  However, let us note that in order to put the block 
particle filter to good use we must optimize over the size of the blocks 
in $\mathcal{K}$, and optimizing the error bound in Theorem 
\ref{thm:block} yields at best a rate of order $N^{-\alpha}$ for some 
constant $\alpha$ depending on the constants $\beta_1,\beta_2$.  As the 
proof of Theorem \ref{thm:block} is not expected to yield realistic values 
for the constants $\beta_1,\beta_2$, the suboptimality of the variance 
rate $\gamma$ does not significantly alter the practical conclusions that 
can be drawn from Theorem \ref{thm:block}.

We now proceed to the variance bound.  The following is
the main result of this section.

\begin{theorem}[\rm Variance term]
\label{thm:variance}
Suppose there exist $0<\varepsilon,\delta,\kappa<1$ such that 
\begin{align*}
	\varepsilon q^v(x^v,z^v) &\le p^v(x,z^v)\le
	\varepsilon^{-1}q^v(x^v,z^v), \\
	\delta &\le q^v(x^v,z^v)\le \delta^{-1}, \\
	\kappa &\le g^v(x^v,y^v)\le \kappa^{-1}
\end{align*}
for every $v\in V$, $x,z\in\bbX$, and $y\in\bbY$, where 
$q^v:\bbX^v\times\bbX^v\to\mathbb{R}_+$ is a transition density with
respect to $\psi^v$.  Suppose also that we can choose $q\in\mathbb{N}$ and 
$\beta>0$ such that
$$
	c:=
	3q\Delta^2e^{\beta q}(1-\varepsilon^{2(\Delta+1)}) +
	e^\beta(1-\varepsilon^2\delta^2) +
	e^{\beta q}(1-\varepsilon^2\delta^2)^q < 1.
$$
Then for every $n\ge 0$, $\sigma\in\bbX$, $K\in\mathcal{K}$ and
$J\subseteq K$, the following hold:
\begin{enumerate}
\item If $e^{-\beta}\Delta_\mathcal{K}<1$, we have
$$
	\tnorm{\tilde\pi_n^\sigma-\hat\pi_n^\sigma}_J \le
	\card J\,
	\frac{32\Delta_\mathcal{K}}{1-c}\,
	\frac{2-e^{-\beta}\Delta_\mathcal{K}}{
	1-e^{-\beta}\Delta_\mathcal{K}}\,
	\frac{(\varepsilon\delta\kappa^{\Delta_\mathcal{K}})^{-4|\mathcal{K}|_\infty}
	}{N^{\frac{1}{2}}}.
$$
\item If $e^{-\beta}\Delta_\mathcal{K}=1$, we have
$$
	\tnorm{\tilde\pi_n^\sigma-\hat\pi_n^\sigma}_J \le
	\card J\,
	\frac{16\beta^{-1}\Delta_\mathcal{K}}{1-c}\,
	(\varepsilon\delta\kappa^{\Delta_\mathcal{K}})^{-4|\mathcal{K}|_\infty}
	\,\frac{3+\log N}{N^{\frac{1}{2}}}.
$$
\item If $e^{-\beta}\Delta_\mathcal{K}>1$, we have
$$
	\tnorm{\tilde\pi_n^\sigma-\hat\pi_n^\sigma}_J \le
	\card J\,
	\frac{32\Delta_\mathcal{K}}{1-c}
	\,
	\bigg\{
	\frac{1}{
	e^{-\beta}\Delta_\mathcal{K}-1} + 2\bigg\}
	\,
	\frac{
	(\varepsilon\delta\kappa^{\Delta_\mathcal{K}})^{-4|\mathcal{K}|_\infty}
	}{N^{\frac{\beta}{2\log\Delta_\mathcal{K}}}}.
$$
\end{enumerate}
\end{theorem}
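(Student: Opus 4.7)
The plan is to follow the template of \cite{RvH13} for controlling the variance of the block particle filter via a telescoping decomposition in time, but with a modified decomposition (inspired by \cite{DG01}) that exploits both estimates of Corollary \ref{cor:bfstab} simultaneously. This modification is needed because, unlike in \cite{RvH13}, the temporal stability factor $\alpha := e^{-\beta}\Delta_\mathcal{K}$ in the second bound of Corollary \ref{cor:bfstab} is no longer guaranteed to be less than one.

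The starting point is the telescoping identity
\[
	\tilde\pi_n^\sigma - \hat\pi_n^\sigma =
	\sum_{k=1}^{n} \mathsf{\tilde F}_n\cdots\mathsf{\tilde F}_{k+1}
	\bigl(\mathsf{\tilde F}_k\hat\pi_{k-1}^\sigma - \hat\pi_k^\sigma\bigr),
\]
with the convention $\hat\pi_0^\sigma := \delta_\sigma$ and the operator product taken to be the identity when $k=n$. The two measures inside the parentheses, $\mathsf{C}_k\mathsf{B}\mathsf{P}\hat\pi_{k-1}^\sigma$ and $\mathsf{C}_k\mathsf{B}\mathsf{S}^N\mathsf{P}\hat\pi_{k-1}^\sigma$, are both product measures across blocks (conditional on $\hat\pi_{k-1}^\sigma$) and differ only through the i.i.d.\ sampling step $\mathsf{S}^N$. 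A standard conditional Hoeffding-type estimate, together with the lower bound $g^v\ge\kappa$ needed to control the renormalization in $\mathsf{C}_k$, yields the block-wise $L^2$ bound
\[
	\max_{K\in\mathcal{K}}\mathbf{E}\bigl[\|\mathsf{B}^K\mathsf{\tilde F}_k\hat\pi_{k-1}^\sigma
	- \mathsf{B}^K\hat\pi_k^\sigma\|^2\bigr]^{1/2} \le
	\frac{C\,\kappa^{-2|\mathcal{K}|_\infty}}{\sqrt{N}}
\]
for an absolute constant $C$; this estimate is essentially identical to the one used in \cite{RvH13}.

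To propagate these one-step errors to time $n$, fix a cutoff $m\in\{1,\ldots,n\}$. For terms with $n-k\ge m$, apply the \emph{uniform} bound in Corollary \ref{cor:bfstab}, which holds almost surely and depends neither on the perturbation size nor on $\alpha$: the cumulative contribution is at most $\tfrac{2|J|}{(1-c)(1-e^{-\beta})}\,e^{-\beta m}$. For the remaining terms $n-k<m$, apply the \emph{initial-condition dependent} bound in Corollary \ref{cor:bfstab} to the pair of product measures $\mathsf{\tilde F}_k\hat\pi_{k-1}^\sigma$ and $\hat\pi_k^\sigma$, combined with the one-step sampling estimate above: the cumulative contribution is at most $C'\,|J|\,(\varepsilon\delta\kappa^{\Delta_\mathcal{K}})^{-4|\mathcal{K}|_\infty}\,S_m(\alpha)/\sqrt{N}$, where $S_m(\alpha):=\sum_{j=0}^{m-1}\alpha^j$ and $C'$ is another absolute constant.

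The three cases of the theorem now arise by optimizing the cutoff $m$. When $\alpha<1$, $S_m(\alpha)\le 1/(1-\alpha)$ is uniformly bounded, so taking $m=n$ yields the rate $N^{-1/2}$ directly with constant proportional to $(2-\alpha)/(1-\alpha)$. When $\alpha=1$, $S_m(\alpha)=m$, and balancing $e^{-\beta m}\sim m/\sqrt{N}$ by setting $m\sim(\log N)/\beta$ produces the rate $(\log N)/\sqrt{N}$. When $\alpha>1$, $S_m(\alpha)\le\alpha^m/(\alpha-1)$, and balancing $e^{-\beta m}\sim\alpha^m/\sqrt{N}$ by choosing $m=\tfrac{\log N}{2\log\Delta_\mathcal{K}}$ (using $\log\alpha=\log\Delta_\mathcal{K}-\beta$) yields the rate $N^{-\beta/(2\log\Delta_\mathcal{K})}$. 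The principal obstacle is conceptual: recognizing that the adaptive cutoff is required to interpolate between the two stability bounds. Once the decomposition is set up, the remaining work—tracking the correct exponents of $\kappa$, $\varepsilon$, $\delta$, and $|\mathcal{K}|_\infty$ in the constants, and justifying the use of Corollary \ref{cor:bfstab} with $\mu = \mathsf{\tilde F}_k\hat\pi_{k-1}^\sigma$ and $\nu = \hat\pi_k^\sigma$—follows the corresponding arguments in \cite{RvH13} essentially verbatim.
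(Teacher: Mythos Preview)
Your proposal is correct and follows essentially the same approach as the paper: a telescoping decomposition combined with the two stability estimates of Corollary~\ref{cor:bfstab} and the one-step sampling bounds of Proposition~\ref{prop:onestep}, with a cutoff $m$ (the paper calls it $t$) optimized according to whether $\alpha=e^{-\beta}\Delta_\mathcal{K}$ is below, equal to, or above $1$. The only cosmetic difference is that the paper truncates the telescoping sum after $t$ steps and bounds the single remainder $\tnorm{\mathsf{\tilde F}_n\cdots\mathsf{\tilde F}_{n-t+1}\tilde\pi_{n-t}^\sigma-\mathsf{\tilde F}_n\cdots\mathsf{\tilde F}_{n-t+1}\hat\pi_{n-t}^\sigma}_J$ directly via the uniform estimate, whereas you telescope all the way and sum the uniform bounds over the old terms; this produces an extra harmless factor $(1-e^{-\beta})^{-1}$ but does not affect the rates or the structure of the argument.
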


The proof of Theorem \ref{thm:variance} combines the stability bounds of 
Corollary \ref{cor:bfstab} and one-step bounds on the sampling error, 
\cite[Lemma 4.19 and Proposition 4.22]{RvH13}, that can be 
used verbatim in the present setting.  We recall the latter here for the 
reader's convenience.

\begin{prop}[\rm Sampling error]
\label{prop:onestep}
Suppose there exist $0<\varepsilon,\delta,\kappa<1$ such that 
\begin{align*}
	\varepsilon q^v(x^v,z^v) &\le p^v(x,z^v)\le
	\varepsilon^{-1}q^v(x^v,z^v), \\
	\delta &\le q^v(x^v,z^v)\le \delta^{-1}, \\
	\kappa &\le g^v(x^v,y^v)\le \kappa^{-1}
\end{align*}
for every $v\in V$, $x,z\in\bbX$, and $y\in\bbY$.  Then we have
$$
	\max_{K\in\mathcal{K}}\tnorm{\mathsf{\tilde F}_n\hat\pi_{n-1}^\sigma-
	\mathsf{\hat F}_n\hat\pi_{n-1}^\sigma}_K \le
	\frac{2\kappa^{-2|\mathcal{K}|_\infty}}{N^{\frac{1}{2}}}
$$
and
$$
	\max_{K\in\mathcal{K}}\mathbf{E}[
	\|\mathsf{\tilde F}_{s+1}\mathsf{\tilde F}_{s}
	\hat\pi_{s-1}^\sigma-
	\mathsf{\tilde F}_{s+1}\mathsf{\hat F}_s
	\hat\pi_{s-1}^\sigma\|_K^2]^{1/2}
	\le
	\frac{16\Delta_\mathcal{K}
	(\varepsilon\delta)^{-2|\mathcal{K}|_\infty}
	\kappa^{-4|\mathcal{K}|_\infty\Delta_\mathcal{K}}}{N^{\frac{1}{2}}}
$$
for every $0<s<n$ and $\sigma\in\bbX$.
\end{prop}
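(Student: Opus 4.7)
The plan is to prove both bounds by peeling off each Bayes correction using the classical ratio estimate and reducing to the $L^2$-control of the Monte Carlo empirical measure, which has variance $\|\phi\|_\infty^2/N$ for any bounded $\phi$ once the input measure is held fixed. The core tool throughout is that, for probability measures $\rho_1,\rho_2$ on $\bbX^K$ and any bounded $\phi$,
\[
|\mathsf{C}_k^K\rho_1(\phi) - \mathsf{C}_k^K\rho_2(\phi)|
\le \kappa^{-|K|}\bigl[|(\rho_1-\rho_2)(g^K\phi)| + \|\phi\|_\infty|(\rho_1-\rho_2)(g^K)|\bigr],
\]
where $g^K=\prod_{v\in K}g^v(\cdot,Y_k^v)$ satisfies $\kappa^{|K|}\le g^K\le\kappa^{-|K|}$, combined with the conditional variance identity $\mathbf{E}[|(\mathsf{S}^N\rho-\rho)(\phi)|^2\mid\rho]\le\|\phi\|_\infty^2/N$ that follows from the i.i.d.\ structure of the sampling step.

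For the first bound, I would first observe that because $g$ and $p$ factorize across vertices, $\mathsf{B}^K\mathsf{C}_n\mathsf{B}\rho = \mathsf{C}_n^K\mathsf{B}^K\rho$, i.e.\ the block-wise correction commutes with projection onto the $K$-marginal. For a $K$-local $f$ with $|f|\le 1$, applying the Bayes ratio estimate with $\rho_1 = \mathsf{B}^K\mathsf{P}\hat\pi_{n-1}^\sigma$ and $\rho_2 = \mathsf{B}^K\mathsf{S}^N\mathsf{P}\hat\pi_{n-1}^\sigma$ reduces the problem to $L^2$-controlling $(\mathsf{S}^N\mathsf{P}\hat\pi_{n-1}^\sigma-\mathsf{P}\hat\pi_{n-1}^\sigma)(\phi)$ for $K$-local $\phi\in\{g^K f,g^K\}$ with $\|\phi\|_\infty\le\kappa^{-|K|}$. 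Conditioning on $\hat\pi_{n-1}^\sigma$ makes the samples genuinely i.i.d.\ from $\mathsf{P}\hat\pi_{n-1}^\sigma$, so the variance identity yields the Monte Carlo factor $1/\sqrt N$ per term, and taking the sup over $f$ gives the stated $2\kappa^{-2|\mathcal{K}|_\infty}/\sqrt N$.

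For the second bound, the key structural input is that both $\mu = \mathsf{\tilde F}_s\hat\pi_{s-1}^\sigma$ and $\nu = \mathsf{\hat F}_s\hat\pi_{s-1}^\sigma$ are product measures across $\mathcal{K}$, since blocking is followed by the block-factorizing update $\mathsf{C}_s$. An outer application of the Bayes ratio trick on $\mathsf{C}_{s+1}^K$ reduces the target to $L^2$-bounding $(\mathsf{P}\mu-\mathsf{P}\nu)(h)$ for $K$-local $h\in\{g^K f,g^K\}$. Because $\mathsf{P}\mu(h)$ depends only on $\{\mu^{K'}:K'\in N(K)\}$ when $h$ is $K$-local, I would telescope across the at most $\Delta_\mathcal{K}$ neighboring blocks via hybrid products $\rho_\ell = \bigotimes_{i\le\ell}\nu^{K_i}\otimes\bigotimes_{i>\ell}\mu^{K_i}$, writing the difference as $\sum_\ell (\mu^{K_\ell}-\nu^{K_\ell})(H_\ell)$ with $H_\ell$ a $K_\ell$-local function whose sup norm is controlled by $\|h\|_\infty$ together with the uniform lower bounds $p^v\ge\varepsilon q^v\ge\varepsilon\delta$ used to replace intractable products of $p^v$ by tractable ones; this is where the factor $(\varepsilon\delta)^{-2|\mathcal{K}|_\infty}$ enters. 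A second Bayes ratio estimate, now on $\mathsf{C}_s^{K_\ell}$, together with the Monte Carlo control of $(\mathsf{S}^N\mathsf{P}\hat\pi_{s-1}^\sigma-\mathsf{P}\hat\pi_{s-1}^\sigma)$, disposes of each telescope term; tracking the accumulated $\kappa^{-|K|}$ from the outer step, the $\kappa^{-|K_\ell|}$ from the inner step applied at each of the $\Delta_\mathcal{K}$ blocks, and the worst-case amplification of the test-function norms through the nested Bayes ratios produces the stated constant $16\Delta_\mathcal{K}(\varepsilon\delta)^{-2|\mathcal{K}|_\infty}\kappa^{-4|\mathcal{K}|_\infty\Delta_\mathcal{K}}/\sqrt N$.

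The main obstacle will be the careful $L^2$ bookkeeping through the nested Bayes ratios. The normalizers $\rho_i(g^K)$ and the intermediate test functions $H_\ell$ are all random, depending on the observations $Y_s,Y_{s+1}$ and on the \emph{other} blocks' measures $\mu^{K_i},\nu^{K_i}$, which are themselves jointly dependent because they share the same $N$ samples drawn in $\mathsf{S}^N\mathsf{P}\hat\pi_{s-1}^\sigma$. The key device is to apply every Bayes ratio estimate pointwise in $\omega$, retaining only sup-norm control on $H_\ell$ before any $L^2$-expectation is taken; the variance identity then needs to be invoked only at the innermost level, where the conditional i.i.d.\ structure of the empirical measure given $\hat\pi_{s-1}^\sigma$ makes it applicable without further decoupling. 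A secondary subtlety is that the ratios $p^v/q^v$ must be used to bound the propagated test functions $H_\ell$ uniformly in the neighboring configurations, which is precisely the role of the assumption $p^v\ge\varepsilon q^v$ with $q^v\ge\delta$ and which is invisible in the analogous analysis in \cite{RvH13} where $p^v$ is directly lower-bounded.
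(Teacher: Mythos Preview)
Your plan is correct and reconstructs precisely the argument of \cite[Lemma~4.19 and Proposition~4.22]{RvH13}: the Bayes ratio estimate, the conditional i.i.d.\ variance bound for $\mathsf{S}^N$, and (for the second inequality) the telescoping across the $\Delta_\mathcal{K}$ neighboring blocks using the product structure of $\mu$ and $\nu$. The paper's own proof is a single line: it observes that the present hypotheses imply $\varepsilon\delta\le p^v(x,z^v)\le(\varepsilon\delta)^{-1}$ and then cites \cite[Lemma~4.19 and Proposition~4.22]{RvH13} verbatim with $\varepsilon$ replaced by $\varepsilon\delta$. So your ``secondary subtlety'' about passing through $q^v$ is not actually needed as a separate mechanism---the cruder bound $p^v\ge\varepsilon\delta$ already reduces the present statement to the earlier one, and no new idea enters beyond this substitution. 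Your approach is more self-contained; the paper's is shorter because the work was already done in \cite{RvH13}.
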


\begin{proof}
Immediate from \cite[Lemma 4.19 and Proposition 4.22]{RvH13} upon 
replacing $\varepsilon$ by $\varepsilon\delta$.
\end{proof}

We can now prove Theorem \ref{thm:variance}

\begin{proof}[Proof of Theorem \ref{thm:variance}]
We fix for the time being an integer $t\ge 1$ (we will 
optimize over $t$ at the end of the proof).
We argue differently when $n\le t$ and when $n>t$.

Suppose first that $n\le t$.  In this case, we estimate
\begin{align*}
	\tnorm{\tilde\pi_n^\sigma-\hat\pi_n^\sigma}_J &=
	\tnorm{\mathsf{\tilde F}_n\cdots\mathsf{\tilde F}_1\delta_\sigma-
	\mathsf{\hat F}_n\cdots\mathsf{\hat F}_1\delta_\sigma}_J \\ 
	&\le
	\sum_{k=1}^n
	\tnorm{\mathsf{\tilde F}_n\cdots\mathsf{\tilde F}_{k+1}
	\mathsf{\tilde F}_{k}\hat\pi_{k-1}^\sigma -
	\mathsf{\tilde F}_n\cdots\mathsf{\tilde F}_{k+1}
	\mathsf{\hat F}_{k}\hat\pi_{k-1}^\sigma}_J
\end{align*}
using a telescoping sum and the triangle inequality.
The term $k=n$ in the sum is estimated by the first bound in Proposition 
\ref{prop:onestep}, while the remaining terms are estimated by the second 
bound of Corollary \ref{cor:bfstab} and Proposition 
\ref{prop:onestep}, respectively.  This yields
$$
	\tnorm{\tilde\pi_n^\sigma-\hat\pi_n^\sigma}_J \le
	\card J\,
	\frac{32\Delta_\mathcal{K}}{1-c}\,
	\frac{(\varepsilon\delta\kappa^{\Delta_\mathcal{K}})^{-4|\mathcal{K}|_\infty}
	}{N^{\frac{1}{2}}}
	\bigg\{
	\frac{(e^{-\beta}\Delta_\mathcal{K})^{n-1}-1}{
	e^{-\beta}\Delta_\mathcal{K}-1}+1\bigg\}
$$
(in the case $e^{-\beta}\Delta_\mathcal{K}=1$, the quantity between the
brackets $\{\mbox{}\cdot\mbox{}\}$ equals $n$).

Now suppose that $n>t$.  Then we decompose the error as
\begin{align*}
	\tnorm{\tilde\pi_n^\sigma-\hat\pi_n^\sigma}_J &\le
	\tnorm{
	\mathsf{\tilde F}_n\cdots\mathsf{\tilde F}_{n-t+1}\tilde\pi_{n-t}^\sigma -
	\mathsf{\tilde F}_n\cdots\mathsf{\tilde F}_{n-t+1}\hat\pi_{n-t}^\sigma
	}_J \\ &\qquad\mbox{}
	+ \sum_{k=n-t+1}^n
	\tnorm{\mathsf{\tilde F}_n\cdots\mathsf{\tilde F}_{k+1}
	\mathsf{\tilde F}_{k}\hat\pi_{k-1}^\sigma -
	\mathsf{\tilde F}_n\cdots\mathsf{\tilde F}_{k+1}
	\mathsf{\hat F}_{k}\hat\pi_{k-1}^\sigma}_J,
\end{align*}
that is, we develop the telescoping sum for $t$ steps only.
The first term is estimated by the first bound in Corollary 
\ref{cor:bfstab}, while the sum is estimated as in the case $n\le t$.
This yields
$$
	\tnorm{\tilde\pi_n^\sigma-\hat\pi_n^\sigma}_J \le
	\frac{\card J}{1-c}\bigg[
	2e^{-\beta t} +
	\frac{
	32\Delta_\mathcal{K}
	(\varepsilon\delta\kappa^{\Delta_\mathcal{K}})^{-4|\mathcal{K}|_\infty}
	}{N^{\frac{1}{2}}}
	\bigg\{
	\frac{(e^{-\beta}\Delta_\mathcal{K})^{t-1}-1}{
	e^{-\beta}\Delta_\mathcal{K}-1}+1\bigg\}\bigg]
$$
(in the case $e^{-\beta}\Delta_\mathcal{K}=1$, the quantity between the
brackets $\{\mbox{}\cdot\mbox{}\}$ equals $t$).

We now consider separately the three cases in the statement of the 
Theorem.

\textbf{Case 1.} In this case we choose $t=n$, and note that
$$
	\frac{(e^{-\beta}\Delta_\mathcal{K})^{n-1}-1}{
	e^{-\beta}\Delta_\mathcal{K}-1}+1 \le
	\frac{2-e^{-\beta}\Delta_\mathcal{K}}{
	1-e^{-\beta}\Delta_\mathcal{K}}\quad\mbox{for all }n\ge 1.
$$
Thus the result follows from the first bound above.

\textbf{Case 2.} In this case we have
$$
	\tnorm{\tilde\pi_n^\sigma-\hat\pi_n^\sigma}_J \le
	\frac{\card J}{1-c}\bigg[
	2e^{-\beta t} +
	\frac{
	32\Delta_\mathcal{K}
	(\varepsilon\delta\kappa^{\Delta_\mathcal{K}})^{-4|\mathcal{K}|_\infty}
	}{N^{\frac{1}{2}}}\,
	t\bigg]
$$
for all $t,n\ge 1$.  Now choose $t=\lceil (2\beta)^{-1}\log N\rceil$.  Then
$$
	\tnorm{\tilde\pi_n^\sigma-\hat\pi_n^\sigma}_J \le
	\frac{\card J}{1-c}\bigg[
	16\beta^{-1}\Delta_\mathcal{K}
	(\varepsilon\delta\kappa^{\Delta_\mathcal{K}})^{-4|\mathcal{K}|_\infty}
	\,\frac{\log N}{N^{\frac{1}{2}}}
	+\frac{
	34\Delta_\mathcal{K}
	(\varepsilon\delta\kappa^{\Delta_\mathcal{K}})^{-4|\mathcal{K}|_\infty}
	}{N^{\frac{1}{2}}}
	\bigg],
$$
which readily yields the desired bound.

\textbf{Case 3.}  In this case we have
$$
	\tnorm{\tilde\pi_n^\sigma-\hat\pi_n^\sigma}_J \le
	\frac{\card J}{1-c}\bigg[
	2e^{-\beta t} +
	\frac{
	32\Delta_\mathcal{K}
	(\varepsilon\delta\kappa^{\Delta_\mathcal{K}})^{-4|\mathcal{K}|_\infty}
	}{N^{\frac{1}{2}}}
	\bigg\{
	\frac{(e^{-\beta}\Delta_\mathcal{K})^{t-1}-1}{
	e^{-\beta}\Delta_\mathcal{K}-1}+1\bigg\}\bigg]
$$
for all $t,n\ge 1$.  Now choose $t=\Big\lceil \frac{\log 
N}{2\log\Delta_\mathcal{K}}\Big\rceil$.  Then
$$
	\tnorm{\tilde\pi_n^\sigma-\hat\pi_n^\sigma}_J \le
	\card J\,
	\frac{32\Delta_\mathcal{K}}{1-c}
	\,
	\bigg\{
	\frac{1}{
	e^{-\beta}\Delta_\mathcal{K}-1} + 2\bigg\}
	\,
	\frac{
	(\varepsilon\delta\kappa^{\Delta_\mathcal{K}})^{-4|\mathcal{K}|_\infty}
	}{N^{\frac{\beta}{2\log\Delta_\mathcal{K}}}},
$$
and the proof is complete.
\end{proof}

The conclusion of Theorem \ref{thm:block} now follows readily from 
Theorems \ref{thm:bias} and \ref{thm:variance}.  We must only check
that the assumptions Theorems \ref{thm:bias} and \ref{thm:variance} are 
satisfied.  The assumption of Theorem \ref{thm:bias} is slightly stronger 
than that of Theorem \ref{thm:variance}, so it suffices to consider the 
former.  To this end, fix $0<\delta<1$, and choose $q\in\mathbb{N}$ 
such that
$$
	1-\delta^2 + (1-\delta^2)^q < 1.
$$
Then we may evidently choose $0<\varepsilon_0<1$, depending on $\delta$ 
and $\Delta$ only, such that
$$
	3q\Delta^2(1-\varepsilon^{2(\Delta+1)})+
	1-\varepsilon^2\delta^2 +
	(1-\varepsilon^2\delta^2)^q < 1
$$
for all $\varepsilon_0<\varepsilon\le 1$.  This is the constant 
$\varepsilon_0$ that appears in the statement of Theorem \ref{thm:block}.
Finally, it is now clear that we can choose $\beta>0$ sufficiently close 
to zero (depending on $\delta,\varepsilon,r,\Delta$ only) such that $c<1$.
Thus the proof of Theorem \ref{thm:block} is complete.

%\bibliographystyle{imsart-number}
%\bibliography{ref}

\end{document}